\definecolor{blue_links}{RGB}{13,0,180} 
\numberwithin{equation}{section}
\theoremstyle{plain}
\theoremstyle{plain}
\newtheorem{theorem}{Theorem}[section]
\newtheorem{corollary}[theorem]{Corollary}
\newtheorem{proposition}[theorem]{Proposition}
\newtheorem{lemma}[theorem]{Lemma}
\theoremstyle{definition}
\newtheorem{definition}[theorem]{Definition}
\theoremstyle{remark}
\newtheorem{remark}[theorem]{Remark}
\theoremstyle{definition}
\theoremstyle{remark}
\mathchardef\emptyset="001F
\newcommand{\stress}{\boldsymbol{\sigma}} 
\newcommand{\strain}{\boldsymbol{\varepsilon}} 
\newcommand{\R}{\mathbb{R}}
\newcommand{\E}{\mathcal{E}}
\newcommand{\C}{\mathbb{C}}
\newcommand{\e}{\mathrm{E}}
\newcommand{\di}{\mathrm{d}}
\newcommand{\A}{\mathcal{A}}
\newcommand{\HH}{\mathcal{H}}
\newcommand{\Om}{\Omega}
\newcommand{\M}{\mathbb{M}}
\newcommand{\F}{\mathcal{F}}
\newcommand{\J}{\mathcal{J}}
\newcommand{\p}{\boldsymbol{p}}
\newcommand{\D}{\mathcal{D}}
\newcommand{\VV}{\mathcal{V}}
\newcommand{\argmin}{\mathrm{argmin}}
\newcommand{\pp}{\boldsymbol{P}}
\newcommand{\brho}{\boldsymbol{\rho}}
\newcommand{\bpi}{\boldsymbol{\pi}}
\newcommand{\tu}{\tilde{u}}
\newcommand{\tstrain}{\tilde{\strain}}
\newcommand{\tp}{\tilde{\p}}
\newcommand{\bxi}{\boldsymbol{\xi}}
\newcommand{\btheta}{\boldsymbol{\theta}}
\definecolor{dred}{rgb}{.8,0,0}
\definecolor{ddmagenta}{rgb}{0.7,0,0.9}
\definecolor{ddcyan}{rgb}{0,0.2,1.0}
\definecolor{Orchid}{rgb}{0.7,0.4,0}
\newcommand{\BV}{\mathrm{BV}}
\newcommand{\coloneq }{\hspace{1pt}\raisebox{0.74pt}{\scalebox{0.8}{:}}\hspace{-2.2pt}=}
\newcommand{\om}{\omega}
\newcommand{\ha}{\mathbb{H}}
\newcommand{\q}{\boldsymbol{\mathrm{Q}}}
\newcommand{\qq}{\boldsymbol{q}}
\newcommand{\eeta}{\boldsymbol{\eta}}
\newcommand{\trho}{\tilde{\brho}}
\newcommand{\tpi}{\tilde{\bpi}}
\title{Topology optimization for quasistatic elastoplasticity}
\author[S. Almi]{Stefano Almi}
\address[Stefano Almi]{Faculty of Mathematics, University of Vienna, 
Oskar-Morgenstern-Platz 1, 1090 Wien, Austria.}
\email{stefano.almi@univie.ac.at}
\urladdr{http://www.mat.univie.ac.at/$\sim$almi}
\author[U. Stefanelli]{Ulisse Stefanelli} 
\address[Ulisse Stefanelli]{Faculty of Mathematics, University of
  Vienna, Oskar-Morgenstern-Platz 1, A-1090 Vienna, Austria,
Vienna Research Platform on Accelerating
  Photoreaction Discovery, University of Vienna, W\"ahringerstra\ss e 17, 1090 Wien, Austria,
 \& Istituto di
  Matematica Applicata e Tecnologie Informatiche {\it E. Magenes}, via
  Ferrata 1, I-27100 Pavia, Italy
}
\email{ulisse.stefanelli@univie.ac.at}
\urladdr{http://www.mat.univie.ac.at/$\sim$stefanelli}
\date{\today}
 \subjclass[2010]{74C05, 	%Small-strain, rate-independent theories of plasticity
 			   74P10,  %Optimization of other properties in solid mechanics
			   49Q10,  	%Optimization of shapes other than minimal surfaces
			   49J20, 	%Existence theories for optimal control problems involving partial differential 						equations
			   49K20,  	%Optimality conditions for problems involving partial differential equations
			   }
\begin{document}
\maketitle

\begin{abstract}
 
Topology optimization is concerned with the identification of optimal 
shapes of deformable bodies with respect to given target functionals. 
The focus of this paper is on a topology optimization problem for a
time-evolving elastoplastic medium under kinematic hardening. We adopt a phase-field approach and argue by subsequent
approximations, first by discretizing time and then by regularizing
the flow rule. Existence of optimal shapes is proved both at the
time-discrete and time-continuous level, independently of the
regularization. First order optimality conditions are firstly obtained
in the regularized time-discrete setting and then proved to pass to
the nonregularized time-continuous limit.  
The phase-field approximation is shown to pass to its sharp-interface
limit via an evolutive variational convergence argument.
\end{abstract}

\section{Introduction}
\label{s.introduction} 

The design of a mechanical piece is often driven by an optimization
process. The mechanical response of a given shape is tested against a number
of criteria, possibly including weight, material and manufacturing
costs, topological, and geometrical features. The tenet of Topology
Optimization (TO in the following) is that of identifying the optimal
shape of a body $E\subset \Omega$ within a given design region $\Omega \subset \R^n$ with
respect to a given target functional. This optimality depends on the
mechanical response of the body with respect to  the imposed actions
(boundary displacements, forces, tractions) and is hence a function of
$E$ itself. As such, the target functional is minimized with respect
to the shape $E$. This general setting is common to most TO problems
and arises ubiquitously, from mechanical engineering, to
aerospace and automotive, to architectural engineering, to
biomechanics \cite{Bensdsoe}.

In this paper, we investigate a TO problem for a linearized elastoplastic
medium showing kinematic hardening. The mechanical state of the  the system is described by its
time-dependent 
{\it displacement} $u(x,t) \in
 \R^n$    and its {\it plastic strain} $\p(x,t) \in \R^{n \times n}_{\rm
  dev}$ (symmetric deviatoric tensors). We assume that the {\it total strain} $ \e u
= (\nabla u + \nabla u^T)/2$ of the body can be additively decomposed into an
elastic part $\strain \in \R^{n \times n}_{\rm
  sym}$ (symmetric tensors) related to the stress state of the
material and the plastic part $\p$,
namely,
\begin{equation}\label{response}
  \e u = {\strain} + \p.
  \end{equation}

The actual position of the body within the design domain $\Omega$ is identified
by means of the scalar function $z \colon \Omega \to [0,1]$. In particular,
the level set $\{z=1\}$ indicates the position of the body to be determined via
TO.
In the following, we shall interpret $z$ as a phase indicator and
assume the region not occupied by the body to be filled by a very compliant
medium, again of elastoplastic type. 
This approach is rather classical
\cite{Allaire2002} and allows for a sound mathematical treatment. In
particular, by taking the material parameters to be  suitably dependent on
$z$, all state quantities will be assumed to be defined in the whole
design region
$\Omega$. The mechanical problem will be hence addressed in the fixed
domain $\Omega$ and the actual position of the body to be determined
via TO is identified via $z$. In the following, we refer to $z$ as
{\it phase field} or {\it phase}, alluding to the interpretation of the material in
$\Omega$ as a two-phase system.

We assume linear material response, namely, the stress $\stress$ of the medium is obtained as
$\stress = {\mathbb C}(z) {\strain }$ where ${\mathbb C}(z)$ is the positive-definite symmetric
{\it elasticity tensor}. On the other hand, the
time-evolution of $\p$ is driven by the normality {\it flow rule}
\begin{equation}\label{flow}
  d(z)\partial |\dot \p| \ni \stress - {\mathbb H}(z)\p\,.
  \end{equation}
Here, $d(z)>0$ represents the {\it yield stress} which activates
plasticization and the symbol~$\partial $ stands for the set-valued subdifferential in
the sense of Convex Analysis, namely $\partial|\dot \p| = \dot \p/|\dot
\p|$ for $\dot \p \not = { 0}$ and $\partial | 0| =
\{\qq \in \R^{n\times n}_{\rm dev} \  :  \ |\qq|\leq 1\}$.  Eventually, ${\mathbb H}(z)\p$  represents the {\it
  backstress} due to kinematic hardening, here modulated by the positive-definite symmetric {\it kinematic hardening tensor}
${\mathbb H}(z)$~\cite{Han}. The flow rule~\eqref{flow} is of course
to be complemented by an initial condition for~$\p$ which we will take
as $\p(0)=0$ for simplicity. 

The body is assumed to be clamped on the portion~$\Gamma_{D}$ of
the boundary $\partial \Omega$  and to evolve quasistatically under
the combined effect of
the time-dependent body force $\ell(z) f$ and of the time-dependent boundary traction $g$ on the portion
$\Gamma_{N}$ of $\partial \Omega$. The {\it quasistatic equilibrium system}
hence reads 
\begin{equation}
  \label{quasistatic}
  \nabla \cdot \boldsymbol{\sigma} + \ell(z) f(t) = 0 \ \ \text{in} \ \Omega,  \quad u =0 \ \
  \text{on} \ \Gamma_{\rm D}, \quad \stress n = g(t) \ \ \text{on} \
  \Gamma_{\rm N}
\end{equation}
where $n$ indicates the outward pointing normal to $\partial
\Omega$ and the term $\ell(z)$ corresponds to the {\it density} of the
medium at phase $z$.  Under suitable assumptions on data, see Section
\ref{s.setting} below, for each $z \in L^\infty(\Omega)$ one can
uniquely identify a trajectory $t \in [0,T] \mapsto (u(t),\p(t))$
solving the {\it quasistatic elastoplastic evolution system}
\eqref{response}--\eqref{quasistatic} in a suitable weak sense, see
Definition \ref{d.quasistatic} and the comments thereafter.

Our aim is to identify phases $z$ which, together with their
associated quasistatic elastoplastic evolutions  $t \in [0,T] \mapsto
(u(t),\p(t))$, minimize the 
compliance-type functional ${\mathcal J_\delta}(z,u)$ given by
\begin{align}\label{e.target}
\J_\delta(z, u) := &\int_{\Om} \ell(z) \,  f(T) {\, \cdot\,} u(T) \, \di x + \int_{\Gamma_{N}} g(T) {\, \cdot\,} u(T) \, \di \HH^{n-1} 
\\
&\nonumber
-  \int_{0}^{T} \int_{\Om} \ell(z) \, \dot{f}( \tau) {\, \cdot\,} u (\tau) \, \di x \,\di \tau  - \int_{0}^{T} \int_{\Gamma_{N}} \dot{g}(\tau) {\, \cdot\, } u(\tau) \, \di \HH^{n-1} \di \tau 
\\
&
+ \int_{\Om} \frac{\delta}{2} | \nabla{z}|^{2} + \frac{z^{2}(1-z)^{2}}{2\delta} \, \di x \,.\nonumber
\end{align}
The first four terms in~$\J_\delta$ measure the compliance of the medium,
integrated over the reference time interval. The last two terms in~$\J_\delta$ are the classical Modica-Mortola functional~\cite{MR0445362}. Under the modulation of the user-defined small parameter
$\delta>0$, the gradient term penalizes changes in~$z$ whereas the
double-well term favours the values~$0$ and~$1$. The combination of the
two last terms in~$\J_\delta$ expresses the competition between phase
separation and minimization of transitions between
phases. In the limit $\delta\to 0$ one recovers a sharp-interface
situation, where minimizing phases $z$ take exclusively values~$0$ or~$1$ and the length of the interface separating the two regions~$\{z=0\}$ and $\{z=1\}$ is penalized, see Section \ref{s.gamma}.

Our main TO problem reads
\begin{equation}
  \label{TOmain}
  \min\big\{{\mathcal J}_\delta(z,u) \ : \  (u,\p) \ \ \text{solve
      \eqref{response}--\eqref{quasistatic} given $z$}\big\}\,.
\end{equation}
The main contribution of this paper is in proving that this TO problem
admits solutions, in investigating
its discretization and regularization, and in providing first-order
optimality conditions.

More precisely, in order to tackle the TO problem \eqref{TOmain} we proceed by subsequent
approximations. At first, we investigate a time-discrete version of
\eqref{TOmain}, where continuous-in-time states are replaced by the
time-discrete solutions of the incremental elastoplastic problem, see
Definition \ref{d.discrete-evolution}. The time-discrete TO problem is
proved to admit solutions (Proposition \ref{p.2}) which converge to solutions of the
time-continuous \eqref{TOmain} as the fineness of the time partition
goes to $0$ (Corollary~\ref{c.1}). 

The time-discrete TO problem is then regularized by replacing
the nonsmooth term $|\dot \p|$ in the flow rule \eqref{flow} by the
smooth function $h_\gamma(\dot \p) = (|\dot
\p|^2+\gamma^{-2})^{1/2}-1/\gamma$ depending on $\gamma>0$. The
corresponding approximate time-discrete TO problem admits solutions
(Proposition \ref{p.4}) which converge to solutions of the
time-discrete TO problem as $\gamma \to +\infty$ (Corollary
\ref{c.2}). 
Introducing the regularization via $h_\gamma$ is instrumental to obtain
the differentiability of the {\it control-to-state} map $z \mapsto (u,\p )$ which is in turn
needed in order to derive first-order optimality conditions, see also~\cite{Alm-Ste_20,delosReyes, Herzog2, Wachsmut2}. This
differentiability is tackled in Section \ref{s.diff} in the frame of
the approximate time-discrete TO problem (Theorem \ref{t.1}) and
allows to prove corresponding first-order optimality conditions
(Corollary~\ref{c.approx-optimality}). The passage to the limit as
$\gamma \to +\infty$ first and then as the fineness of the time
partition goes to $0$ provide the first-order optimality conditions
for the time-discrete TO problem (Theorem \ref{t.discrete-optimality})
and the time-continuous TO problem~\eqref{TOmain} (Theorem~\ref{t.continuous-optimality}), which are the main results of the
paper.

All the above mentioned results are obtained in the setting of the
phase-field approximation $\delta >0$. Still, the existence and the
convergence results are valid in the sharp-interface case $\delta=0$
as well and the limit $\delta \to 0$ can be rigorously ascertained. We
give some detail in this direction in Section \ref{s.gamma} for the
time-continuous TO problem \eqref{TOmain}. In particular, we prove
that solutions to \eqref{TOmain} for $\delta>0$ converge to solutions
to \eqref{TOmain} for $\delta=0$ as $\delta \to 0$ by means of an
evolutive $\Gamma$-convergence argument (Proposition
\ref{p.11}). Let us remark however that, due to the
limited regularity of solutions to \eqref{TOmain} for $\delta=0$, first-order optimality
conditions are available for the case $\delta>0$ only.

 Before moving on, let us comment on the literature and put our work in perspective. 
The mathematical TO literature in the static {\it elastic} setting is abundant, see~\cite{Bourdin,Burger,Penzler} and 
\cite{Blank2,Blank1,Carraturo} for a selection of existence results
and first-order optimality conditions in different linear and
nonlinear settings. Results in the {\it elastoplastic} setting are
available in the two-dimensional case, both in the static~\cite{haslinger1, haslinger2, Hlavacek1, Hlavacek3}  and in the
evolutive regime~\cite{Hlavacek2},  but exclusively  under the a priori assumption that
the unknown optimal shape $\{z=1\}$ is Lipschitz regular. 
The beam structure
 and frame optimization was investigated in~\cite{Karkauskas,
   Khanzadi, Pedersen} from the point of view of the existence of
 minimizers. First-order optimality conditions in terms of shape
 derivatives appeared in~\cite[Chapters~4.8 and 4.9]{Sokolowski2} for
 an elastic torsion problem and for the viscoplastic model of
 Perzyna, see also~\cite{Boissier, Maury}. To the best of our
 knowledge, the existence analysis and the study of optimality
 conditions in the corresponding regularity setting are unprecedented
 for quasistatic evolution TO problems for elasto-plasticity.

 On the
 other hand, control problem for quasistatic elastoplasticity have
 already been studied and the reader is referred to the 
 analysis in \cite{Wachsmut1,Wachsmut2,Wachsmut3}, see also the
 general theory in \cite{Rindler,Rindler2}. Compared
 with these contributions, where controls usually are modeled as imposed
 forces, in the frame of TO the action of controls is more involved,
 for they modify the elastic response via material parameters. Correspondingly, our analysis is
 at specific places more involved than that in the above papers, albeit being inspired by the same
 general principles.

 In  our recent paper
\cite{Alm-Ste_20}, we have tackled the three-dimensional
{\it static} kinematic-hardening case and analyzed the existence of
solutions, the
first-order optimality conditions, and the sharp-interface limit.
This indeed sets the basis for the current contribution, which however
focuses on the quasistatic evolutive case. Moving from static state-problem
formulations, based on the minimization of one single functional, to
evolutive formulations, based on the time-continuous limits of
sequences of time-discretizations in the frame of rate-independent
processes \cite{Mielke-Roubicek}  is analytically
challenging. Remarkably, in order to tackle the various limiting
procedures one
has to resort to {\it evolutive} $\Gamma$-convergence
techniques \cite{mrs}, which are more involved than their static
counterparts.

 Let us now present the structure of the paper and of our results:
\begin{itemize}\setlength\itemsep{0.6em}
  \item[] {\bf Section~\ref{s.setting}} is devoted to discussion of the
    model, notation, and assumptions on
data.
 \item[] {\bf Section~\ref{s.approx}} brings to statement of the  
  time-continuous TO problem, as well as of its time-discrete and
  approximate time-discrete versions. Here, we also check existence of
  optimal solutions and convergence of
  time-discrete to time-continuous and approximate time-discrete to
  time-discrete solutions.
   \item[] {\bf Section~\ref{s.gamma}} focuses on the 
   sharp interface limit $\delta \to 0$. In particular, we prove that
   solutions of time-continuous TO
  problem~\eqref{TOmain} converge to solutions of the corresponding
  sharp-interface limiting TO problem for $\delta=0$. This is based on
  an evolutive Modica-Mortola argument. 
   \item[] {\bf Section~\ref{s.diff}} contains the investigation of
     the 
  differentiability of the control-to-state map for the approximate
  time-discrete TO problem, where $\gamma<+\infty$. Correspondingly, a detailed analysis of
   first-order
  optimality conditions in the approximate
  time-discrete case is presented. 
   \item[] {\bf Section~\ref{s.optimality}} eventually leads to
  first-order optimality  conditions for both time-continuous and the time-discrete
  TO problems. These ensue  by passing to the limit in the
  corresponding ones for the approximate
  time-discrete TO problem from Section~\ref{s.diff}.
   \item[] {\bf The Appendix} features a technical 
  convergence argument  which is used in the study of
  discrete-to-continuous limits for  quasistatic evolutions.
\end{itemize}

%%%%%%%%%%%%%%%%%%%%%%%%%%%%%%%%%%%%%%%%%%%%%%
\section{Setting of the problem}
\label{s.setting}

 We devote this section to fixing notation and assumptions on
data. In the following,~$\M^n$ indicates the space of 2-tensors in
$n$ dimensions, indicated in bold face in the following, and~$\M^n_S$ is the subspace of
symmetric 2-tensors. The symbol~$\M^n_D$ indicates symmetric and
deviatoric 2-tensors, namely those with vanishing trace. The symbol
$\cdot$ indicates contraction with respect to all indices. In
particular ${\boldsymbol A}\cdot {\boldsymbol B} = A_{ij}B_{ij}$
and $u\cdot v=u_iv_i$ (summation convention on repeated indices)  for
all ${\boldsymbol A}, {\boldsymbol B}\in\M^n$, $u,v \in \R$.

The elasticity tensor~$\C$ and the
kinematic-hardening tensor~$\ha$  are asked to be {\it isotropic} for all~$z$. In
particular, we ask for  
\begin{equation}\C( z )  := 2\mu(z) \mathbb I + \lambda(z) ( \boldsymbol{\mathrm{I}}
\otimes  \boldsymbol{\mathrm{I}}), \quad  \ha(z):= h(z) \mathbb
I  \label{eq:data3}
\end{equation}
where~$\lambda(z)$ and~$\mu(z)$ are the Lam\'e coefficients,~$h(z)$  is the
hardening modulus, and $\mathbb I$ and $\boldsymbol{\mathrm{I}}$ denote the identity 4 and
2-tensor, respectively. Isotropy in particular guarantees that~$\C$ and~$\ha$
map~$\M^n_D$ to~$\M^n_D$.

 We assume the  material coefficients to be
differentiable with respect to $z$ and to be defined in all of~$\mathbb R$. In particular, we
ask 
\begin{equation}
\mu, \,  \lambda,  \, h, \, d \in C^1(\mathbb R)\,.\label{eq:data0}
\end{equation}
  We moreover define them as 
constant on $\{z\leq 0\}$ and $\{ z\geq 1\}$.  This last provision
allows us to recover the property $z \in [0,1]$ a posteriori, without
the need of enforcing it a-priori as a constraint. The reader is referred
to~\eqref{min2},~\eqref{min3}, and~\eqref{min4} for additional
details. 

 All material  coefficients  are asked   to be positive and bounded, uniformly with
respect to $z$, namely,  we assume that 
\begin{equation}\label{e.hp-bound}
\exists 0<\alpha<\beta<+\infty \ \forall z \in [0,1]: \quad \alpha \leq \mu(z)\,,
\, \lambda(z), \, h(z), \,  d(z) \leq \beta\, .
\end{equation}
This in particular implies that $\C$ and $\ha$ are uniformly
positive definite and bounded, independently of $z$.  Indeed, one
can find  $0< \alpha_{\C} < \beta_{\C}<+\infty$ and $0 <
\alpha_{\ha} < \beta_{\ha} <+\infty$ such that 
\begin{align}
& \alpha_{\C} | \boldsymbol{\e}|^{2} \leq \C(z) \boldsymbol{\e} {\, \cdot\,} \boldsymbol{\e} \leq \beta_{\C} | \boldsymbol{\e}|^{2} \qquad \text{for every $\boldsymbol{\e} \in \M^{n}_{S}$}\,, \label{e.C} \\[1mm]
&\alpha_{\ha} | \q|^{2} \leq \ha(z) \q {\, \cdot\,} \q \leq \beta_{\ha} |\q |^{2} \qquad \text{for every $\q \in \M^{n}_{D}$} \label{e.H}\,.
\end{align} 

 The design domain $\Om\subset\R^{n}$ is taken to be open, connected, and
with Lipschitz boundary~$\partial \Om$.  We also fix  two
subsets~$\Gamma_{N}, \Gamma_{D}$ of~$\partial \Om$, which from now on
will be referred to as {\it Neumann} and {\it Dirichlet} part of~$\partial \Om$, respectively. We assume $\Gamma_D,\, \Gamma_N \subset \partial \Omega $ to be open in
the topology of~$\partial \Omega$ with $\overline \Gamma_N \cap
\overline \Gamma_D=\emptyset$,  $\overline \Gamma_N \cap
\overline \Gamma_D =\partial \Omega$,  and where~$\overline \Gamma_N$ and~$\overline \Gamma_D$ are closures in~$\partial \Omega$. We moreover assume that~$\Gamma_D$ has positive surface measure, namely~$\mathcal
H^{n-1} (\Gamma_D)>0$, where the latter is the $(n{-}1)$-Hausdorff surface measure in~$\R^n$. 
Furthermore, we suppose that~$\Omega \cup \Gamma_{N}$ is regular {\it in the sense of Gr\"oger}~\cite[Definition~2]{Groeger}, that is, for every $x \in \partial\Om$ there exists an open neighborhood~$U_{x} \subseteq \R^{n}$ of~$x$ and a bi-Lipschitz map~$\Psi_{x} \colon U_{x} \to \Psi(U_{x})$ such that $\Psi_{x} ( U_{x} \cap (\Om \cup \Gamma_{N}))$ coincides with one of the following sets:
\begin{eqnarray*}
&& \displaystyle V_{1} \coloneq \{ y \in \R^{n} : \, | y | \leq 1 , \, y_{n} <0\}\,, \\ [1mm]
&& \displaystyle V_{2} \coloneq \{ y \in \R^{n} : \, | y | \leq 1, \, y_{n}\leq 0\} \,, \\ [1mm]
&& \displaystyle V_{3} \coloneq \{ y \in V_{2} ; \, y_{n} <0 \text{ or } y_{1} >0\} \,,
\end{eqnarray*}
where $y_{i}$ is the $i$-th component of~$y \in \R^{n}$. This last assumption  is crucially used  in the proof of Theorem~\ref{t.1}. 

For every $w \in H^{1}(\Om; \R^{n})$, we define the set of admissible
 displacements
\begin{align*}
\A(w) := \{ (u, \strain, \p) \in H^{1} ( \Om ; \R^{n} ) \times L^{2} ( \Om ; \M^{n}_{S} ) \times L^{2} ( \Om ; \M^{n}_{D} ) : \, \e u = \strain + \p , \, u= w \text{ on $\Gamma_{D}$} \}\,,
\end{align*}
where $\e u$ denotes the symmetric part of the gradient of~$u$, 
namely $\e u=(\nabla u + \nabla u)^\top/2$. 

As concerns data, we assume the  {\it volume-force density
  per unit mass}  $f  $, the  {\it
  surface-traction density}~$g  $, and the Dirichlet {\it boundary
  displacement}~$w  $, to  satisfy 
\begin{align}
  \label{eq:data}
  f \in H^{1}(0,T; L^{p}(\Om; \R^{n})), \quad g \in H^{1}(0,T; L^{p}(\Gamma_{N};
\R^{n})), \quad w \in
H^{1}(0,T; W^{1,p}(\Om; \R^{n}))
\end{align}
for some given $p\in (2,+\infty)$. Additionally, we assume that 
\begin{align}
  \label{eq:data2}
  f(0)=g(0)=w(0)=0\,.
\end{align}
This last requirement ensures the compatibility of the initial datum 
\begin{align}
  \label{eq:initial}
  (u(0),\strain(0),\p(0))=(0,0,0)\,.
\end{align}
The assumptions \eqref{eq:data3}--\eqref{eq:initial} of this Section
are assumed throughout the paper, without further explicit mention.

%%%%%%%%%%%%%%%%%%%%%%%%%%%%%%%%%%%%%%%%%%%%%%%%%%%%%%%%%%%%%%

\section{The Topology  Optimization problem and its approximations}\label{s.approx}

 This section is devoted to make the topology optimization problem
precise and present its time-discretization and regularization. In
particular, we prove the existence of optimal phase-fields $z$ in the various
settings, which are then connected via variational convergence
arguments. 

 Let us start by defining {\it quasistatic evolutions} of the
elastoplastic system given the phase-field~$z$. We follow here the energetic formulation of quasistatic evolutions~\cite{Mielke-Roubicek}, in which the elastoplastic system is driven by
energy-storage and energy-dissipation mechanism, calling for 
the definition of the {\it energy}~$\E$ and the {\it dissipation}~$\D$. We define
\begin{align*}
& \E(t, z, u, \strain,  \p) := \frac12 \int_{\Om} \C(z)  \strain {\,
  \cdot\,} \strain \,\di x + \frac12 \int_{\Om} \ha (z) \p{\, \cdot\,}
  \p \, \di x - \int_{\Om} \ell(z) f(t) {\, \cdot\,} u\, \di x -
  \int_{ \Gamma_N} g(t) {\, \cdot\,} u \, \di \HH^{n-1}
\\
&
\D(z, \qq) := \int_{\Om} d(z) | \qq| \, \di x 
\end{align*}
for every $z \in L^{\infty}(\Om)$, every $(u, \strain, \p) \in
\A(w(t))$, and every $\qq \in L^{1}(\Om; \M^{n}_{D})$. For  given
 $\p\colon [0,T] \to L^{2}(\Om; \M^{n}_{D})$ and~$z \in
L^{\infty}(\Om)$, we further define the {\it total dissipation functional}
\begin{align*}
\mathcal{V}([0,t]; z, \p(\cdot)) := \sup\, \bigg\{ \sum_{t_{j} \in \mathcal{P}} \D \big( z, \p(t_{j}) - \p(t_{j-1})\big) : \text{ $\mathcal{P}$ is a partition of $[0,t]$} \bigg\} \,.
\end{align*}
 With these ingredients at hand, we are able to pose the following definition. 

\begin{definition}[Quasistatic evolution given $z$]\label{d.quasistatic}
%Let $p \in (2, +\infty)$, $f \in H^{1}(0,T; L^{p} (\Om; \R^{n}))$, $g
%\in H^{1}(0,T; L^{p}(\Gamma_{N}; \R^{n}))$, and $w \in H^{1}(0,T;
%W^{1,p}(\Om; \R^{n}))$ with $f(0) = g(0) = w(0) = 0$. \UUU Moreover,
 Let $z \in L^\infty(\Om)$ be given.  We say that a triple $(u,
\strain, \p) \colon [0,T] \to H^{1}(\Om; \R^{n}) \times L^{2}(\Om;
\M^{n}_{S}) \times L^{2}(\Om; \M^{n}_{D})$ is a \emph{quasistatic
  evolution given $z$}  if $(u(0) , \strain(0) , \p(0) ) = ( 0, 0, 0)$ and the following conditions hold:
\begin{itemize}
%\item[\emph{(i)}] $(u(\cdot), \strain(\cdot), \p(\cdot)) \in H^{1} (0,T; H^{1}(\Om; \R^{n}) \times L^{2}(\Om; \M^{n}_{S}) \times L^{2}(\Om; \M^{n}_{D}))$;

\item[\emph{(i)}] for every $t \in[0,T]$ and every $(\hat u, \hat
  \strain, \hat \p) \in \A(w(t))$
\begin{equation}\label{e.1}
\E(t, z, u(t), \strain(t), \p(t)) \leq \E(t, z, \hat u, \hat \strain,
\hat \p) + \D(z, \hat \p - \p(t))\,;
\end{equation}

\item [\emph{(ii)}] for every $t \in[0,T]$:
\begin{align}\label{e.2}
\E(t, & z, u(t), \strain(t), \p(t)) + \mathcal{V}([0,t]; z, \p(\cdot))
\\
&
=  \int_{0}^{t} \int_{\Om} \C(z) \strain(\tau) {\, \cdot\,} \e \dot{w}(\tau) \, \di x \, \di \tau 
-  \int_{0}^{t} \int_{\Om} \ell(z) \dot{f}(\tau) {\,\cdot\,} u(\tau) \, \di x \, \di \tau \nonumber
\\
&\qquad
- \int_{0}^{t} \int_{\Om} \ell(z) f(\tau) {\, \cdot\,} \dot{w}(\tau) \, \di x \, \di \tau 
- \int_{0}^{t} \int_{\Gamma_{N}} \dot{g}(\tau) {\, \cdot\,} u (\tau) \,\di \HH^{n-1}\,  \di \tau \nonumber
\\
&
\qquad 
- \int_{0}^{t} \int_{\Gamma_{N}} g( \tau) {\, \cdot\,} \dot{w}(\tau) \, \di \HH^{n-1} \, \di \tau\,. \nonumber
\end{align}
\end{itemize}
\end{definition}

Note that conditions \eqref{e.1}--\eqref{e.2} are equivalent to the
classical weak formulation of the quasistatic elastoplastic problem \eqref{response}--\eqref{quasistatic}. In
particular, a trajectory $(u(\cdot), \strain(\cdot),
\p(\cdot))$  fulfilling the initial
condition is a quasistatic evolution in the sense
of Definition \ref{d.quasistatic} if and only if 
$$ \int_{\Om} \C(z) (\e u(t) - \p(t))  {\,
  \cdot\,} \e v  \,\di x = \int_{\Om} \ell(z) f(t) {\, \cdot\,} v\, \di x +
  \int_{ \Gamma_N} g(t) {\, \cdot\,} v \, \di \HH^{n-1}$$
for all $v \in H^1(\Omega;\R^n)$ with $v = 0$ on $\Gamma_D$ and all $t \in [0,T]$ and the flow rule \eqref{flow} holds
almost everywhere. As such, the classical elastoplasticity theory
\cite{Han} ensures that, for every  $z \in L^{\infty}(\Om)$ there
exists unique a quasistatic evolution~$(u(\cdot), \strain(\cdot),
\p(\cdot))$  in the sense of Definition \ref{d.quasistatic}. In
fact, for such trajectory one can also check that  $(u(\cdot), \strain(\cdot), \p(\cdot)) \in H^{1}(0,T; H^{1}(\Om; \R^{n}) \times L^{2}(\Om; \M^{n}_{S}) \times L^{2}(\Om; \M^{n}_{D}))$, so that the dissipative term in the first line of~\eqref{e.2} can be rewritten as
\begin{displaymath}
\mathcal{V}([0, T ]; z, \p(\cdot)) = \int_{0}^{T} \int_{\Om} d(z) \, | \dot{\p}(t) | \, \di x \, \di t \,.
\end{displaymath}

We further note that the initial condition $(u(0), \strain(0), \p(0))= (0,0,0)$ has been fixed in such a way that the elastoplastic body~$\Om$ is at equilibrium at time $t=0$.
 
The TO problem consists in minimizing the compliance-type
target functional $\J_\delta(z,u(\cdot))$ from~\eqref{e.target} under the
constraint that~$u(\cdot)$ is the first component of the quasistatic
evolution given~$z$. In particular, we are interested in the following
% \begin{align}\label{e.target}
% \J%_\delta
% (z, u(\cdot) ) := &\int_{\Om} \ell(z) \,  f(T) {\, \cdot\,} u(T) \, \di x + \int_{\Gamma_{N}} g(T) {\, \cdot\,} u(T) \, \di \HH^{n-1} 
% \\
% &
% -  \int_{0}^{T} \int_{\Om} \ell(z) \, \dot{f}( \tau) {\, \cdot\,} u (\tau) \, \di x \,\di \tau  - \int_{0}^{T} \int_{\Gamma_{N}} \dot{g}(\tau) {\, \cdot\, } u(\tau) \, \di \HH^{n-1} \di \tau 
% \\
% &
% + \int_{\Om} \frac{\delta}{2} | \nabla{z}|^{2} + \frac{z^{2}(1-z)^{2}}{2\delta} \, \di x \nonumber
% \end{align}
% defined for $z \in H^{1}(\Om) \cap L^{\infty}(\Om)$ and for $u \in L^{2}(0,T; H^{1} (\Om; \R^{n}) )$. 
% %In the previous formula, we assume $f \in H^{1}(0,T; L^{p} (\Om;
% \R^{n}))$ and $g \in H^{1}(0,T; L^{p}(\Gamma_{N}; \R^{n}))$ for some
% $p \in (2, +\infty)$. Let us further fix a Dirichlet boundary
% condition $w \in H^{1}(0,T; W^{1,p}(\Om; \R^{n}))$. We assume that
% the initial data $f(0) = g(0) = 0$ and $w(0) = 0$. We introduce the optimization problem
\begin{align}\label{min2}
&\min\big\{\J_\delta(z,u(\cdot) ) : \ z \in H^1(\Om)   \text{ and} \    (u(\cdot), \strain(\cdot),
\p(\cdot)) \text{ is a quasistatic evolution  given $z$}\big\} \,. 
\end{align}

The existence of an optimal phase-field $z $ solving~\eqref{min2} can
be proved by applying the Direct Method as we show in the next
proposition.

\begin{proposition}[Existence]\label{p.1}
 The \emph{TO} problem~\eqref{min2} admits a solution. In particular, every solution~$z$ satisfies $0 \leq z \leq 1 $ almost everywhere in~$\Om$.
\end{proposition}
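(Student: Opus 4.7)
The plan is to apply the Direct Method. Let $\{z_k\}_k \subset H^1(\Om)$ be a minimizing sequence for~\eqref{min2}, with $(u_k, \strain_k, \p_k)$ the associated quasistatic evolutions in the sense of Definition~\ref{d.quasistatic}. The gradient term in $\J_\delta$ yields a uniform bound on $\{\nabla z_k\}_k$ in $L^2(\Om; \R^n)$, while the double-well term, growing quartically at infinity, bounds $\{z_k\}_k$ in $L^4(\Om)$. Hence $\{z_k\}_k$ is bounded in $H^1(\Om)$, and up to a subsequence we have $z_k \rightharpoonup z^*$ weakly in $H^1(\Om)$, strongly in $L^2(\Om)$ by Rellich-Kondrachov, and almost everywhere in $\Om$.

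Since $\mu, \lambda, h, d, \ell$ are continuous on $\R$ and uniformly bounded on it (being constant outside $[0,1]$ and satisfying~\eqref{e.hp-bound} on $[0,1]$), the dominated convergence theorem yields $\C(z_k) \to \C(z^*)$, $\ha(z_k) \to \ha(z^*)$, $d(z_k) \to d(z^*)$, and $\ell(z_k) \to \ell(z^*)$ strongly in every $L^q(\Om)$ with $q<\infty$. Combining the uniform coercivity~\eqref{e.C}--\eqref{e.H}, the regularity of the data~\eqref{eq:data}--\eqref{eq:data2}, and the classical a priori estimates for quasistatic elastoplasticity (see~\cite{Han}), I obtain a bound for $(u_k, \strain_k, \p_k)$ in $H^1(0,T; H^1(\Om;\R^n) \times L^2(\Om; \M^n_S) \times L^2(\Om; \M^n_D))$ independent of $k$. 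A further subsequence then converges weakly in this space to some $(u^*, \strain^*, \p^*)$.

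The main step, which I expect to be the principal obstacle, is to verify that $(u^*, \strain^*, \p^*)$ is the quasistatic evolution given $z^*$, i.e., to pass to the limit in the stability~\eqref{e.1} and in the energy-dissipation balance~\eqref{e.2}. The energy terms pass via weak lower semicontinuity combined with the strong convergence of $\C(z_k)$, $\ha(z_k)$, and $\ell(z_k)$; the continuity of the right-hand side of~\eqref{e.1} in its dissipative part requires strong convergence $\p_k(t) \to \p^*(t)$ in $L^2(\Om; \M^n_D)$, which in turn follows from the energy balance by a standard Ioffe-type argument applied to the quadratic term $\ha(z) \p \cdot \p$. The time integrals on the right-hand side of~\eqref{e.2} are handled via strong-weak pairings, and the opposite inequality in the limit energy balance is recovered from the limit stability in the usual fashion of the energetic formulation~\cite{Mielke-Roubicek}. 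By uniqueness of quasistatic evolutions, $(u^*, \strain^*, \p^*)$ is then the unique quasistatic evolution given $z^*$.

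Lower semicontinuity of $\J_\delta$ along the subsequence then closes the existence argument: the compliance terms pass by standard strong-weak convergence, the Dirichlet integral $\int |\nabla z|^2\,\di x$ is weakly lower semicontinuous, and the double-well integral passes by Fatou's Lemma. To obtain the pointwise bound $0 \leq z^* \leq 1$, set $\tilde z := \max(0, \min(1, z^*)) \in H^1(\Om)$. By the assumption that $\mu, \lambda, h, d, \ell$ are constant on $\{z \leq 0\}$ and on $\{z \geq 1\}$, the quasistatic evolution associated to $\tilde z$ coincides with $(u^*, \strain^*, \p^*)$, so the compliance part of $\J_\delta$ is unchanged. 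At the same time $|\nabla \tilde z| \leq |\nabla z^*|$ a.e.\ and $\tilde z^2(1-\tilde z)^2 \leq (z^*)^2(1-z^*)^2$ a.e., with strict inequality in the double-well wherever $z^* \notin [0,1]$. Minimality of $z^*$ therefore forces $|\{z^* \notin [0,1]\}| = 0$.
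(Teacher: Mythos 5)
Your proposal follows the paper's overall strategy — the Direct Method — but the key technical step, passing to the limit in the stability inequality~\eqref{e.1}, is handled differently, and your route as sketched is circular. You observe (correctly) that passing the dissipation term $\D(z_k,\hat\p-\p_k(t))$ for a \emph{fixed} test triple requires strong convergence $\p_k(t)\to\p^*(t)$ in $L^2$, and you propose to obtain this strong convergence ``from the energy balance by a standard Ioffe-type argument.'' But the limit energy balance is precisely what you have not yet established, and the usual way to upgrade weak to strong convergence of the states is to first show that the limit is stable, then derive a lower energy estimate from that stability, and finally compare with the upper estimate coming from lsc. Without an independent proof of stability you cannot close this loop. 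The paper bypasses the problem by testing~\eqref{e.1} for $z_j$ with the \emph{mutual recovery sequence} $(\hat u + u_j(t)-u(t),\,\hat\strain + \strain_j(t)-\strain(t),\,\hat\p + \p_j(t)-\p(t))$: the argument of $\D$ then reduces to the fixed element $\hat\p-\p(t)$, so the dissipation passes by the strong $L^q$ convergence of $d(z_j)$ alone, while the energy terms are handled by expanding the quadratic form. This is the standard evolutive $\Gamma$-convergence device (see~\cite{mrs}, \cite[Prop.~5.7]{Mielke05}) and it makes strong convergence of $\p_j(t)$ a \emph{consequence} rather than a prerequisite.

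Two smaller points. First, the paper simplifies the compactness step by noting that, since all coefficients are constant on $\{z\le 0\}$ and $\{z\ge 1\}$, one may assume WLOG that $z_j\in[0,1]$ a.e.\ from the outset; your version (derive an $H^1$ bound from the Modica--Mortola terms, then truncate the limit) also works, but to extract the $H^1$ bound you should say explicitly why the compliance part of $\J_\delta(z_k,u_k(\cdot))$ is uniformly bounded below — namely that the state bounds hold uniformly in $z$ because the coefficients are bounded on all of $\R$. Second, your truncation argument at the end (replacing $z^*$ by $\tilde z=\max(0,\min(1,z^*))$, noting the state is unchanged and both Modica--Mortola terms strictly decrease on $\{z^*\notin[0,1]\}$) is correct and is a clean way to make precise what the paper dismisses as ``clear in view of our hypotheses.''
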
 

\begin{proof}
Note that $\J_\delta(z,u(\cdot))>-\infty$ for all $z \in
H^{1}(\Om) $ and for the corresponding quasistatic evolution
$(u(\cdot),\strain(\cdot),\p(\cdot))$. Let  $z_{j} \in
H^{1}(\Om)$ be a minimizing sequence for \eqref{min2}. By
the assumptions on~$\C$, $\ha$, $d$, and~$\ell$, we may assume without
loss of generality that $z_{j} \in [0,1]$ almost everywhere,
so that, up to a not relabeled subsequence, $z_{j}
\rightharpoonup z$ weakly in $H^{1}(\Om)$ and $0 \leq z \leq 1$
almost everywhere. Let us denote by $(u_{j} (\cdot), \strain_{j}
(\cdot), \p_{j}(\cdot))$ the quasistatic evolution given~$z_{j}$. In view of the energy balance~\eqref{e.2} and of the hypotheses~\eqref{e.hp-bound}--\eqref{e.H}, $(u_{j}(\cdot), \strain_{j} (\cdot), \p_{j} (\cdot))$ is bounded in~$L^{\infty}(0,T; H^{1} ( \Om; \R^{n}) \times L^{2} ( \Om; \M^{n}_{S}) \times L^{2} (\Om; \M^{n}_{D}) )$ and~$\dot{\p}_{j}$ is bounded in $L^{1}(0,T; L^{1}(\Om; \M^{n}_{D}))$. Therefore, by Helly's Selection Principle, $\p_{j}(t) \rightharpoonup \p(t)$ weakly in~$L^{2}(\Om; \M^{n}_{D})$ for every $t \in [0,T]$ for some $\p \in L^{\infty} (0,T; L^{2}(\Om; \M^{n}_{D}))$.

Let us fix $t\in[0,T]$. By the boundedness of~$u_{j}(t)$ and
of~$\strain_{j}(t)$, we may assume that, up to a not relabeled
subsequence, $u_{j}(t) \rightharpoonup u(t)$ weakly in~$H^{1}(\Om;
\R^{n})$ and $\strain_{j}(t) \rightharpoonup \strain (t) $ weakly
in~$L^{2}(\Om; \M^{n}_{S})$. For every $(\hat u, \hat \strain, \hat
\p) \in \A(w(t))$, we test the equilibrium condition~\eqref{e.1} for
$(u_{j}(t), \strain_{j}(t), \p_{j}(t))$ by the triple
\begin{displaymath}
 (\hat{u}_j(t), \hat{\strain}_j(t), \hat{\p}_j(t)) := ( \hat u +
u_{j}(t) - u (t), \hat \strain + \strain_{j}(t) - \strain (t) , \hat
\p + \p_{j}(t) - \p(t))  \in \A(w(t)).
\end{displaymath}
By exploiting the quadratic character of $\E$ and passing to the
limit as $j\to +\infty$ we
deduce that~$(u(t), \strain (t), \p(t))$ is the unique solution
of~\eqref{e.1}. In particular, the whole sequence $(u_{j}(t), \strain_{j} (t),
\p_{j} (t))$ converges to~$(u(t), \strain (t), \p(t))$ weakly
in~$H^{1}(\Om; \R^{n}) \times L^{2}(\Om; \M^{n}_{S}) \times L^{2}(\Om;
\M^{n}_{D})$. Moreover, $(u_{j}, \strain_{j}, \p_{j})$ converges
weakly$^*$ in $L^{\infty}(0,T; H^{1}(\Om; \R^{n}) \times L^{2}(\Om;
\M^{n}_{S}) \times L^{2}(\Om; \M^{n}_{D}))$ to~$(u, \strain,
\p)$. This last convergence implies that for every~$t \in[0,T]$
\begin{align*}
\E(t,  z, u(t), \strain(t),  \p(t)) & + \mathcal{V}([0,t]; z, \p(\cdot))  
\\
&
\leq  \int_{0}^{t} \int_{\Om} \C(z) \strain(\tau) {\, \cdot\,} \e \dot{w}(\tau) \, \di x \, \di \tau - \int_{0}^{t} \int_{\Om}\ell(z)  \dot{f}(\tau) {\,\cdot\,} u(\tau) \, \di x \, \di \tau 
\\
&
\qquad - \int_{0}^{t} \int_{\Om} \ell(z) f(\tau) {\, \cdot\,} \dot{w}(\tau) \, \di x \, \di \tau 
- \int_{0}^{t} \int_{\Gamma_{N}}  \dot{g}(\tau) {\, \cdot\,} u (\tau) \,\di \HH^{n-1}\,  \di \tau 
\\
&\qquad 
- \int_{0}^{t} \int_{\Gamma_{N}}  g( \tau) {\, \cdot\,} \dot{w}(\tau) \, \di \HH^{n-1} \, \di \tau \,.
\end{align*}
The opposite inequality can be recovered by exploiting the equilibrium
condition~\eqref{e.1} by applying
\cite[Prop. 5.7]{Mielke05}. Hence, the triple $(u(\cdot), \strain
(\cdot), \p(\cdot))$ is the unique quasistatic evolution  given~$z \in H^{1}(\Om; [0,1])$.
 As the target functional~$\J_\delta$ is lower semicontinuous, we deduce
that~$z$ is a solution of~\eqref{min2}.

The second part of the statement is clear in view of our hypotheses on~$\C$, $\ha$, $d$, and~$\ell$.
\end{proof}

 The existence of solutions to \eqref{min2} being proved, in the
remainder of this section we focus on their approximation. At first, we
discretize the quasistatic evolution constraint in time. Subsequenly,
we regularize the flow rule. This will be instrumental to obtaining
 first-order optimality conditions, which we then tackle in Sections \ref{s.diff}--\ref{s.optimality}.

Let us hence start by a time discretization of the quasistatic
evolution problem %A natural approximation of~\eqref{min2} can be
                       %given in terms of a time-discrete problem 
(see also~\cite{Rindler, Rindler2, Wachsmut1}). Precisely, fixed~$k \in \mathbb{N}$ and $\tau_{k}:= T/k$, we define for $i = 0, \ldots, k$ the time nodes $t^{k}_{i}:= i \tau_{k}$ and the functions
\begin{equation}\label{e.fgw}
f^{k}_{i} := f(t^{k}_{i})\,, \quad  g^{k}_{i} := g(t^{k}_{i})\,, \quad  w^{k}_{i} := w(t^{k}_{i})\,.
\end{equation}
For later use, we further set for $t \in [t^{k}_{i-1}, t^{k}_{i})$
\begin{equation}\label{e.fgw2}
\begin{split}
& f_{k}(t) := f^{k}_{i-1} + \frac{t - t^{k}_{i-1}}{\tau_{k}} ( f^{k}_{i} - f^{k}_{i-1}) \,,\qquad  g_{k}(t) := g^{k}_{i-1} + \frac{t - t^{k}_{i-1}}{\tau_{k}} ( g^{k}_{i} - g^{k}_{i-1})\,,
\\
&
w_{k}(t) := w^{k}_{i-1} + \frac{t - t^{k}_{i-1}}{\tau_{k}} ( w^{k}_{i} - w^{k}_{i-1})\,.
\end{split}
\end{equation} 
Notice that $( f_{k}, g_{k}, w_{k})$ converges to~$(f, g, w) $ in $H^{1}(0,T; L^{2}(\Om; \R^{n}) \times L^{2}(\Gamma_{N}) \times H^{1}(\Om; \R^{n}) )$. We define the time-discrete energy functional
\begin{align*}
& \E_{k}(t^{k}_{i},z,  u, \strain, \p) := \frac{1}{2} \int_{\Om} \C(z)  \strain{\, \cdot\,} \strain \, \di x + \frac12 \int_{\Om} \ha(z) \p {\, \cdot\,} \p \, \di x - \int_{\Om} \ell (z) f^{k}_{i} { \, \cdot\,} u \, \di x - \int_{\Gamma_{N}} g^{k}_{i} {\, \cdot\,} u \, \di \HH^{n-1}
\end{align*}
and the discrete target functional
\begin{align}\label{e.target2}
\J_{k,\delta} (z, (u_{i})_{i=0}^{k} ) :=   & \int_{\Om}\ell (z) \,  f^{k}_{k} {\, \cdot\,} u_{k} \, \di x + \int_{\Gamma_{N}} g^{k}_{k} {\, \cdot\,} u_{k} \, \di \HH^{n-1} 
\\
&
-  \sum_{i=0}^{k-1} \bigg( \int_{\Om} \ell(z) \big( f_{i+1}^{k} - f^{k}_{i} \big) {\, \cdot\,} u_{i}  \, \di x + \int_{\Gamma_{N}} \big( g^{k}_{i+1} - g^{k}_{i} \big) {\, \cdot\,} u_{i}  \, \di \HH^{n-1} \bigg) \nonumber
\\
&
 + \int_{\Om} \frac{\delta}{2} | \nabla{z}|^{2} + \frac{z^{2}(1-z)^{2}}{2\delta} \, \di x \,, \nonumber
\end{align}
 for $z \in H^1(\Om)\cap L^\infty(\Om)$ and $(u_{i}) _{i=0}^{k} \in \big( H^{1}(\Om; \R^{n})\big)^{k+1}$. In the sequel, we will use a similar notation for $(\strain_{i})_{i=0}^{k} \in \big(L^{2}(\Om; \M^{n}_{S})\big)^{k+1}$ and $ (\p_{i})_{i=0}^{k} \in \big(L^{2}(\Om; \M^{n}_{D})\big)^{k+1}$
%where
%\begin{align*}
% \U_{k} := \{ v \in L^{2}(0,T; H^{1}(\Om; \R^{n})): \, \text{$v$ is constant on $[t^{k}_{i}, t^{k}_{i+1})$ for $i=0, \ldots, k-1$}\}\,.
%\end{align*}
%In particular, the space~$\U_{k}$ can be identified with the product $\big( H^{1}(\Om; \R^{n})\big)^{k}$. Analogously to~$\U_{k}$, we define the spaces
%\begin{align*}
%&\mathfrak{E}_{k}:= \{ \strain \in L^{2}(0,T; L^{2}(\Om; \M^{n}_{S})): \text{ $\strain$ is constant on~$[t^{k}_{i}, t^{k}_{i+1})$ for $i = 1,\ldots, k-1$}\}\,,\\
%& \mathfrak{P}_{k} : =  \{ \p \in L^{2}(0,T; L^{2}(\Om; \M^{n}_{D})): \text{ $\p$ is constant on~$[t^{k}_{i}, t^{k}_{i+1})$ for $i = 1,\ldots, k-1$}\} \,.
%\end{align*}

In the minimization of the time-discrete target
functional~$\J_{k,\delta}$ we ask the triple $(u_{i}, \strain_{i},
\p_{i})_{i=0}^{k} \in \big( H^{1}(\Om; \R^{n}) \times L^{2}(\Om;
\M^{n}_{S}) \times L^{2}(\Om; \M^{n}_{D})\big)^{k+1}$ 
to be a
time-discrete quasistatic evolution given $z$, whose definition is given here below.

\begin{definition}[Time-discrete quasistatic evolution given
  $z$]\label{d.discrete-evolution}
Let $z\in L^\infty(\Om)$ be given and
$f^{k}_{i}, g^{k}_{i}, w^{k}_{i}$ be defined as in~\eqref{e.fgw}. We say that $(u_{i},
\strain_{i}, \p_{i})_{i=0}^{k} \in \big( H^{1}(\Om; \R^{n}) \times
L^{2}(\Om; \M^{n}_{S}) \times L^{2}(\Om; \M^{n}_{D})\big)^{k+1}$ is a
\emph{time-discrete quasistatic evolution given $z$} if $(u_{0}, \strain_{0}, \p_{0}) = (0,0,0)$ and the following holds: for every $i = 1,\ldots, k$, $(u_{i}, \strain_{i}, \p_{i}) \in \A(w^{k}_{i})$ and
\begin{equation}\label{e.3}
\E_{k} (t^{k}_{i}, z,  u_{i}, \strain_{i}, \p_{i}) + \D(z, \p_{i} - \p_{i-1}) \leq \E_{k} ( t^{k}_{i}, z, \hat u, \hat \strain, \hat \p) + \D(z, \hat \p - \p_{i-1})
\end{equation}
for every $(\hat u, \hat \strain, \hat \p) \in \A(w^{k}_{i})$.
\end{definition}

%\begin{remark}\label{r.1}
As a consequence of~\eqref{e.3} we have that every time-discrete quasistatic evolution $(u_{i}, \strain_{i}, \p_{i})_{i=0}^{k}$ satisfies the following energy inequality: for every $i = 1, \ldots, k$
\begin{align}\label{e.4}
\E_{k} (t^{k}_{i}, & z, u_{i}, \strain_{i}, \p_{i} ) + \sum_{j=1}^{i} \D( z , \p_{j} - \p_{j-1} )
\\
&
 \leq   \sum_{j=1}^{i}  \int_{\Om} \C (z) \strain_{j-1} {\, \cdot\,} \e ( w_{j}^{k} - w_{j-1}^{k} ) \, \di x 
-\int_{\Om} \ell(z) ( f^{k}_{j} - f^{k}_{j-1} ) {\,\cdot\,} u_{j-1} \, \di x \nonumber
\\
&\qquad 
-  \int_{\Om}\ell(z)  f_{j} {\, \cdot\,} ( w^{k}_{j} - w^{k}_{j-1} ) \, \di x  
+ \int_{\Gamma_{N}} ( g^{k}_{j} - g^{k}_{j-1} ) {\, \cdot\,} u_{j-1} \,\di \HH^{n-1}  \nonumber
\\
&\qquad 
+ \int_{\Gamma_{N}} g_{j}^{k} {\, \cdot\,} ( w^{k}_{j} - w^{k}_{j-1} ) \, \di \HH^{n-1}  
  +  \int_{\Om} \C (z) \e ( w_{j}^{k} - w_{j-1}^{k} )   {\, \cdot\,} \e ( w_{j}^{k} - w_{j-1}^{k} ) \, \di x \,. \nonumber
\end{align}

Furthermore, we note that a time-discrete quasistatic evolution can always be constructed by iteratively solving the minimum problems
\begin{equation}\label{e.mma}
\min \, \{ \E_{k} ( t^{k}_{i}, z,  u, \strain, \p) + \D(z, \p - \p_{i-1}) : \, (u, \strain, \p) \in \A(w^{k}_{i})\} \,,
\end{equation}
for $i \geq 1$, where we have set $(u_{0}, \strain_{0}, \p_{0}) = (0,0,0)$. In particular, given the data $f$, $g$, and~$w$, the time-discrete quasistatic evolution is unique, as the solution of the minimum problem~\eqref{e.mma} is unique.
%\end{remark}

The time-discrete TO problem reads as
\begin{align}\label{min3}
\min\, \{\J_{k,\delta}(z, (u_{i})_{i=0}^{k}): &\text{ $z \in H^{1}(\Om)$ and $(u_{i}, \strain_{i}, \p_{i})_{i=0}^{k}$ }
\\
& \text{ is a time-discrete quasistatic evolution given $z$} \} \,. \nonumber
\end{align}

\begin{proposition}[Existence, time-discrete]\label{p.2}
 The time-discrete \emph{TO} problem~\eqref{min3} admits a solution. In particular, every solution~$z$ satisfies $0\leq z \leq 1$ almost everywhere in~$\Om$.
 \end{proposition}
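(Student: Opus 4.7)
The plan is to apply the Direct Method to~\eqref{min3}, in close analogy with the proof of Proposition~\ref{p.1}, but now proceeding inductively over the $k{+}1$ time nodes. Let $z_j \in H^1(\Om)$ be a minimizing sequence. Since $\mu,\lambda,h,d,\ell$ are constant outside $[0,1]$, the mechanical response and the dissipation are unchanged upon truncating $z_j$ to $[0,1]$, while the Modica--Mortola term only decreases; hence I may assume $0 \leq z_j \leq 1$ a.e. The gradient penalty in $\J_{k,\delta}$ gives an $H^1$ bound, so up to a subsequence $z_j \rightharpoonup z$ weakly in $H^1(\Om)$, strongly in $L^2(\Om)$ by Rellich, and pointwise almost everywhere, with $0 \leq z \leq 1$ a.e.; by continuity of the material coefficients and dominated convergence, $\C(z_j), \ha(z_j), d(z_j), \ell(z_j)$ converge strongly to $\C(z), \ha(z), d(z), \ell(z)$ in every $L^r(\Om)$ with $r<+\infty$. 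Letting $(u^j_i, \strain^j_i, \p^j_i)_{i=0}^k$ denote the unique time-discrete quasistatic evolution given $z_j$, the iterated energy inequality~\eqref{e.4} combined with the bounds~\eqref{e.hp-bound}--\eqref{e.H} and~\eqref{eq:data} yields a $j$-uniform bound on $(u^j_i, \strain^j_i, \p^j_i)$ in $H^1(\Om;\R^n) \times L^2(\Om;\M^n_S) \times L^2(\Om;\M^n_D)$ for each $i$; a finite diagonal extraction then gives $(u^j_i, \strain^j_i, \p^j_i) \rightharpoonup (u_i, \strain_i, \p_i)$ in this product space, with $(u_i, \strain_i, \p_i) \in \A(w^k_i)$ by persistence of the linear constraint and of the boundary condition on $\Gamma_D$.

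The key step is to identify $(u_i, \strain_i, \p_i)_{i=0}^k$ as the time-discrete quasistatic evolution given $z$; I plan to proceed by induction on $i$, proving simultaneously that the convergence is in fact \emph{strong} in $L^2$ (which is crucial for the next inductive step). The base case $i=0$ is trivial. For the inductive step, view $(u^j_i, \strain^j_i, \p^j_i)$ as the unique minimizer on $\A(w^k_i)$ of the strictly convex functional
\[
F_j(\hat u, \hat\strain, \hat\p) := \E_k(t^k_i, z_j, \hat u, \hat\strain, \hat\p) + \D(z_j, \hat\p - \p^j_{i-1}),
\]
and let $F$ be the analogous functional built from $z$ and $\p_{i-1}$. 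Using the strong convergence of the coefficients together with the inductive hypothesis $\p^j_{i-1} \to \p_{i-1}$ in $L^2$, one obtains $F_j(\hat u, \hat\strain, \hat\p) \to F(\hat u, \hat\strain, \hat\p)$ for every fixed competitor, whence the $\limsup$ of the minimum values of $F_j$ is bounded above by the minimum of $F$. Weak lower semicontinuity (quadratic parts of $\E_k$ are weakly lsc, linear parts are continuous after coefficient convergence, and $\D$ is convex and lsc with strongly convergent coefficient) yields $\liminf_j F_j(u^j_i, \strain^j_i, \p^j_i) \geq F(u_i, \strain_i, \p_i)$. Combining, the minimum values converge and $(u_i, \strain_i, \p_i)$ is identified as the unique minimizer of $F$. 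The uniform convexity of $F_j$ inherited from~\eqref{e.C}--\eqref{e.H} then upgrades the weak to strong $L^2$ convergence, closing the induction.

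With the time-discrete quasistatic evolution identified, the lower semicontinuity of $\J_{k,\delta}$ along the sequence is routine: the integral terms in the first two lines of~\eqref{e.target2} are continuous thanks to the strong $L^r$ convergence of $\ell(z_j)$ and the strong $L^2$ convergence of $u^j_i$ (again via Rellich), the gradient term of the Modica--Mortola functional is weakly lsc in $H^1$, and the double-well term converges by dominated convergence. Consequently $z$ solves~\eqref{min3}, and the truncation yields $0 \leq z \leq 1$ a.e. The step I expect to be the main obstacle is the passage to the limit inside $\D(z_j, \hat\p - \p^j_{i-1})$ for a varying competitor, which is what forces the inductive \emph{strong} $L^2$ convergence of $\p^j_{i-1}$ and makes the uniform convexity of $F_j$ indispensable.
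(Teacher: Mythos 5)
Your proof is correct and follows essentially the same strategy as the paper's: Direct Method with a minimizing sequence truncated to $[0,1]$, $j$-uniform bounds from the energy inequality~\eqref{e.4}, weak extraction, then an induction on $i$ identifying $(u_i,\strain_i,\p_i)$ as the unique minimizer at each time node while simultaneously upgrading to strong $L^2$ convergence (needed because $\p^j_{i-1}$ enters the dissipation at the next step), and finally lower semicontinuity of $\J_{k,\delta}$. The only difference is cosmetic: you phrase the inductive identification of the limit minimizer as a $\Gamma$/Mosco-type argument (limsup of minima bounded by the limit functional on fixed competitors, plus liminf inequality), whereas the paper simply invokes weak lower semicontinuity of $\E_k$ and $\D$; both routes arrive at the energy convergence that yields strong convergence via uniform convexity.
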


\begin{proof}
The proof is an application of the Direct Method. Let $z_{j} \in
H^{1}(\Om)$ be a minimizing sequence for~\eqref{min3} with
corresponding time-discrete quasistatic evolution $(u^{j}_{i},
\strain^{j}_{i}, \p^{j}_{i})_{i=0}^{k} \in \big( H^{1}(\Om; \R^{n})
\times L^{2}(\Om; \M^{n}_{S}) \times L^{2}(\Om;
\M^{n}_{D})\big)^{k+1}$. As material parameters are constant
on $\{z\leq 0\}$ and~$\{z\geq 1\}$ we may assume, without loss of generality, that $z_j \in [0,1]$ almost
everywhere.  By~\eqref{e.4} and by the regularity of~$f$,~$g$, and~$w$ we have that $(u^{j}_{i}, \strain^{j}_{i}, \p^{j}_{i})_{i=0}^{k}$ is uniformly bounded in $\big( H^{1}(\Om; \R^{n}) \times L^{2}(\Om; \M^{n}_{S}) \times L^{2}(\Om; \M^{n}_{D}) \big)^{k+1}$, so that we may assume, up to a subsequence, that $(u^{j}_{i}, \strain^{j}_{i}, \p^{j}_{i})_{i=0}^{k} \rightharpoonup (u_{i}, \strain_{i}, \p_{i})_{i=0}^{k}$ weakly in $\big( H^{1}(\Om; \R^{n}) \times L^{2}(\Om; \M^{n}_{S}) \times L^{2}(\Om; \M^{n}_{D})\big)^{k+1}$. Furthermore,~$z_{j}$ is bounded in~$H^{1}(\Om)$ and therefore there exists~$z \in H^{1}(\Om; [0,1])$, such that $z_{j} \rightharpoonup z$ weakly in~$H^{1}(\Om)$. It is easy to see that $\J_{k, \delta} (z, (u_{i})_{i=0}^{k} ) \leq \liminf_{j \to \infty} \J_{k, \delta} (z_{j}, (u^{j}_{i})_{i=0}^{k})$. Thus, it remains to show that $(u_{i}, \strain_{i}, \p_{i})$ still satisfies~\eqref{e.3} for every $i=1, \ldots, k$. This can be done recursively on~$i$. For $i = 1$, we have by construction that $(u_{0}^{j}, \strain^{j}_{0}, \p^{j}_{0}) = (0,0,0)$ for every $j$, so that~$(u_{1}^{j}, \strain^{j}_{1}, \p^{j}_{1})$ solves 
\begin{align*}
\min\, \{ \E_{k}(t^{k}_{1}, z_{j}, u, \strain, \p) + \D(z_{j}, \p) : \, (u, \strain, \p) \in \A(w^{k}_{1}) \}\,.
\end{align*}
The weak convergence of~$(u_{1}^{j}, \strain^{j}_{1}, \p^{j}_{1})$ to~$(u_{1}, \strain_{1}, \p_{1})$ and the lower semicontinuity of~$\E_{k}$ and~$\D$ imply that~$(u_{1}, \strain_{1}, \p_{1})$ satisfies~\eqref{e.3} and that
\begin{displaymath}
\E_{k}(t^{k}_{1}, z, u_{1}, \strain_{1}, \p_{1}) = \lim_{j \to \infty}\, \E_{k}(t^{k}_{1}, z, u^{j}_{1}, \strain^{j}_{1}, \p^{j}_{1})  \,.
\end{displaymath}
The previous equality yields the strong convergence of $(u^{j}_{1}, \strain^{j}_{1}, \p^{j}_{1})$ to $(u_{1}, \strain_{1}, \p_{1})$ in $H^{1}(\Om; \R^{n}) \times L^{2}(\Om; \M^{n}_{S}) \times L^{2}(\Om; \M^{n}_{D})$.

For $i\geq2$, let us assume that~\eqref{e.3} is satisfied at time~$t^{k}_{i-1}$ and that $( u^{j}_{i-1}, \strain^{j}_{i-1}, \p^{j}_{i-1}) \to (u_{i-1}, \strain_{i-1}, \p_{i-1})$ in~$H^{1}(\Om; \R^{n}) \times L^{2}(\Om; \M^{n}_{S}) \times L^{2}(\Om; \M^{n}_{D})$. Arguing as above we deduce that $(u_{i}, \strain_{i}, \p_{i}) \in \A(w^{k}_{i})$ satisfies~\eqref{e.3}. Moreover, the minimality implies that 
\begin{displaymath}
\E_{k} (t^{k}_{i}, z, u_{i}, \strain_{i}, \p_{i}) = \lim_{j \to \infty} \, \E_{k} (t^{k}_{i}, z_{j}, u^{j}_{i}, \strain^{j}_{i}, \p^{j}_{i}) \,,
\end{displaymath}
which yields $( u^{j}_{i}, \strain^{j}_{i}, \p^{j}_{i}) \to (u_{i}, \strain_{i}, \p_{i})$ in $H^{1}(\Om; \R^{n}) \times L^{2}(\Om; \M^{n}_{S}) \times L^{2}(\Om; \M^{n}_{D})$.

The second part of the statement is clear in view of our hypotheses on~$\C$, $\ha$, $d$, and~$\ell$.
\end{proof}

In the following proposition, we state an auxiliary result regarding
the convergence of a sequence of time-discrete quasistatic evolutions
to a quasistatic evolution. The proof is provided in 
Appendix~\ref{appendixA}. Such a result will be used to show that a
sequence of minimizers of~\eqref{min3} converges to a minimizers of
the time-continuous problem~\eqref{min2} as the time-step~$\tau_{k}$
tends to~$0$, as well as to obtain suitable first-order optimality condition for~\eqref{min2}, starting from those of~\eqref{min3} (see Corollary~\ref{c.1} and Theorems~\ref{t.discrete-optimality} and~\ref{t.continuous-optimality}, respectively).

\begin{proposition}[Convergence of time-discrete quasistatic evolutions]\label{p.3}
Let $z_{k}, z \in H^{1}(\Om; [0,1])$ be such that $z_{k}
\rightharpoonup z$ weakly in~$H^{1}(\Om)$. For every~$k$, let
$(u^{k}_{i}, \strain^{k}_{i}, \p^{k}_{i})_{i=0}^{k}$ be the
time-discrete quasistatic evolution  associated with~$z_{k}$ and let
$(u(\cdot), \strain (\cdot), \p(\cdot))$ be the quasistatic evolution associated with~$z$ according
to~Definition~\ref{d.quasistatic}. %Assume that the piecewise linear
                                %functions~$(f_{k}, g_{k},w_{k})$
                                %defined
                                %in~\eqref{e.fgw}--\eqref{e.fgw2}
                                %converge to~$(f, g, w)$ in
                                %$H^{1}(0,T; L^{2}(\Om; \R^{n}) \times
                                %L^{2}(\Gamma_{N}) \times H^{1}(\Om;
                                %\R^{n}))$. 
Let us further set
\begin{equation}\label{e.affine-interpolant}
\begin{split}
& u_{k}(t) := u^{k}_{i} + \frac{t - t^{k}_{i}}{\tau_{k}} (u^{k}_{i+1} - u^{k}_{i}) \,,\qquad \strain_{k}(t) := \strain^{k}_{i} + \frac{t - t^{k}_{i}}{\tau_{k}} (\strain^{k}_{i+1} - \strain^{k}_{i})\,,
\\
&
\p_{k}(t) := \p^{k}_{i} + \frac{t - t^{k}_{i}}{\tau_{k}} (\p^{k}_{i+1} - \p^{k}_{i})\,.
\end{split}
\end{equation}
Then, $(u_{k}, \strain_{k}, \p_{k})$ converges to $(u, \strain, \p)$ in $H^{1} ( 0,T; H^{1}(\Om; \R^{n}) \times L^{2}(\Om; \M^{n}_{S}) \times L^{2}(\Om; \M^{n}_{D}))$.
\end{proposition}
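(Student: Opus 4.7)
The strategy follows the evolutive variational convergence scheme for rate-independent systems (cf.~\cite{mrs, Mielke-Roubicek}): extract a weakly convergent subsequence from uniform a priori bounds, identify the weak limit with the (unique) quasistatic evolution $(u,\strain,\p)$ given $z$, and finally upgrade to strong convergence in $H^{1}(0,T;\,H^{1}\times L^{2}\times L^{2})$ via an energy-balance argument. Uniqueness then promotes subsequential convergence to convergence of the whole sequence.

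Concerning a priori estimates and compactness, I would start from the discrete energy inequality~\eqref{e.4}. Combined with~\eqref{e.C}--\eqref{e.H} and the regularity of the data in~\eqref{eq:data}, a discrete Gronwall argument yields uniform bounds on $(u^{k}_{i},\strain^{k}_{i},\p^{k}_{i})_{i=0}^{k}$ in the $\ell^{\infty}$-norm with values in $H^{1}\times L^{2}\times L^{2}$. Writing the Euler--Lagrange of~\eqref{e.3} at two consecutive time nodes, subtracting, and testing with the discrete increment $(u^{k}_{i}-u^{k}_{i-1},\strain^{k}_{i}-\strain^{k}_{i-1},\p^{k}_{i}-\p^{k}_{i-1})$ (exploiting the monotonicity of $\partial|\cdot|$) gives an additional discrete $\ell^{2}$ estimate on the increments, hence a uniform $L^{2}(0,T;\,H^{1}\times L^{2}\times L^{2})$ bound on $(\dot u_{k},\dot\strain_{k},\dot\p_{k})$. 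Up to a subsequence, $(u_{k},\strain_{k},\p_{k})\rightharpoonup(\tilde u,\tilde\strain,\tilde\p)$ weakly in $H^{1}(0,T;\,H^{1}\times L^{2}\times L^{2})$ and, by a Helly-type argument based on the uniform BV-in-time bound of $\p_k$, also weakly pointwise in $t$. Meanwhile Rellich--Kondrachov gives $z_{k}\to z$ strongly in $L^{q}(\Om)$ for every admissible $q$ and, along a further subsequence, a.e., so by~\eqref{eq:data0}--\eqref{e.hp-bound} the coefficients $\C(z_{k})$, $\ha(z_{k})$, $d(z_{k})$, $\ell(z_{k})$ converge a.e.\ with a uniform bound.

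For the identification of the limit, I would pass to the limit in the discrete stability~\eqref{e.3}: the a.e.\ convergence of the coefficients with a uniform bound, combined with weak convergence of the state and dominated convergence, delivers the continuous stability~\eqref{e.1} at every $t\in[0,T]$ after interpolating between discrete nodes. Passing to the $\limsup$ in~\eqref{e.4} using Riemann-sum convergence on the right-hand side (justified by the strong convergence of $f_k,g_k,w_k$) and lower semicontinuity of $\E$ and $\mathcal{V}$ on the left-hand side yields the upper energy inequality in~\eqref{e.2}; the opposite inequality follows from the limit stability via \cite[Prop.~5.7]{Mielke05}. Uniqueness of quasistatic evolutions given $z$ (classical, cf.~\cite{Han}) then identifies the limit with $(u,\strain,\p)$, and the convergence extends to the whole sequence.

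It remains to upgrade weak convergence in $H^{1}(0,T;\,\ldots)$ to strong convergence. The limit passage in~\eqref{e.4} actually holds with equality, so both discrete energies and discrete total dissipations converge to their continuous counterparts. Uniform convexity of the quadratic terms involving $\C(z)$ and $\ha(z)$ combined with convergence of norms yields strong convergence pointwise in time, and a uniform integrable majorant upgrades this to strong convergence in $L^{\infty}(0,T;\,H^{1}\times L^{2}\times L^{2})$. For the $L^{2}$-convergence of the time derivatives I would differentiate the equilibrium condition in time, test with the difference $(\dot u_{k}-\dot u,\dot\strain_{k}-\dot\strain,\dot\p_{k}-\dot\p)$, and use the monotonicity of the flow rule~\eqref{flow} together with the strong a.e.\ convergence of the coefficients to absorb the right-hand side. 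The main obstacle is precisely this step for the plastic rate $\dot\p_{k}$: the dissipation $\int_\Om d(z_k)|\dot\p_k|\,\di x$ is only of $L^{1}$-type, so the missing $L^{2}$-coercivity has to be extracted from the hardening term $\ha(z_{k})\p_{k}$ in the equilibrium equation, while the $z$-dependence of $d(\cdot)$ and $\ha(\cdot)$ is handled through the strong a.e.\ convergence of $z_{k}$ together with~\eqref{e.hp-bound}.
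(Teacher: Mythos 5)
Your overall scheme (a priori bounds from the discrete energy inequality, extraction of a weak limit, identification of the limit with the unique quasistatic evolution given $z$ via lower semicontinuity and the mutual-recovery/upper-estimate machinery, and finally an upgrade to strong convergence) is the right skeleton, and your first two steps match the paper's Lemma~\ref{l.A1} in spirit: the paper also obtains the $H^{1}(0,T;\cdot)$ a priori bound by testing the minimality at consecutive nodes with shifted competitors, and then proves $L^{\infty}(0,T;\cdot)$ strong convergence of the states via a Gronwall argument on the difference of equilibrium equations (rather than by convergence of energies, but that is a stylistic difference).

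The genuine gap is in the last step, the strong $L^{2}(0,T;\cdot)$ convergence of the time derivatives $(\dot u_k,\dot\strain_k,\dot\p_k)$. You propose to ``differentiate the equilibrium condition in time, test with the difference of rates, and use the monotonicity of the flow rule.'' But the monotonicity one has is between $\brho$ and $\dot\p$ — namely $\brho(t)\in d(z)\partial|\cdot|(\dot\p(t))$, which is exactly what the paper exploits in~\eqref{e.A2} to prove the $L^{\infty}$ convergence of the states — not between $\dot\brho$ and $\dot\p$. After differentiating in time, the cross term $\int_\Om(\dot\brho_k-\dot\brho)\cdot(\dot\p_k-\dot\p)\,\di x$ has no sign: for instance, on $\{\dot\p\neq 0\}$ one has $|\brho|=d(z)$, so $\dot\brho\perp\brho$, and no monotonicity in the rate variables follows. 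Acknowledging the obstacle (``the dissipation is only $L^1$-type, the coercivity must come from the hardening'') does not resolve it, because the problematic term is not the dissipation norm but the pairing of the differentiated multiplier against the plastic rate. What the paper actually does, in the proof of Proposition~\ref{p.3}, is to sidestep time-differentiating the flow rule altogether: it introduces the auxiliary purely quadratic incremental problems~\eqref{e.A5}--\eqref{e.A6} (no dissipation term), whose solutions $(\om_k,\bxi_k,\btheta_k)$ converge strongly by continuous dependence on data; it then derives a discrete \emph{inequality}~\eqref{e.A9} and a continuous \emph{equality}~\eqref{e.A14} relating $\dot\strain_k$, $\dot\p_k$ to $\bxi_k$, $\btheta_k$, with the latter equality obtained via the variational inequality at $t+h$ and a difference-quotient limit rather than by differentiation. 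A Minty-type $\limsup$ argument~\eqref{e.A19}--\eqref{e.A20} then closes the strong $L^2$ convergence. Without this auxiliary-problem device, or an equivalent way of avoiding the $\dot\brho$-vs-$\dot\p$ pairing, your final step does not go through.
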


\begin{proof}
See Appendix~\ref{appendixA}.
\end{proof}

As a corollary of Proposition~\ref{p.3} we infer the convergence of
 time-discrete TO minimizers of~\eqref{min3} to  time-continuous  TO
minimizers of ~\eqref{min2}.

\begin{corollary}[Convergence of time-discrete TO minimizers]\label{c.1}
Under the assumptions of Proposition~\ref{p.3}, let $z_{k} \in H^{1}(\Om; [0,1])$ be a sequence of minimizers of~\eqref{min3}. Then, there exists~$z \in H^{1}(\Om; [0,1])$ solution of~\eqref{min2} such that, up to a subsequence, $z_{k} \rightharpoonup z$ weakly in~$H^{1}(\Om)$.
\end{corollary}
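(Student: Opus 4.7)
The plan is to combine the Direct Method with Proposition~\ref{p.3} in a classical $\Gamma$-convergence style argument: lower semicontinuity of $\J_{k,\delta}$ along the minimizing sequence, together with a recovery-sequence argument provided by evaluating the discrete problems on any candidate competitor for the continuous problem.

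The first step is compactness. Testing the minimality of $z_k$ against the (admissible) constant competitor $\tilde z\equiv 1$, which gives a time-discrete quasistatic evolution bounded uniformly in $k$ by~\eqref{e.4}, one obtains a uniform bound on $\J_{k,\delta}(z_k,(u_i^k))$. Combining with the coercivity of the Modica--Mortola term and the fact that $0\le z_k\le 1$ a.e.\ yields a uniform bound on $\|z_k\|_{H^1(\Om)}$. By Rellich--Kondrachov, up to a subsequence $z_k\rightharpoonup z$ weakly in $H^1(\Om)$ and strongly in $L^q(\Om)$ for every $q<\infty$ (with $q<2n/(n-2)$ in higher dimensions), with $z\in H^1(\Om;[0,1])$. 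In particular, continuity of $\ell,d$ and of the coefficients of $\C,\ha$, together with their boundedness on $[0,1]$, give strong $L^q$-convergence of $\ell(z_k),d(z_k),\C(z_k),\ha(z_k)$ for every $q<\infty$. Proposition~\ref{p.3} then produces the corresponding time-continuous quasistatic evolution $(u,\strain,\p)$ given $z$, with the affine interpolants $(u_k,\strain_k,\p_k)$ defined by~\eqref{e.affine-interpolant} converging strongly to $(u,\strain,\p)$ in $H^1(0,T;H^1\times L^2\times L^2)$.

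The second step is the liminf inequality $\J_\delta(z,u(\cdot))\le\liminf_k\J_{k,\delta}(z_k,(u_i^k)_{i=0}^k)$. The Modica--Mortola contribution is weakly lower semicontinuous in $H^1(\Om)$ (gradient term by convexity, double-well by Fatou using the strong $L^q$ convergence). For the compliance terms, one rewrites the Riemann-type sums in~\eqref{e.target2} in terms of the affine interpolants $u_k,f_k,g_k$ and uses that $\ell(z_k)\to\ell(z)$ strongly in, say, $L^{p/(p-2)}(\Om)$, while $u_k\to u$ strongly in $H^1(0,T;H^1(\Om;\R^n))$ (hence in $C([0,T];H^1)$), $f_k\to f$ in $H^1(0,T;L^p)$, and $g_k\to g$ in $H^1(0,T;L^p(\Gamma_N))$; the products pass to the limit by H\"older's inequality, yielding in fact convergence (not just $\liminf$) of the compliance part.

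The third step is the recovery inequality. Fix any admissible competitor $\tilde z\in H^1(\Om)$ for~\eqref{min2}, which we may assume to satisfy $0\le\tilde z\le 1$ by the truncation argument exploited in Proposition~\ref{p.1}. Let $(\tilde u_i^k,\tilde\strain_i^k,\tilde\p_i^k)_{i=0}^k$ be the (unique) time-discrete quasistatic evolution given $\tilde z$, and $(\tilde u,\tilde\strain,\tilde\p)$ the time-continuous one. Applying Proposition~\ref{p.3} to the constant sequence $z_k\equiv\tilde z$ gives convergence of the affine interpolants, and, exactly as in the preceding paragraph, $\J_{k,\delta}(\tilde z,(\tilde u_i^k))\to\J_\delta(\tilde z,\tilde u(\cdot))$ (here the Modica--Mortola contribution is even constant in $k$). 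Using the minimality of $z_k$ for the discrete problem,
\[
\J_\delta(z,u(\cdot))\le\liminf_{k\to\infty}\J_{k,\delta}(z_k,(u_i^k))\le\limsup_{k\to\infty}\J_{k,\delta}(\tilde z,(\tilde u_i^k))=\J_\delta(\tilde z,\tilde u(\cdot)),
\]
which shows that $z$ is a minimizer of~\eqref{min2}.

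I expect the main obstacle to be the careful passage to the limit in the discrete compliance sums in~\eqref{e.target2}: one must match the piecewise-constant structure of $f_i^k,g_i^k,u_i^k$ with the continuous-time expression in~\eqref{e.target}, which is handled most cleanly by rewriting everything through the affine interpolants $f_k,g_k,u_k$ and using Proposition~\ref{p.3} together with the strong $L^q$-convergence of $\ell(z_k)$. Everything else is a by-now-standard Direct Method scheme.
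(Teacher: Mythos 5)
Your proof is correct and follows essentially the same route as the paper: compactness of $z_k$ from minimality and the uniform energy bound~\eqref{e.4}, passage to the limit in the state constraint via Proposition~\ref{p.3}, a liminf inequality for $\J_{k,\delta}$ along $z_k$, and a recovery sequence obtained by applying Proposition~\ref{p.3} to the constant sequence $z_k\equiv\tilde z$. The paper is somewhat more terse about the compactness step (it invokes~\eqref{e.4} and minimality directly rather than spelling out the constant competitor), but the logic is identical.
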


\begin{proof}
By  inequality~\eqref{e.4}, by the assumptions~\eqref{e.C}--\eqref{e.H}, and by the regularity of~$f$, $g$, and~$w$, the time-discrete evolutions are bounded
uniformly with respect to $k \in \mathbb{N}$. Hence, we
deduce from minimality~\eqref{e.mma} of~$z_{k}$ that~$z_{k}$ is bounded in~$H^{1}(\Om)$. Up to a subsequence, $z_{k} \rightharpoonup z$ weakly in~$H^{1}(\Om)$ and $z \in H^{1}(\Om; [0,1])$. In view of Proposition~\ref{p.3}, the time-discrete quasistatic evolution associated with~$z_{k}$ converges to~$(u, \strain, \p)$ in $H^{1}(0,T; H^{1}(\Om; \R^{n}) \times L^{2}(\Om; \M^{n}_{S}) \times L^{2}(\Om; \M^{n}_{D}))$, where $(u, \strain, \p)$ is the quasistatic evolution associated with~$z$. 

To show the minimality of~$z$, let us fix a competitor~$\hat{z} \in
H^{1}(\Om;[0,1])$ and consider the quasistatic evolution~$(\hat{u},
\hat{\strain}, \hat{\p})$ associated with~$\hat{z}$. For every~$k$, we
can construct the time-discrete quasistatic evolution associated
with~$\hat{z}$ according to Definition
\ref{d.discrete-evolution}. Let us denote by~$(\hat{u}_{k}, \hat{\strain}_{k}, \hat{\p}_{k})$ the piecewise affine functions
\begin{align*}
& \hat{u}_{k} (t) := \hat{u}^{k}_{i-1} + \frac{(t - t^{k}_{i-1})}{\tau_{k}} (\hat{u}^{k}_{i} - \hat{u}^{k}_{i-1}) \,, \qquad \hat{\strain}_{k}(t) := \hat{\strain}^{k}_{i-1} + \frac{(t - t^{k}_{i-1})}{\tau_{k}} (\hat{\strain}^{k}_{i} - \hat{\strain}^{k}_{i-1}) \,,\\
& \hat{\p}_{k} (t) := \hat{\p}^{k}_{i-1} + \frac{(t - t^{k}_{i-1})}{\tau_{k}} (\hat{\p}^{k}_{i} - \hat{\p}^{k}_{i-1}) \,,
\end{align*}
for $t \in [t^{k}_{i-1}, t^{k}_{i})$. In view of
Proposition~\ref{p.3}, we have that  $(\hat{u}_{k}, \hat{\strain}_{k},
\hat{\p}_{k})$ converges to $(\hat{u}, \hat{\strain}, \hat{\p})$ in
$H^{1}(0,T; H^{1}(\Om; \R^{n}) \times L^{2}(\Om; \M^{n}_{S}) \times
L^{2}(\Om; \M^{n}_{D}))$. By the minimality of~$z_{k}$ we have that
\begin{equation}\label{e.5-11}
\J_{k,\delta}(z_{k}, (u^{k}_{i})_{i=0}^{k} ) \leq \J_{k,\delta}( \hat{z}, (\hat{u}^{k}_{i})_{i=0}^{k})\,.
\end{equation}
Hence, passing to the liminf in~\eqref{e.5-11}  as~$k\to\infty$ we deduce that
\begin{align*}
\J_\delta(z, (u_{i})_{i=0}^{k}) \leq \liminf_{k\to \infty} \, \J_{k,\delta}(z_{k}, (u^{k}_{i})_{i=0}^{k}) \leq \liminf_{k \to \infty} \, \J_{k,\delta} ( \hat{z}, ( \hat{u}^{k}_{i})_{i=0}^{k}) =  \J_{\delta}  (\hat{z}, (\hat{u}_{i})_{i=0}^{k} ) \,.
\end{align*}
We conclude by the arbitrariness of~$\hat{z} \in H^{1}(\Om; [0,1])$.
\end{proof}

For the computation of the first-order optimality conditions for~\eqref{min2}, the time-discrete approximation introduced in~\eqref{e.target2}--\eqref{min3} is still insufficient, as the dissipation term~$\D$ is not differentiable. As in~\cite{Alm-Ste_20} (see also~\cite{delosReyes, Herzog2, Wachsmut2}), we define the regularized dissipation
\begin{align*}
&\D_{\gamma} (z, \qq) := \int_{\Om} d(z) h_{\gamma} ( \qq) \, \di x \,, \qquad \text{for every $\qq \in L^{2}(\Om; \M^{n}_{D})$}\,,\\
& h_{\gamma}(\q) := \sqrt{ | \q |^{2} + \frac{1}{\gamma^{2}} } - \frac{1}{\gamma} \qquad \text{for every $\q \in \M^{n}_{D}$}\,.
\end{align*}
In particular, $h_{\gamma} \in C^{\infty}(\M^{n}_{D})$ is convex and satisfies
\begin{align}
& |h_{\gamma}( \q_{1}) - h_{\gamma} (\q_{2}) | \leq | \q_{1} - \q_{2}| \,, \label{e.h1}\\
& | \nabla_{\q} h_{\gamma} (\q_{1}) - \nabla_{\q} h_{\gamma}(\q_{2}) | \leq 2\gamma | \q_{1} - \q_{2}| \,.  \label{e.h2}
\end{align}

Accordingly, we formulate the concept of approximate time-discrete quasistatic evolution as follows.

\begin{definition}[Approximate time-discrete quasistatic
  evolution given $z$]\label{d.approximate-quasistatic}
Let $z \in L^\infty(\Om)$ be given  and let
$f^{k}_{i}, g^{k}_{i}, w^{k}_{i}$ be defined as in~\eqref{e.fgw}. We say that $(u_{i}, \strain_{i}, \p_{i})_{i=0}^{k} \in \big( H^{1}(\Om; \R^{n}) \times L^{2}(\Om; \M^{n}_{S}) \times L^{2}(\Om; \M^{n}_{D})\big)^{k+1}$ is an \emph{approximate time-discrete quasistatic evolution} if $(u_{0}, \strain_{0}, \p_{0}) = (0,0,0)$ and the following holds: for every $i = 1,\ldots, k$, $(u_{i}, \strain_{i}, \p_{i}) \in \A(w^{k}_{i})$ and
\begin{equation}\label{e.5}
\E_{k} ( t^{k}_{i}, z, u_{i}, \strain_{i}, \p_{i}) + \D_{\gamma}(z, \p_{i} - \p_{i-1}) \leq \E_{k} ( t^{k}_{i}, z, u, \strain, \p) + \D_{\gamma}(z, \p - \p_{i-1})
\end{equation}
for every $(u, \strain, \p) \in \A(w^{k}_{i})$.
\end{definition}

%\begin{remark}\label{r.3}
As for a time-discrete quasistatic evolutions, for every $z \in L^{\infty}(\Om)$ and every $k \in \mathbb{N}$ an approximate time-discrete evolution $(u_{i}, \strain_{i}, \p_{i})_{i=0}^{k}$ is uniquely determined by iteratively solving the minimum problems
\begin{equation}\label{er.3}
\min \, \{ \E_{k}(t^{k}_{i}, z, u, \strain, \p) + \D_{\gamma} (z, \p - \p_{i-1}): \, (u, \strain , \p) \in \A(w^{k}_{i}) \}
\end{equation}
for $i \geq 1$, where we have set $(u_{0} , \strain_{0}, \p_{0}) = (0,0,0)$.
%\end{remark}
The approximate time-discrete TO problem reads as
\begin{align}\label{min4}
\min\, \{\J_{k,\delta}(z, (u_{i})_{i=0}^{k}): \ &\text{ $z \in H^{1}(\Om)$ }
\\
&
 \text{and $(u_{i}, \strain_{i}, \p_{i})_{i=0}^{k} \in \big( H^{1}(\Om, \R^{n}) \times L^{2}(\Om; \M^{n}_{S}) \times L^{2}(\Om; \M^{n}_{D}) \big) ^{k+1}$ } \nonumber
\\
&
 \text{is an approximate time-discrete quasistatic evolution given~$z$} \} \,. \nonumber
\end{align}

\begin{proposition}[Existence, approximate time-discrete]\label{p.4}
 The approximate
time-discrete \emph{TO} \linebreak problem \eqref{min4} admits a solution. In particular, every solution~$z$ satisfies $0 \leq z \leq 1$ almost everywhere in~$\Om$. 
\end{proposition}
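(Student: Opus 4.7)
The plan is to proceed by the Direct Method, mimicking the strategy used in the proof of Proposition~\ref{p.2}, with the nonsmooth dissipation $\D$ replaced by the smooth functional $\D_\gamma$. Let $z_j \in H^1(\Om)$ be a minimizing sequence for~\eqref{min4}. Since $\mu$, $\lambda$, $h$, $d$, and $\ell$ are constant on $\{z \leq 0\}$ and $\{z \geq 1\}$ (compare with~\eqref{min2},~\eqref{min3}), the truncation $z_j \vee 0 \wedge 1$ yields a competitor with the same elastoplastic evolution and a no-greater value of the Modica--Mortola term; we may therefore assume $z_j \in [0,1]$ almost everywhere. The Modica--Mortola gradient term in $\J_{k,\delta}$ forces $z_j$ to be bounded in $H^1(\Om)$, so along a (not relabelled) subsequence $z_j \rightharpoonup z$ weakly in $H^1(\Om)$, strongly in $L^q(\Om)$ for every $q < \infty$, with $z \in H^1(\Om;[0,1])$.

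Let $(u^j_i, \strain^j_i, \p^j_i)_{i=0}^k$ denote the approximate time-discrete quasistatic evolution associated with $z_j$, uniquely determined by~\eqref{er.3}. Testing~\eqref{e.5} with $(w^k_i, \e w^k_i, 0)$ and exploiting the bounds~\eqref{e.C}--\eqref{e.H} together with the regularity~\eqref{eq:data} of $f$, $g$, $w$, one derives an energy inequality analogous to~\eqref{e.4} (with $\D_\gamma$ in place of $\D$ and an extra constant coming from $h_\gamma(0) = 0$ and the Lipschitz bound~\eqref{e.h1}). This yields a uniform bound of $(u^j_i, \strain^j_i, \p^j_i)$ in $H^1(\Om;\R^n) \times L^2(\Om;\M^n_S) \times L^2(\Om;\M^n_D)$ for every $i = 0,\ldots,k$, and hence weak subsequential limits $(u_i, \strain_i, \p_i)$ in the same spaces, with $(u_i, \strain_i, \p_i) \in \A(w^k_i)$ by linearity of the constraint.

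The crux is to show that $(u_i, \strain_i, \p_i)_{i=0}^{k}$ is itself an approximate time-discrete quasistatic evolution given $z$. I would argue recursively on $i$, as in Proposition~\ref{p.2}. For $i = 1$, the triple $(u^j_1, \strain^j_1, \p^j_1)$ solves the strictly convex minimum problem~\eqref{er.3} with $(u_0, \strain_0, \p_0) = (0,0,0)$. Passing to the limit in~\eqref{e.5} at $i=1$ is straightforward for the quadratic energy $\E_k$ (weakly lower semicontinuous, with the $z_j$-dependent coefficients handled via strong $L^q$ convergence of $z_j$), and for $\D_\gamma$ it follows from the convexity and continuity of $h_\gamma$ on $\M^n_D$ combined with the strong convergence $d(z_j) \to d(z)$. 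Matching the limit of the left-hand side energies to the right-hand side (tested with the strong limit candidate competitor) gives that $(u_1, \strain_1, \p_1)$ realises the minimum, and the equality of limiting energies upgrades weak to strong convergence in $H^1 \times L^2 \times L^2$ by uniform convexity of $\E_k + \D_\gamma$ in $(\strain, \p)$ together with Korn's inequality for $u$. The same argument is then run inductively for $i \geq 2$, starting from the strong convergence of $(u^j_{i-1}, \strain^j_{i-1}, \p^j_{i-1})$.

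Finally, the lower semicontinuity of $\J_{k,\delta}$ along $z_j \rightharpoonup z$ in $H^1(\Om)$ and $(u^j_i) \to (u_i)$ strongly in $H^1(\Om;\R^n)$ (the gradient term is $H^1$-weakly lower semicontinuous, the double-well term is continuous by strong $L^4$-convergence, the compliance terms are continuous by strong $H^1$- and $L^p$-convergence) implies that $z$ is a minimiser of~\eqref{min4}. The bound $0 \leq z \leq 1$ a.e.\ follows from the truncation argument at the outset, as in Proposition~\ref{p.2}. The principal obstacle is ensuring the recursive passage to the limit: namely, securing strong convergence at step $i-1$ so that the dissipation increment $\p^j_i - \p^j_{i-1}$ passes to $\p_i - \p_{i-1}$ correctly inside $\D_\gamma$, which is what forces the strong-convergence upgrade via the energy identity at each step.
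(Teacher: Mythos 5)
Your proof is correct and follows essentially the same route as the paper: the paper's proof of Proposition~\ref{p.4} is simply the instruction to repeat the Direct Method argument of Proposition~\ref{p.2} with the regularized dissipation $\D_\gamma$ and the minimality~\eqref{er.3}, which is exactly what you have spelled out (truncation to $[0,1]$, $H^1$-bound on $z_j$ from the Modica--Mortola term, uniform bounds on the states via the energy inequality, recursive passage to the limit with strong-convergence upgrade through the energy identity, and lower semicontinuity of $\J_{k,\delta}$). The one minor deviation is your choice of test competitor $(w^k_i, \e w^k_i, 0)$ for the a~priori bound, which produces a non-telescoping recursion in $\|\p^j_{i-1}\|_{L^2}$ rather than the telescoping inequality~\eqref{e.7.} the paper derives from the competitor $(u^j_{i-1} + w^k_i - w^k_{i-1}, \strain^j_{i-1} + \e(w^k_i - w^k_{i-1}), \p^j_{i-1})$; since $k$ is fixed this is harmless and gives the same uniform bound.
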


\begin{proof}
Repeat the steps of the proof of Proposition~\ref{p.2} taking into
account minimality~\eqref{er.3}.
\end{proof}

 We aim now at showing the convergence of solutions to the approximate
time-discrete TO  problem~\ref{min4} to solutions of the time-discrete
TO problem~\eqref{min3}. To this end, we first have to discuss the convergence of approximate time-discrete quasistatic evolutions to a time-discrete quasistatic evolution as the regularization parameter~$\gamma$ tends to~$+\infty$. This is the subject of the following proposition.

\begin{proposition}[Convergence of approximate time-discrete
  quasistatic evolutions]\label{p.5}
Let $k \in \mathbb{N}$ be fixed and let~$z_{\gamma}, z \in H^{1}(\Om; [0,1])$ be such that $z_{\gamma} \rightharpoonup z$ weakly in~$H^{1}(\Om)$ as $\gamma \to +\infty$. Let us denote by~$(u^{\gamma}_{i}, \strain^{\gamma}_{i}, \p^{\gamma}_{i})_{i=0}^{k} \in \big(H^{1}(\Om; \R^{n})\times L^{2}(\Om; \M^{n}_{S}) \times L^{2}(\Om; \M^{n}_{D}) \big)^{k+1}$ the approximate time-discrete quasistatic evolution associated with~$z_{\gamma}$ and by $(u_{i}, \strain_{i}, \p_{i})_{i=0}^{k} \in \big(H^{1}(\Om; \R^{n})\times L^{2}(\Om; \M^{n}_{S}) \times L^{2}(\Om; \M^{n}_{D}) \big)^{k+1}$ the time-discrete quasistatic evolution associated with~$z$. Then, $(u^{\gamma}_{i}, \strain^{\gamma}_{i}, \p^{\gamma}_{i})_{i=0}^{k}$ converges to $(u_{i}, \strain_{i}, \p_{i})_{i=0}^{k}$ in $\big( H^{1}(\Om; \R^{n})\times L^{2}(\Om; \M^{n}_{S}) \times L^{2}(\Om; \M^{n}_{D}) \big)^{k+1}$ as~$\gamma\to + \infty$.
\end{proposition}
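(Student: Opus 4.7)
The plan is to proceed by recursion on the time index $i$. The base case $i=0$ is immediate, since $(u_0^\gamma,\strain_0^\gamma,\p_0^\gamma)=(0,0,0)=(u_0,\strain_0,\p_0)$ for every $\gamma$. Assume inductively that $(u_{i-1}^\gamma,\strain_{i-1}^\gamma,\p_{i-1}^\gamma)\to (u_{i-1},\strain_{i-1},\p_{i-1})$ strongly in $H^1(\Om;\R^n)\times L^2(\Om;\M^n_S)\times L^2(\Om;\M^n_D)$ as $\gamma\to+\infty$. I will show the analogous strong convergence at step $i$ by first extracting weak limits, then identifying them via minimality of the time-discrete step, and finally upgrading to strong convergence using the quadratic nature of $\E_k$.

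Testing the minimality~\eqref{er.3} of $(u_i^\gamma,\strain_i^\gamma,\p_i^\gamma)$ against the admissible triple $(w_i^k,\e w_i^k,0)$ and using $h_\gamma\geq 0$, the uniform ellipticity/boundedness bounds \eqref{e.C}--\eqref{e.H}, Korn's inequality, and the inductive boundedness of $\p_{i-1}^\gamma$ in $L^2$, I obtain uniform bounds on $(u_i^\gamma,\strain_i^\gamma,\p_i^\gamma)$ in $H^1\times L^2\times L^2$. Extract a (not relabeled) subsequence converging weakly to some $(\tilde u,\tilde\strain,\tilde\p)\in\A(w_i^k)$. Since $z_\gamma\rightharpoonup z$ in $H^1(\Om)$, up to a further subsequence $z_\gamma\to z$ strongly in every $L^q(\Om)$ and almost everywhere; hence the continuity of $\C$, $\ha$, $\ell$, $d$, together with their boundedness, yields $\C(z_\gamma)\to\C(z)$, $\ha(z_\gamma)\to\ha(z)$, $\ell(z_\gamma)\to\ell(z)$, $d(z_\gamma)\to d(z)$ in any $L^q$ by dominated convergence.

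The heart of the proof is passing to the limit in the minimality inequality~\eqref{e.5}
\begin{equation*}
\E_k(t_i^k,z_\gamma,u_i^\gamma,\strain_i^\gamma,\p_i^\gamma)+\D_\gamma(z_\gamma,\p_i^\gamma-\p_{i-1}^\gamma)\leq \E_k(t_i^k,z_\gamma,\hat u,\hat\strain,\hat\p)+\D_\gamma(z_\gamma,\hat\p-\p_{i-1}^\gamma)
\end{equation*}
for arbitrary $(\hat u,\hat\strain,\hat\p)\in\A(w_i^k)$. For the right-hand side, the strong convergence of $\p_{i-1}^\gamma$ together with the pointwise estimate $|h_\gamma(\q)-|\q||\leq 1/\gamma$ from the definition of $h_\gamma$ and the strong convergence $d(z_\gamma)\to d(z)$ imply
\begin{equation*}
\D_\gamma(z_\gamma,\hat\p-\p_{i-1}^\gamma)\to \D(z,\hat\p-\p_{i-1}),
\end{equation*}
and the remaining terms converge by the Mosco-type convergence of the coefficients. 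For the left-hand side, I use the joint weak lower semicontinuity of $(u,\strain,\p)\mapsto\E_k(t_i^k,z,u,\strain,\p)$ under the simultaneous strong convergence of $\C(z_\gamma)$, $\ha(z_\gamma)$, $\ell(z_\gamma)$, together with the bound $h_\gamma(\q)\geq|\q|-1/\gamma$ and weak lower semicontinuity of $\qq\mapsto\int_\Om d(z)|\qq|\,\di x$, to conclude
\begin{equation*}
\E_k(t_i^k,z,\tilde u,\tilde\strain,\tilde\p)+\D(z,\tilde\p-\p_{i-1})\leq \E_k(t_i^k,z,\hat u,\hat\strain,\hat\p)+\D(z,\hat\p-\p_{i-1}).
\end{equation*}
Hence $(\tilde u,\tilde\strain,\tilde\p)$ solves the defining minimum problem for the time-discrete quasistatic step at $t_i^k$ given $z$ starting from $\p_{i-1}$. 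By the uniqueness of this solution noted after~\eqref{e.mma}, $(\tilde u,\tilde\strain,\tilde\p)=(u_i,\strain_i,\p_i)$, and a standard subsequence argument gives weak convergence of the whole sequence.

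To upgrade to strong convergence, I specialize the limiting procedure by testing the minimality of $(u_i^\gamma,\strain_i^\gamma,\p_i^\gamma)$ against the competitor $(u_i,\strain_i,\p_i)$ itself: taking the $\limsup$ and comparing with the already established $\liminf$ yields $\E_k(t_i^k,z_\gamma,u_i^\gamma,\strain_i^\gamma,\p_i^\gamma)\to\E_k(t_i^k,z,u_i,\strain_i,\p_i)$ and $\D_\gamma(z_\gamma,\p_i^\gamma-\p_{i-1}^\gamma)\to\D(z,\p_i-\p_{i-1})$. The linear forcing terms already converge by weak convergence, so the two quadratic forms $\int\C(z_\gamma)\strain_i^\gamma\cdot\strain_i^\gamma\,\di x$ and $\int\ha(z_\gamma)\p_i^\gamma\cdot\p_i^\gamma\,\di x$ converge to their limit counterparts. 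Expanding $\int\C(z_\gamma)(\strain_i^\gamma-\strain_i)\cdot(\strain_i^\gamma-\strain_i)\,\di x$ and using the strong convergence of $\C(z_\gamma)$ to pass to the limit in the cross terms, the uniform ellipticity~\eqref{e.C} forces $\strain_i^\gamma\to\strain_i$ strongly in $L^2$; an identical argument with \eqref{e.H} gives $\p_i^\gamma\to\p_i$ strongly in $L^2$. Finally, $\e u_i^\gamma=\strain_i^\gamma+\p_i^\gamma\to\e u_i$ strongly in $L^2$ combined with the Dirichlet condition $u_i^\gamma-u_i=0$ on $\Gamma_D$ and Korn's inequality yields $u_i^\gamma\to u_i$ strongly in $H^1$, completing the induction.

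I expect the main obstacle to be the passage to the limit in the dissipation term, which requires a joint argument on the two independent parameters $\gamma$ and $z_\gamma$: the quantitative estimate $|h_\gamma(\q)-|\q||\leq 1/\gamma$ provides uniform closeness of $\D_\gamma(z_\gamma,\cdot)$ to $\D(z_\gamma,\cdot)$, but l.s.c.\ of $\D(z_\gamma,\cdot)$ along only weakly convergent $\p^\gamma$ has to be extracted from the strong convergence of $d(z_\gamma)$ in $L^q$, which in turn relies crucially on the compact embedding $H^1(\Om)\hookrightarrow L^q(\Om)$.
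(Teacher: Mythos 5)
Your proof is correct and follows essentially the same strategy as the paper: a recursive identification of the weak limit via minimality and lower semicontinuity of $\E_k$ and of the dissipation (with the uniform estimate $|h_\gamma(\q)-|\q||\leq 1/\gamma$ controlling the regularization, and the strong $L^q$ convergence of $\C(z_\gamma)$, $\ha(z_\gamma)$, $d(z_\gamma)$, $\ell(z_\gamma)$ furnished by compactness), followed by an upgrade to strong convergence through convergence of the energies and uniform ellipticity. The only organizational difference is that the paper first derives a uniform a~priori bound on $(u^\gamma_i,\strain^\gamma_i,\p^\gamma_i)$ through the cumulative telescoping inequality~\eqref{e.7.}, whereas you obtain the bound step by step from the inductive hypothesis and a direct competitor test; both are valid.
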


\begin{proof}
By minimality of~$(u^{\gamma}_{i}, \strain^{\gamma}_{i}, \p^{\gamma}_{i})$ we have that
\begin{align}\label{e.6}
& \vphantom{\int} \E_{k} (  t^{k}_{i}, z_{ \gamma}, u^{\gamma}_{i}, \strain^{\gamma}_{i}, \p^{\gamma}_{i}) + \D_{\gamma}(z_{ \gamma}, \p^{\gamma}_{i} - \p^{\gamma}_{i-1}) 
\\
&
\vphantom{\int}  \leq \E_{k} (  t^{k}_{i},z_{\gamma}, u^{\gamma}_{i-1} + w^{k}_{i} - w^{k}_{i-1}, \strain^{\gamma}_{i-1} + \e w^{k}_{i} - \e w^{k}_{i-1} , \p^{\gamma}_{i-1}) \nonumber
\\
&
=  \E_{k} (  t^{k}_{i-1}, z_{ \gamma}, u^{\gamma}_{i-1}, \strain^{\gamma}_{i-1}, \p^{\gamma}_{i-1}) + \int_{\Om} \C(z_{\gamma}) \strain^{\gamma}_{i-1} {\, \cdot\,} \e(w^{k}_{i} - w^{k}_{i-1}) \, \di x \nonumber
\\
& 
\qquad + \frac12 \int_{\Om} \C (z_{\gamma})  \e(w^{k}_{i} - w^{k}_{i-1}){\, \cdot\,} \e(w^{k}_{i} - w^{k}_{i-1}) \, \di x \nonumber
\\
&
\qquad - \int_{\Om} \ell(z_{\gamma}) (f^{k}_{i} - f^{k}_{i-1}) {\, \cdot\,} u^{\gamma}_{i-1}\, \di x - \int_{\Om} \ell(z_{\gamma})  f^{k}_{i} {\, \cdot\,} (w^{k}_{i} - w^{k}_{i-1}) \, \di x \nonumber
\\
&
\qquad - \int_{\Gamma_{N}} (g^{k}_{i} - g^{k}_{i-1}) {\, \cdot\,} u^{\gamma}_{i-1} \, \di \HH^{n-1} - \int_{\Gamma_{N}} g^{k}_{i} {\, \cdot\,} (w^{k}_{i} - w^{k}_{i-1}) \, \di \HH^{n-1} \nonumber
\end{align}
Adding the term~$\D_{\gamma} (z_{ \gamma}, \p^{\gamma}_{i-1} - \p^{\gamma}_{i-2})$ to both sides of~\eqref{e.6} and repeating the previous argument for every~$i$, we deduce that
\begin{align}\label{e.7.}
 \E_{k} &(  t^{k}_{i}, z_{\gamma},u^{\gamma}_{i}, \strain^{\gamma}_{i}, \p^{\gamma}_{i}) + \sum_{j=1}^{i} \D_{\gamma}(z_{ \gamma}, \p^{\gamma}_{j} - \p^{\gamma}_{j-1}) 
\\
&
\leq \sum_{j=1}^{i}  \int_{\Om} \C(z_{ \gamma}) \strain^{\gamma}_{j-1} {\, \cdot\,} \e(w^{k}_{j} - w^{k}_{j-1}) \, \di x + \frac12 \int_{\Om} \C (z_{ \gamma})  \e(w^{k}_{j} - w^{k}_{j-1}){\, \cdot\,} \e(w^{k}_{j} - w^{k}_{j-1}) \, \di x \nonumber
\\
&
\qquad - \int_{\Om} \ell(z_{ \gamma}) (f^{k}_{j} - f^{k}_{j-1}) {\, \cdot\,} u^{\gamma}_{j-1}\, \di x - \int_{\Om} \ell(z_{\gamma})  f^{k}_{j} {\, \cdot\,} (w^{k}_{j} - w^{k}_{j-1}) \, \di x \nonumber
\\
&
\qquad - \int_{\Gamma_{N}} (g^{k}_{j} - g^{k}_{j-1}) {\, \cdot\,} u^{\gamma}_{j-1} \, \di \HH^{n-1} - \int_{\Gamma_{N}} g^{k}_{j} {\, \cdot\,} (w^{k}_{j} - w^{k}_{j-1}) \, \di \HH^{n-1} \,,\nonumber
\end{align}
which implies that~$(u^{\gamma}_{i}, \strain^{\gamma}_{i}, \p^{\gamma}_{i})$ is bounded in~$H^{1}(\Om; \R^{n}) \times L^{2}(\Om; \M^{n}_{S}) \times L^{2}(\Om; \M^{n}_{D})$ uniformly w.r.t.~$i$ and~$\gamma$. Arguing as in Proposition~\ref{p.2}, we can prove recursively that~$(u^{\gamma}_{i}, \strain^{\gamma}_{i}, \p^{\gamma}_{i})$ converges to~$(u_{i}, \strain_{i}, \p_{i})$ in $H^{1}(\Om; \R^{n})\times L^{2}(\Om; \M^{n}_{S})\times L^{2}(\Om; \M^{n}_{D})$ as $\gamma\to +\infty$, and~$(u_{i}, \strain_{i}, \p_{i})$ satisfies~\eqref{e.3} for $i=1, \ldots, k$. This concludes the proof of the proposition.
\end{proof}

As a corollary of Proposition~\ref{p.5} we obtain the convergence of solutions of the approximate
time-discrete TO  problem~\eqref{min4} to solutions of the time-discrete
TO problem~\eqref{min3}. 

\begin{corollary}[Convergence of approximate time-discrete TO minimizers]\label{c.2}
Let~$k \in \mathbb{N}$ and let $z_{\gamma} \in H^{1}(\Om; [0,1])$ be a sequence of solutions of~\eqref{min4}. Then, there exists~$z \in H^{1}(\Om; [0,1])$ solution of~\eqref{min3} such that, up to a subsequence, $z_{\gamma} \rightharpoonup z$ weakly in~$H^{1}(\Om)$.
\end{corollary}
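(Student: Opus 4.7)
The plan is to mirror the structure of the proof of Corollary~\ref{c.1}, replacing Proposition~\ref{p.3} by Proposition~\ref{p.5}. The three ingredients I need are: a uniform $H^1$-bound on the minimizers $z_\gamma$, a liminf inequality for the target $\J_{k,\delta}$ along a weakly converging subsequence of $z_\gamma$'s, and a construction of a recovery sequence for any competitor~$\hat z$.

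First I would establish the uniform bound on $z_\gamma$ in $H^1(\Omega)$. By \eqref{e.7.} applied with $z_\gamma\in [0,1]$, the approximate time-discrete quasistatic evolution $(u^\gamma_i,\strain^\gamma_i,\p^\gamma_i)$ is bounded in $\bigl(H^1(\Omega;\R^n)\times L^2(\Omega;\M^n_S)\times L^2(\Omega;\M^n_D)\bigr)^{k+1}$ uniformly with respect to $\gamma$ and to $z_\gamma \in H^1(\Omega;[0,1])$. In particular, the first two lines of $\J_{k,\delta}(z_\gamma,(u^\gamma_i)_{i=0}^k)$ (the compliance-type terms) are uniformly bounded in $\gamma$. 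Testing the minimality of $z_\gamma$ against a fixed competitor $\hat z\in H^1(\Omega;[0,1])$ — whose approximate time-discrete quasistatic evolution is uniformly bounded as well — yields an upper bound on $\J_{k,\delta}(z_\gamma,(u^\gamma_i)_{i=0}^k)$, hence on the Modica--Mortola term $\int_\Omega \tfrac{\delta}{2}|\nabla z_\gamma|^2 + \tfrac{z_\gamma^2(1-z_\gamma)^2}{2\delta}\,\di x$. Therefore $z_\gamma$ is bounded in $H^1(\Omega)$ and, up to a subsequence, $z_\gamma \rightharpoonup z$ weakly in $H^1(\Omega)$ with $z\in H^1(\Omega;[0,1])$.

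Next I would pass to the limit in the state. Since $z_\gamma \rightharpoonup z$ weakly in $H^1(\Omega)$, Proposition~\ref{p.5} applies and gives strong convergence of $(u^\gamma_i,\strain^\gamma_i,\p^\gamma_i)_{i=0}^k$ to the time-discrete quasistatic evolution $(u_i,\strain_i,\p_i)_{i=0}^k$ associated with $z$ in $\bigl(H^1(\Omega;\R^n)\times L^2(\Omega;\M^n_S)\times L^2(\Omega;\M^n_D)\bigr)^{k+1}$. The compliance terms in $\J_{k,\delta}$ then pass to the limit by strong convergence and continuity of $\ell$, while the Modica--Mortola term is handled by the standard weak lower semicontinuity of $\int|\nabla z|^2\,\di x$ together with the $L^2$-strong convergence $z_\gamma\to z$ (from the compact embedding $H^1(\Omega)\hookrightarrow L^2(\Omega)$) for the double-well part. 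Combining these gives
\begin{equation*}
\J_{k,\delta}\bigl(z,(u_i)_{i=0}^k\bigr) \leq \liminf_{\gamma\to+\infty} \J_{k,\delta}\bigl(z_\gamma,(u^\gamma_i)_{i=0}^k\bigr).
\end{equation*}

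Finally, I would verify the minimality of $z$ for \eqref{min3}. Fix an arbitrary competitor $\hat z\in H^1(\Omega;[0,1])$ and denote by $(\hat u_i,\hat\strain_i,\hat\p_i)_{i=0}^k$ its time-discrete quasistatic evolution and by $(\hat u^\gamma_i,\hat\strain^\gamma_i,\hat\p^\gamma_i)_{i=0}^k$ its approximate time-discrete quasistatic evolution. Applying Proposition~\ref{p.5} to the constant sequence $z_\gamma\equiv \hat z$, we obtain strong convergence $(\hat u^\gamma_i,\hat\strain^\gamma_i,\hat\p^\gamma_i)\to (\hat u_i,\hat\strain_i,\hat\p_i)$, hence $\J_{k,\delta}(\hat z,(\hat u^\gamma_i)_{i=0}^k) \to \J_{k,\delta}(\hat z,(\hat u_i)_{i=0}^k)$. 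From the minimality $\J_{k,\delta}(z_\gamma,(u^\gamma_i)_{i=0}^k) \leq \J_{k,\delta}(\hat z,(\hat u^\gamma_i)_{i=0}^k)$ and the previous liminf inequality we deduce $\J_{k,\delta}(z,(u_i)_{i=0}^k) \leq \J_{k,\delta}(\hat z,(\hat u_i)_{i=0}^k)$, and the arbitrariness of $\hat z$ yields the claim.

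I do not expect any genuine obstacle here: the nontrivial content is packaged into Proposition~\ref{p.5}, and the remaining work is essentially a Direct Method argument exactly as in Corollary~\ref{c.1}. The only point requiring a little attention is obtaining the uniform $H^1$-bound on $z_\gamma$ independently of~$\gamma$ before being allowed to apply Proposition~\ref{p.5}, which is handled by the fixed-competitor argument above.
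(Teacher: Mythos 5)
Your proposal is correct and takes essentially the same route as the paper: uniform boundedness from~\eqref{e.7.} and minimality, then Proposition~\ref{p.5} for strong convergence of the states, then lower semicontinuity of~$\J_{k,\delta}$ combined with Proposition~\ref{p.5} applied to the constant competitor sequence to conclude minimality of the limit. You have merely spelled out the recovery-sequence step that the paper leaves compressed.
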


\begin{proof}
Repeating the argument of~\eqref{e.7.}, we infer that the approximate
time-discrete quasistatic evolution $(u^{\gamma}_{i},
\strain^{\gamma}_{i}, \p^{\gamma}_{i})_{i=0}^{k}$ corresponding
to~$z_{\gamma}$ is bounded in $ \big( H^{1}(\Om;\R^{n}) \times
L^{2}(\Om; \M^{n}_{S}) \times L^{2}(\Om; \M^{n}_{D})\big) ^{k+1}$. By
minimality, also~$z_{\gamma}$ is bounded in~$H^{1}(\Om)$ and weakly
converges to some~$z$ in~$H^{1}(\Om)$, with~$0 \leq z \leq 1$
almost everywhere. By Proposition~\ref{p.5}, we have that $(u^{\gamma}_{i}, \strain^{\gamma}_{i}, \p^{\gamma}_{i})_{i=0}^{k} \to (u_{i}, \strain_{i}, \p_{i})$ in~$H^{1}(\Om;\R^{n}) \times L^{2}(\Om; \M^{n}_{S}) \times L^{2}(\Om; \M^{n}_{D})$ as~$\gamma\to+\infty$, where~$(u_{i}, \strain_{i}, \p_{i})_{i=0}^{k}$ is the time-discrete quasistatic evolution corresponding to~$z$. From the lower semicontinuity of~$\J_{k, \delta}$ and from Proposition~\ref{p.5} we also deduce that~$z$ solves~\eqref{min3}.
\end{proof}

%%%%%%%%%%%%%%%%%%%%%%%%%%%%%%%%%%%%%%%%%%%%%%%%%%%%%%%%%%%%%%
\section{Sharp-interface limit $\delta \to 0$}
\label{s.gamma}

We prove in this section that the sharp-interface limit $\delta \to 0$
can be rigorously ascertained. This check is performed below in the
time-continuous case of quasistatic evolutions. An analogous argument
could be developed in the case of time-discrete and approximate
time-discrete quasistatic evolutions.

Let us start by recording that the set
of quasistatic evolution is closed with respect to the convergence of
the phase field.

 \begin{proposition}[Convergence of quasistatic evolutions]\label{p.10} Let $z_{m}, z \in L^\infty(\Om; [0,1])$ be such that $z_{m}
\to z$ strongly in~$L^{1}(\Om)$. For every~$m$, let
$(u_m(\cdot), \strain_m(\cdot),\p_m(\cdot))$ be the quasistatic evolution associated with~$z_{m}$ and let
$(u(\cdot), \strain (\cdot), \p(\cdot))$ be the quasistatic evolution
associated with~$z$.
Then,~$(u_{m}, \strain_{m}, \p_{m})$ converges to $(u, \strain, \p)$ in $H^{1} (0,T; H^{1}(\Om; \R^{n}) \times L^{2}(\Om; \M^{n}_{S}) \times L^{2}(\Om; \M^{n}_{D}))$.
\end{proposition}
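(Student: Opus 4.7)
The plan is to mimic the argument of Proposition~\ref{p.3} in the time-continuous setting, leveraging the strong $L^{1}$ convergence $z_{m}\to z$ to pass to the limit in the energetic formulation of Definition~\ref{d.quasistatic}. First I would derive uniform a priori estimates by writing the energy balance \eqref{e.2} for $(u_{m},\strain_{m},\p_{m})$, using the ellipticity bounds \eqref{e.C}--\eqref{e.H}, the positivity of $d$ from \eqref{e.hp-bound}, and the regularity of $(f,g,w)$ in \eqref{eq:data}. A Gronwall argument then yields a uniform bound of $(u_{m},\strain_{m},\p_{m})$ in $L^{\infty}(0,T;H^{1}\times L^{2}\times L^{2})$ together with a uniform bound on $\mathcal{V}([0,T];z_{m},\p_{m}(\cdot))$, and the classical elastoplasticity theory recalled after Definition~\ref{d.quasistatic} upgrades this to a uniform $H^{1}$-in-time bound.

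Next I identify the limit. Since $z_{m}\to z$ in $L^{1}(\Om)$ with $z_{m},z\in[0,1]$, dominated convergence gives the strong $L^{r}$ convergence of $\C(z_{m})$, $\ha(z_{m})$, $\ell(z_{m})$, and $d(z_{m})$ for every $r\in[1,\infty)$. Helly's Selection Principle applied to $\p_{m}(\cdot)$ produces a pointwise-in-$t$ weak $L^{2}$ limit $\p(t)$; fixing $t\in[0,T]$ and extracting $u_{m}(t)\rightharpoonup u(t)$ in $H^{1}$ and $\strain_{m}(t)\rightharpoonup\strain(t)$ in $L^{2}$, I test \eqref{e.1} for $(u_{m}(t),\strain_{m}(t),\p_{m}(t))$ against competitors $(\hat u + u_{m}(t)-u(t),\hat\strain + \strain_{m}(t)-\strain(t),\hat\p + \p_{m}(t)-\p(t))\in\A(w(t))$, as in the proof of Proposition~\ref{p.1}. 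The quadratic structure of $\E$ together with the strong convergence of the coefficients allows the passage to the limit and recovers \eqref{e.1} for $(u(t),\strain(t),\p(t))$, and uniqueness of the minimizer then promotes the convergence to the whole sequence at every $t$. The $\leq$ inequality in the energy balance \eqref{e.2} follows by lower semicontinuity of $\E(t,\cdot)$ and of the total variation $\mathcal{V}([0,t];\cdot,\cdot)$, while the opposite inequality comes from stability via \cite[Prop.~5.7]{Mielke05} as in Proposition~\ref{p.1}. By uniqueness of quasistatic evolutions, $(u,\strain,\p)$ is the evolution associated with $z$.

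The main obstacle is upgrading the pointwise-in-$t$ weak convergence to strong convergence in $H^{1}(0,T;H^{1}\times L^{2}\times L^{2})$. Matching of the energy balance in the limit forces convergence of the $\C(z_{m})$- and $\ha(z_{m})$-induced squared norms of $(\strain_{m}(t),\p_{m}(t))$, which, combined with the strong convergence of the coefficients and the weak convergence of the states, yields strong pointwise-in-$t$ convergence of $(u_{m}(t),\strain_{m}(t),\p_{m}(t))$ in $H^{1}\times L^{2}\times L^{2}$. To lift this to strong convergence in $H^{1}$ in time I would combine the uniform $H^{1}(0,T;\cdots)$ bound, the convergence of the integrated dissipation $\int_{0}^{T}\!\int_{\Om} d(z_{m})|\dot\p_{m}|\,\di x\,\di t$, and a Radon--Riesz type argument adapted to the $d(z)$-weighted norm in order to promote $\dot\p_{m}\rightharpoonup\dot\p$ in $L^{2}$ to a strong convergence. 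The corresponding strong convergences of $\dot u_{m}$ and $\dot\strain_{m}$ then follow from the elasticity equilibrium, which expresses them as continuous linear functions of $\dot w$, $\dot f$, $\dot g$, and $\dot\p_{m}$ with coefficients depending continuously on $z_{m}$. This is precisely the content of the technical argument of the Appendix invoked in Proposition~\ref{p.3}, which adapts with only minor changes to the present continuous-parameter setting.
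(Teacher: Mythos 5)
Your proposal follows essentially the same route as the paper's proof: Helly plus coercivity give weak pointwise-in-$t$ limits, the mutual recovery sequence and lower semicontinuity pass stability and the energy balance to the limit (the paper packages this via the evolutive $\Gamma$-convergence theorem of \cite[Theorem~3.1]{mrs}, which is equivalent to your direct verification plus \cite[Prop.~5.7]{Mielke05}), convergence of energies then yields strong pointwise-in-$t$ convergence, and both proofs invoke the Appendix~A argument from Proposition~\ref{p.3} for the strong $H^1$-in-time upgrade. One small inaccuracy worth flagging: the Appendix argument is not a direct Radon--Riesz on the $L^1$-type dissipation $\int_0^T\int_\Om d(z_m)|\dot\p_m|$; it proceeds through the auxiliary rate-minimization problems~\eqref{e.A5}--\eqref{e.A6} and the parallelogram comparisons~\eqref{e.A9},~\eqref{e.A14}, \eqref{e.A19}--\eqref{e.A20}, which reduce the matter to an $L^2$-weighted Radon--Riesz applied to $\bxi_k - 2(\dot\strain_k - \e\dot w_k)$ and $\btheta_k - 2\dot\p_k$ rather than to $\dot\p_m$ directly.
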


\begin{proof}
The argument follows closely the general approximation tool from
\cite{mrs}. The coercivity of the energy, which is independent of~$z_m$, and an application of the Helly Selection principle entails
that, up to not relabeled subsequences
$$ (u_{m}(t), \strain_{m}(t), \p_{m}(t)) \rightharpoonup (u (t), \strain (t), \p (t)) \ \
\text{in} \  \  H^{1}(\Om; \R^{n}) \times L^{2}(\Om; \M^{n}_{S})
\times L^{2}(\Om; \M^{n}_{D})$$
for all times $t\in [0,T]$. This suffices to check that 
\begin{align}
 & \E(t,z,u(t),\strain(t),\p(t))\leq  \liminf_{m \to
  \infty } \E ( t, z_m,u_m (t),\strain_m (t),\p_m (t))\,,  \label{eq:mrs1}\\
&\VV([0,t];z,  \p (\cdot)  ) \leq \liminf_{m\to
  \infty} \VV([0,t];z, \p_m(\cdot) )\,, \label{eq:mrs2}
\end{align}
which follow by lower semicontinuity. In particular, we have used the
fact that  $\C(z_m) \to \C(z) $ and ${\mathbb H}(z_m)\to {\mathbb H} (z)$  strongly in
$L^q(\Om;\R^{n\times n \times n \times n})$ for all $q<+\infty$.

On the other hand, given any $(\hat u,\hat \strain,\hat \p)\in
\A(w(t))$, by defining the {\it mutual recovery sequence}
$$ (\hat u_m,\hat \strain_m,\hat \p_m) = (\hat u +u(t)-u_m(t),\hat
\strain +\strain(t)-\strain_m(t),\hat \p +\p(t)-\p_m(t))$$
and exploiting the quadratic character of~$\E$ one can check that 
\begin{align}
  \label{eq:mrs3}
&  \limsup_{m\to \infty}  \Big(\E (t,z_m,\hat u_m,\hat\strain_m,\hat\p_m)  -
  \E (t,z_k,u_m(t),\strain_m(t),\p_m(t)) + \D(z_m,\hat\p_m-\p_m(t))
  \Big) \\
&\quad\nonumber \leq \Big(\E(t,z,\hat u,\hat\strain,\hat\p)  -
  \E(t,z,u(t),\strain(t),\p(t)) + \D(z,\hat\p-\p(t)) \Big)\,.
\end{align}
Properties~\eqref{eq:mrs1}--\eqref{eq:mrs3} allow to apply~\cite[Theorem 3.1]{mrs} ensuring that $(u(\cdot), \strain (\cdot), \p(\cdot))$ is the quasistatic evolution
associated with~$z$, as well as   
$$ \E(t,z_m,u_m(t),\strain_m(t),\p_m(t)) \to
\E(t,z,u(t),\strain(t),\p(t))$$
for all times. The latter entails that the pointwise convergence in
$H^{1}(\Om; \R^{n}) \times L^{2}(\Om; \M^{n}_{S}) \times L^{2}(\Om;
\M^{n}_{D})$ is strong. This can be further improved to a strong
convergence in \linebreak $H^1(0,T; H^{1}(\Om; \R^{n}) \times L^{2}(\Om;
\M^{n}_{S}) \times L^{2}(\Om; \M^{n}_{D}))$ by repeating the argument
of Proposition \ref{p.3}, see Appendix A.
\end{proof}

In order to discuss the sharp-interface limit $\delta \to 0$ we start
by defining the {\it
  sharp-interface} target functional 
\begin{align}\label{e.target}
\J_0(z, u) := &\int_{\Om} \ell(z) \,  f(T) {\, \cdot\,} u(T) \, \di x + \int_{\Gamma_{N}} g(T) {\, \cdot\,} u(T) \, \di \HH^{n-1} 
\\
&\nonumber
-  \int_{0}^{T} \int_{\Om} \ell(z) \, \dot{f}( \tau) {\, \cdot\,} u (\tau) \, \di x \,\di \tau  - \int_{0}^{T} \int_{\Gamma_{N}} \dot{g}(\tau) {\, \cdot\, } u(\tau) \, \di \HH^{n-1} \di \tau 
+ \frac16 {\rm Per} (\{z=1\};\Om) \nonumber
\end{align}
where now the phase $z$ is assumed to belong to $\BV(\Om)$ and take values in
$\{0,1\}$ only. The term ${\rm Per} (\{z=1\};\Om)$ is the perimeter in
$\Om$ of
the set $\{z=1\}$ and effectively penalizes phases with large boundaries. The
constant $1/6$ has no physical relevance and is just chosen to
simplify notations. 
Indeed, setting a different constant here will be
possible. Correspondingly, the {\it sharp-interface} TO
problem reads
\begin{align}\label{min2sharp}
\min\big\{\J_0(z,u(\cdot) ) : \ & z \in BV(\Om;\{0,1\}), \\
& \text{and} \    (u(\cdot), \strain(\cdot),
\p(\cdot)) \text{ is a quasistatic evolution given $z$}\big\} \,. \nonumber
\end{align}
The main result of this section is the following
convergence.
\begin{proposition}[Sharp-interface limit of TO
  minimizers]\label{p.11} 
Let $z_\delta \in H^{1}(\Om; [0,1])$ solve the \emph{TO} problem~\eqref{min2}. Then, up to a
not relabeled subsequence, $z_\delta \to z$ strongly in $L^1(\Om)$,
where $z$ solves the sharp-interface \emph{TO} problem \eqref{min2sharp}.
\end{proposition}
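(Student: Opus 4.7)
The plan is to combine the classical Modica--Mortola $\Gamma$-convergence with the continuity of the control-to-state map proved in Proposition~\ref{p.10}, along the lines of the standard scheme for convergence of minimizers under $\Gamma$-convergence.

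First, I would derive a uniform upper bound on $\J_\delta(z_\delta, u_\delta)$. Fixing an arbitrary competitor $\hat z \in BV(\Om;\{0,1\})$ and a Modica--Mortola recovery sequence $\hat z_\delta \in H^1(\Om;[0,1])$ (obtained by the classical optimal-profile construction near $\partial\{\hat z=1\}$), one has $\hat z_\delta \to \hat z$ in $L^1(\Om)$ and $\int_\Om\bigl(\frac{\delta}{2}|\nabla \hat z_\delta|^2 + \frac{\hat z_\delta^2(1-\hat z_\delta)^2}{2\delta}\bigr)\di x \to \frac{1}{6}\mathrm{Per}(\{\hat z=1\};\Om)$. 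By Proposition~\ref{p.10}, the corresponding quasistatic evolutions $(\hat u_\delta, \hat\strain_\delta, \hat\p_\delta)$ converge to the evolution associated with $\hat z$ in $H^1(0,T;H^1(\Om;\R^n)\times L^2(\Om;\M^n_S)\times L^2(\Om;\M^n_D))$. Since $\ell\in C^1(\R)$ and the phases are bounded between $0$ and $1$, the compliance part of $\J_\delta$ passes to its sharp-interface analogue, so that $\J_\delta(\hat z_\delta,\hat u_\delta)\to \J_0(\hat z,\hat u)$. Minimality of $z_\delta$ then bounds $\J_\delta(z_\delta,u_\delta)$ uniformly in $\delta$, and in particular bounds the Modica--Mortola functional at $z_\delta$.

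Second, the Modica--Mortola compactness theorem yields a subsequence with $z_\delta \to z$ strongly in $L^1(\Om)$ and $z\in BV(\Om;\{0,1\})$. A further application of Proposition~\ref{p.10} produces the quasistatic evolution $(u,\strain,\p)$ associated with $z$, with strong convergence of $(u_\delta,\strain_\delta,\p_\delta)$. Since $z_\delta\in[0,1]$, the $L^1$-convergence upgrades to strong $L^q$-convergence for every $q<+\infty$ after composition with the $C^1$ functions $\ell,\mu,\lambda,h,d$, which is enough to pass the compliance terms to the limit. Combined with Modica's liminf inequality $\liminf_{\delta\to 0}\int_\Om\bigl(\frac{\delta}{2}|\nabla z_\delta|^2 + \frac{z_\delta^2(1-z_\delta)^2}{2\delta}\bigr)\di x \geq \frac{1}{6}\mathrm{Per}(\{z=1\};\Om)$, this gives $\liminf_{\delta\to 0}\J_\delta(z_\delta,u_\delta)\geq \J_0(z,u)$.

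Third, for any competitor $\hat z \in BV(\Om;\{0,1\})$ the same recovery-sequence argument used in the a priori bound gives $\J_\delta(\hat z_\delta,\hat u_\delta)\to \J_0(\hat z,\hat u)$. By minimality of $z_\delta$ and the liminf inequality, $\J_0(z,u)\leq \J_0(\hat z,\hat u)$; arbitrariness of $\hat z$ proves the claim. The most delicate step I anticipate is matching the $L^1$-compactness produced by Modica--Mortola with the hypothesis of Proposition~\ref{p.10} (strong $L^1$-convergence of the phase field together with a uniform $L^\infty$ bound), and ensuring that the Modica--Mortola recovery sequences can be chosen in $H^1(\Om;[0,1])$ so as to be admissible for~\eqref{min2}; both facts are standard but must be invoked explicitly.
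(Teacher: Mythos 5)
Your proposal is correct and follows essentially the same route as the paper: uniform bound on $\J_\delta(z_\delta,u_\delta)$, Modica--Mortola compactness, Proposition~\ref{p.10} for stability of the state, $\Gamma$-liminf for the lower bound, recovery sequences for the upper bound. The only small deviation is in the a priori bound step: the paper tests minimality against the constant competitor $z\equiv 0$, for which the Modica--Mortola term vanishes identically and the compliance part is $\delta$-independent, giving an immediate uniform bound without invoking any recovery sequence at that stage. You instead use a general $\hat z\in BV(\Om;\{0,1\})$ with its recovery sequence; this also works, since $\J_\delta(\hat z_\delta,\hat u_\delta)$ converges and is therefore bounded, but it is a slightly heavier way to obtain the same conclusion. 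Your remark about matching the $L^1$-compactness with the hypotheses of Proposition~\ref{p.10} is well taken and correctly resolved (strong $L^1$-convergence plus the uniform bound $z_\delta\in[0,1]$, the latter being automatic for minimizers by Proposition~\ref{p.1}).
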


\begin{proof}
  The statement follows by combining the stability of Proposition
  \ref{p.10} with the classical Modica-Mortola construction
  \cite{MR0445362}. 

Let $(u_\delta,\strain_\delta,\p_\delta)$ and $(u^0,\strain^0,\p^0)$
be the quasistatic evolutions associated to $z_\delta$ and
$z=0$, respectively. From minimality we deduce that 
\begin{align*}
&\J_\delta(z_\delta,u_\delta) \leq \J_\delta(0,u^0) = \int_{\Om} \ell(0) \,  f(T) {\, \cdot\,} u^0(T) \, \di x + \int_{\Gamma_{N}} g(T) {\, \cdot\,} u^0(T) \, \di \HH^{n-1} 
\\
&\nonumber
-  \int_{0}^{T} \int_{\Om} \ell(0) \, \dot{f}( \tau) {\, \cdot\,} u^0
  (\tau) \, \di x \,\di \tau  - \int_{0}^{T} \int_{\Gamma_{N}}
  \dot{g}(\tau) {\, \cdot\, } u^0(\tau) \, \di \HH^{n-1} \di \tau<+\infty\,. 
\end{align*}
As $\sup_\delta \J_\delta(z_\delta,u_\delta)<+\infty$ one can extract
a not relabeled subsequence such that $z_\delta \rightharpoonup z$
weakly in~$BV(\Omega)$ and strongly in~$L^1(\Om)$. Owing to
Proposition \ref{p.10} we hence have that
$(u_\delta,\strain_\delta,\p_\delta)$ converges to $(u,\strain,\p)$
strongly in $H^1(0,T; H^{1}(\Om; \R^{n}) \times L^{2}(\Om;
\M^{n}_{S}) \times L^{2}(\Om; \M^{n}_{D}))$ where $(u,\strain,\p)$ is
the quasistatic evolution given $z$. We can hence use the fact
that~\cite{MR0445362}
$$\frac16{\rm Per}(\{z=1\};\Om) \leq \liminf_{\delta \to 0}\int_{\Om}
\frac{\delta}{2} | \nabla{z_\delta}|^{2} +
\frac{z^{2}_\delta(1-z_\delta)^{2}}{2\delta} \, \di x $$
in order to check that 
\begin{equation}\label{gammainf}
\J_0(z,u) \leq \liminf_{\delta \to
  0}\J_\delta(z_\delta,u_\delta) \,.
\end{equation}
In order to prove that~$z$ actually
solves the sharp-interface TO problem~\eqref{min2sharp}, let $\hat z
\in BV(\Om;\{0,1\})$ be given and let $\hat z_\delta \in H^1(\Om)$
be the corresponding Modica-Mortola recovery sequence from~\cite{MR0445362}. This fulfills
\begin{equation}\hat  z_\delta \to \hat z \ \ \text{strongly in} \ \ L^1(\Om)  \ \ \text{and} \
\ \liminf_{\delta \to 0}\int_{\Om}
\frac{\delta}{2} | \nabla{\hat z_\delta}|^{2} +
\frac{\hat z^{2}_\delta(1-\hat z_\delta)^{2}}{2\delta} \, \di x \to \frac16{\rm
  Per}(\{\hat z=1\};\Om)\,.\label{modica}
\end{equation}
Let now
$(\hat u_\delta,\hat \strain_\delta,\hat \p_\delta)$ be the quasistatic evolution given $
\hat z_\delta$ and use again Proposition \ref{p.10} in order to check that
$\hat u_\delta \to \hat u$ in $H^1(0,T;H^1(\Om;\R^n))$ where $(\hat u,\hat
\strain, \hat \p)$ is the quasistatic evolution given~$
\hat z$. We can hence use convergence~\eqref{modica} in order to get
that 
\begin{equation}
\label{gammasup}
\J_\delta(\hat z_\delta,\hat u_\delta) \to \J_0(\hat z, \hat u)\,.
\end{equation}
By combining~\eqref{gammainf} and~\eqref{gammasup} we have that~$z$ solves the sharp-interface TO problem. 
\end{proof}

%%%%%%%%%%%%%%%%%%%%%%%%%%%%%%%%%%%%%%%%%%%%%%%%%%%%%%%%%%%%%%

\section{Differentiability of the state operator for $\gamma<+\infty$}
\label{s.diff}

In preparation for obtaining first-order optimality conditions in
Section \ref{s.optimality}, we develop here the analysis of the
control-to-state operator~$S_{k, \gamma} \colon L^{\infty}(\Om) \to
\big( H^{1}( \Om; \R^{n}) \times L^{2}(\Om; \M^{n}_{S}) \times
L^{2}(\Om, \M^{n}_{D}) \big)^{k+1}$.
For fixed $\gamma \in (0,+\infty)$ and $k \in \mathbb{N}$, the
operator $S_{k, \gamma} $ maps a control~$z \in L^{\infty}(\Om)$
in the unique corresponding approximate time-discrete quasistatic
evolution~$(u^{k}_{i}, \strain^{k}_{i}, \p^{k}_{i})_{i=0}^{k}$. The
differentiability result is stated in Theorem~\ref{t.1}. For this
statement, an auxiliary functional has to be introduced. %As a consequence, we deduce the first-order optimality conditions for the regularized optimization problem~\eqref{min4}. 
%Let us fix~$k \in \mathbb{N}$,~$\gamma \in (0,+\infty)$, and~$z \in L^{\infty}(\Om)$. Let $(u^{k}_{i}, \strain^{k}_{i}, \p^{k}_{i})_{i=0}^{k} \in \big( H^{1}(\Om; \R^{n}) \times L^{2}(\Om; \M^{n}_{S}) \times L^{2}(\Om; \M^{n}_{S}) \big)^{k+1}$ be the approximate time-discrete quasistatic evolution associated with~$z$. We recall that $(u^{k}_{0}, \strain^{k}_{0}, \p^{k}_{0}) = (0,0,0)$, as $f(0) = g(0) = w(0) =0$. 
For every $i = 1, \ldots, k$, every $(v, \eeta, \qq) \in H^{1}(\Om; \R^{n}) \times L^{2}(\Om; \M^{n}_{S}) \times L^{2}(\Om; \M^{n}_{D})$, every $ \p  \in L^{2}(\Om; \M^{n}_{D})$, and every $\varphi \in L^{\infty}(\Om)$, we define the functional
\begin{align}\label{e.7}
\F^{\p} _{\gamma} (t_{i}^{k}, z, \varphi, v, \eeta, \qq) :=&\ \frac12 \int_{\Om} \C(z) \eeta{\, \cdot\,} \eeta\, \di x + \frac12 \int_{\Om} \ha(z) \qq {\, \cdot\,} \qq \, \di x 
\\
&
+\int_{\Om} (\C'(z) \varphi) \strain^{k}_{i} {\, \cdot \,} \eeta\, \di x + \int_{\Om} (\ha'(z) \varphi) \p^{k}_{i} {\, \cdot\,} \qq \, \di x \nonumber
\\
&
 + \int_{\Om} \varphi \, d'(z)  \nabla_{\q} h_{\gamma} ( \p^{k}_{i} - \p^{k}_{i-1}) {\, \cdot\,} \qq \, \di x \nonumber
\\
&
+ \frac12 \int_{\Om} d(z) \nabla^{2}_{\q} h_{\gamma} (\p^{k}_{i} - \p^{k}_{i-1})(\qq - \p){\, \cdot\,} (\qq - \p) \, \di x \nonumber
\\
&
- \int_{\Om} \varphi \, \ell'(z) f^{k}_{i} {\, \cdot\,} v \, \di x \,, \nonumber
\end{align} 
where we recall that $(u_{0}, \strain_{0}, \p_{0}) = ( 0,0,0)$.

\begin{theorem}[Differentiability of the control-to-state
  operator $S_{k, \gamma}$]\label{t.1}
Let~$\gamma \in (0,+\infty)$. Then, the control-to-state operator~$S_{k, \gamma}\colon L^{\infty}(\Om) \to
\big( H^{1}( \Om; \R^{n}) \times L^{2}(\Om; \M^{n}_{S}) \times
L^{2}(\Om, \M^{n}_{D}) \big)^{k+1}$ is Frech\'et differentiable. Denoting by $(u^{k}_{i}, \strain^{k}_{i}, \p^{k}_{i})_{i=1}^{k}$ the approximate time-discrete quasistatic evolution associated with~$z \in L^{\infty}(\Om)$, for every~$\varphi \in L^{\infty}(\Om)$ the derivative of~$S_{k, \gamma}$ in~$z$ in the direction~$\varphi$ is given by the vector~$(v^{k, \varphi}_{i}, \eeta^{k, \varphi}_{i}, \qq^{k, \varphi}_{i})_{i=0}^{k} \in \A(0)^{k+1}$ defined recursively as the unique solution of
\begin{equation}\label{e.der}
\min\,\big\{\F^{\qq^{k, \varphi}_{i-1}} _{\gamma} (t_{i}^{k}, z, \varphi, v, \eeta, \qq) : \, (v, \eeta, \qq) \in \A(0) \big\}\,,
\end{equation}
where we have set $\qq^{k, \varphi}_{-1}= 0$.
\end{theorem}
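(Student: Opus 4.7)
The plan is to argue by induction on $i = 0, 1, \ldots, k$, showing that the partial map $z \mapsto (u^k_i, \strain^k_i, \p^k_i) \eqcolon S^i_{k,\gamma}(z)$ is Fréchet differentiable with derivative given by~\eqref{e.der}. The base case $i=0$ is trivial since the initial condition is independent of $z$. For the inductive step, one uses that, thanks to $\gamma<+\infty$ and $h_\gamma \in C^\infty$, the problem defining $(u^k_i, \strain^k_i, \p^k_i)$ is the minimization of a uniformly convex $C^1$ functional over $\A(w^k_i)$, and is thus characterized by the Euler--Lagrange equation: for all $(v,\eeta,\qq)\in\A(0)$,
\begin{align*}
&\int_\Om \bigl[\C(z) \strain^k_i {\,\cdot\,} \eeta + \ha(z) \p^k_i {\,\cdot\,} \qq + d(z)\, \nabla_\q h_\gamma(\p^k_i - \p^k_{i-1}) {\,\cdot\,} \qq\bigr]\, \di x \\
&\qquad\qquad = \int_\Om \ell(z) f^k_i {\,\cdot\,} v \, \di x + \int_{\Gamma_N} g^k_i {\,\cdot\,} v \, \di \HH^{n-1}.
\end{align*}

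Before differentiating, a preliminary Lipschitz estimate for $S_{k,\gamma}$ from $L^\infty(\Om)$ into $\bigl(H^1 \times L^2 \times L^2\bigr)^{k+1}$ would be obtained by subtracting these equations at two phases $z_1,z_2$, testing with the difference of the states, and exploiting coercivity~\eqref{e.C}--\eqref{e.H}, monotonicity of $\nabla_\q h_\gamma$, and~\eqref{e.h2}. Subsequently, I would invoke the Gr\"oger regularity hypothesis on $\Om \cup \Gamma_N$ to promote each $S^i_{k,\gamma}(z)$ to $W^{1,p}(\Om;\R^n) \times L^p(\Om;\M^n_S) \times L^p(\Om;\M^n_D)$ for some $p>2$, with Lipschitz dependence on $z$ in these stronger norms. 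This higher integrability is indispensable in order to give meaning to products such as $\C'(z)\varphi\,\strain^k_i$ in $L^2$ for $\varphi\in L^\infty$, which appear in the linearized problem~\eqref{e.7}.

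The natural candidate for the derivative at step $i$ is then the unique minimizer $(v^{k,\varphi}_i, \eeta^{k,\varphi}_i, \qq^{k,\varphi}_i)$ of the uniformly convex functional $\F^{\qq^{k,\varphi}_{i-1}}_\gamma(t^k_i, z, \varphi, \cdot)$ over $\A(0)$; its Euler--Lagrange equations are precisely the formal linearization of the ones above. To upgrade this from a formal to a genuine Fréchet derivative, one estimates the remainder
\begin{equation*}
(r^t_v, r^t_\eeta, r^t_\qq) \coloneq \tfrac{1}{t}\bigl(S^i_{k,\gamma}(z+t\varphi) - S^i_{k,\gamma}(z)\bigr) - (v^{k,\varphi}_i, \eeta^{k,\varphi}_i, \qq^{k,\varphi}_i)
\end{equation*}
by taking the difference of the Euler--Lagrange equations at $z+t\varphi$, at $z$, and of the linearized one, testing with $(r^t_v, r^t_\eeta, r^t_\qq) \in \A(0)$, and Taylor-expanding $\C$, $\ha$, $d$, $\ell$, and $\nabla_\q h_\gamma$ to second order around $z$. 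Uniform coercivity, the bounds~\eqref{e.h1}--\eqref{e.h2}, the $L^p$-estimates from the previous paragraph, and the inductive hypothesis at index $i-1$ combine to give $\|(r^t_v, r^t_\eeta, r^t_\qq)\| = o(1)$ as $t\to 0$; linearity in $\varphi$ of the candidate derivative then promotes Gâteaux to Fréchet differentiability.

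The main obstacle I anticipate is the treatment of the nonsmooth, history-dependent plastic term $d(z)\nabla_\q h_\gamma(\p^k_i - \p^k_{i-1})$: its linearization produces the second-derivative contribution $\nabla^2_\q h_\gamma(\p^k_i - \p^k_{i-1})(\qq - \p)$ in~\eqref{e.7}, which pairs with $\qq - \qq^{k,\varphi}_{i-1}$ and thereby couples the derivative at step $i$ to the derivative at step $i-1$ through the recursion. Propagating the $L^p$-regularity and Lipschitz estimates along this coupling, while keeping constants uniform in the recursion, is the delicate bookkeeping at the heart of the proof, and it is precisely here that the regularization $\gamma<+\infty$ enters crucially via the $2\gamma$-Lipschitz bound~\eqref{e.h2} on $\nabla_\q h_\gamma$.
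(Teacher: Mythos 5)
Your proposal follows essentially the same path as the paper's proof: induction on the time index, Euler--Lagrange characterization of the state at each step, preliminary $W^{1,\tilde p}\times L^{\tilde p}\times L^{\tilde p}$ regularity and Lipschitz dependence of $S_{k,\gamma}$ on $z$ via Gr\"oger regularity and the Herzog--Meyer--Wachsmuth theorem, a remainder estimate obtained by subtracting the Euler--Lagrange equations at $z$, at $z+t\varphi$, and for the linearized problem, testing with the remainder, and propagating the bound through the recursion driven by the coupling term $\nabla^2_\q h_\gamma(\p^k_i-\p^k_{i-1})(\qq^{k,\varphi}_i-\qq^{k,\varphi}_{i-1})$. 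One imprecision worth flagging: the higher integrability is not needed to make sense of $(\C'(z)\varphi)\strain^k_i\cdot\eeta$ (already in $L^1$, since $\strain^k_i,\eeta\in L^2$ and $\C'(z)\varphi\in L^\infty$), but rather in the remainder bound, where the term $\int_\Om d(z)\bigl(\nabla^2_\q h_\gamma(\boldsymbol{\xi}_t)-\nabla^2_\q h_\gamma(\p^k_i-\p^k_{i-1})\bigr)(\p^k_{i,t}-\p^k_i)\cdot\overline{\qq}^k_{i,t}\,\di x$ must be split by H\"older with $\p^k_{i,t}-\p^k_i$ controlled in $L^{\tilde p}$, $\tilde p>2$, so that $\nabla^2_\q h_\gamma(\boldsymbol{\xi}_t)-\nabla^2_\q h_\gamma(\p^k_i-\p^k_{i-1})$ only needs to vanish in some $L^\nu$ with $\nu<\infty$, as supplied by dominated convergence.
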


\begin{remark}
Since $f^{k}_{0} = g^{k}_{0} = w^{k}_{0} = 0$ and~$\qq^{k, \varphi}_{-1} = 0$, it is easy to see that $( v^{k, \varphi}_{0}, \eeta^{k, \varphi}_{0}, \qq^{k, \varphi}_{0}  ) = (0,0,0)$ for every $\varphi \in L^{\infty}(\Om)$.
\end{remark}

\begin{remark}
Notice that the incremental minimum problems~\eqref{e.der} define a linear operator from~$L^{\infty}(\Om)$ to~$\big( H^{1}(\Om) \times L^{2}(\Om) \times L^{2}(\Om) \big)^{k+1}$.
\end{remark}

As a corollary of Theorem~\ref{t.1} we get the first-order optimality
conditions for the regularized optimization problem~\eqref{min4}. This
will be the starting point of the analysis of Section~\ref{s.optimality}.

\begin{corollary}[Optimality conditions for the approximate
  time-discrete TO problem]\label{c.approx-optimality}
Under \linebreak the assumptions of Theorem~\ref{t.1}, if~$z \in H^{1}(\Om; [0,1])$ is a solution of~\eqref{min4} with associated approximate time-discrete quasistatic evolution $(u^{k}_{i}, \strain^{k}_{i}, \p^{k}_{i})_{i=0}^{k}$, then there exists $(\overline{u}^{k}_{i}, \overline{\strain}^{k}_{i}, \overline{\p}^{k}_{i})_{i=1}^{k+1} \in \A(0)^{k+1}$ such that $(\overline{u}^{k}_{k+1}, \overline{\strain}^{k}_{k+1}, \overline{\p}^{k}_{k+1}) = (0,0,0)$ and for every $i=k, \ldots, 1$, every $\varphi \in L^{\infty}(\Om) \cap H^{1}(\Om)$, and every $(v, \eeta, \qq) \in \A(0)$
\begin{align} \label{e.8}
&  \int_{\Om} \C(z)  \overline{\strain}^{k}_{i} {\, \cdot\,} \eeta \, \di x  + \int_{\Om} \ha(z) \overline{\p}^{k}_{i} {\, \cdot\,} \qq \, \di x + \int_{\Om} d(z) \nabla^{2}_{\q} h_{\gamma} ( \p^{k}_{i} - \p^{k}_{i-1}) (\overline{\p}^{k}_{i} - \overline{\p}^{k}_{i+1} ) {\, \cdot\,} \qq \, \di x
\\
&
\qquad \qquad  \qquad \quad   - \int_{\Om} \ell(z) f^{k}_{i} {\, \cdot\,} v \, \di x - \int_{\Gamma_{N}} g^{k}_{i} {\, \cdot\,} v \, \di \HH^{n-1} = 0 \,, \nonumber
\\[2mm]
& \sum_{j=1}^{k} \bigg( \int_{\Om} \varphi \, \ell'(z) ( f^{k}_{j} - f^{k}_{j-1}) {\, \cdot\,} (u^{k}_{j} + \overline{u}^{k}_{j} )\, \di x  \label{e.9}
\\
&
\qquad \qquad - \int_{\Om} \big( \C'(z) \varphi \big) ( \strain^{k}_{j} - \strain^{k}_{j-1}) {\, \cdot\,} \overline{\strain}^{k}_{j} \, \di x - \int_{\Om} \big( \ha'(z) \varphi \big) ( \p^{k}_{j} - \p^{k}_{j-1}) {\, \cdot\,} \overline{\p}^{k}_{j} \, \di x \bigg) \nonumber
\\
&
\qquad \qquad - \int_{\Om} \varphi \, d'(z) \nabla_{\q} \big( h_{\gamma} ( \p^{k}_{j} - \p^{k}_{j-1}) - \nabla_{\q} h_{\gamma} ( \p^{k}_{j-1} - \p^{k}_{j-2}) \big) {\, \cdot\,} \overline{\p}^{k}_{j} \, \di x \bigg)  \nonumber
\\
&
\qquad \qquad + \int_{\Om} \delta \nabla{z}{\, \cdot\,} \nabla{\varphi} + \frac{\varphi}{\delta} ( z (1 - z)^{2} - z^{2}(1-z)) \, \di x = 0 \,, \nonumber
\end{align} 
where $\p^{k}_{-1} := 0$.
\end{corollary}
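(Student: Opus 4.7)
The plan is to follow the classical Lagrangian scheme: introduce an adjoint state that eliminates the linearized state from the Fr\'echet derivative of the reduced cost, use unconstrained optimality of $z$, and reorganize by discrete Abel summation in time to match the outcome with~\eqref{e.9}. Starting from the terminal value $(\overline u^k_{k+1},\overline\strain^k_{k+1},\overline\p^k_{k+1}) := (0,0,0)$, I would define, for $i = k,k-1,\ldots,1$, the triple $(\overline u^k_i,\overline\strain^k_i,\overline\p^k_i) \in \A(0)$ as the unique minimizer of the strictly convex quadratic functional on $\A(0)$ whose first variation coincides with~\eqref{e.8}. Coercivity follows from~\eqref{e.C}--\eqref{e.H}, from Korn's inequality (recall $\mathcal H^{n-1}(\Gamma_D)>0$), and from the positive semidefiniteness of the Hessian $\nabla^{2}_{\q}h_{\gamma}(\p^k_i-\p^k_{i-1})$ as the Hessian of a convex function; existence and uniqueness then follow by the direct method.

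Since the material coefficients are extended constantly outside $[0,1]$ and the Modica--Mortola integrand is $C^{1}$ on $\mathbb R$, the problem~\eqref{min4} is \emph{unconstrained} in $z \in H^1(\Om)$. By Theorem~\ref{t.1}, the composition $\J_{k,\delta}\circ S_{k,\gamma}$ is Fr\'echet differentiable at $z$, so for every $\varphi \in L^\infty(\Om)\cap H^1(\Om)$ the chain rule yields
\[
0 \,=\, D_z \J_{k,\delta}(z,(u^k_i)_{i=0}^k)[\varphi] \,+\, \sum_{i=0}^k \partial_{u_i}\J_{k,\delta}(z,(u^k_i)_{i=0}^k)\big[v^{k,\varphi}_i\big].
\]
The first summand directly produces the Modica--Mortola contribution together with the $\varphi\,\ell'(z)$ compliance integrals in~\eqref{e.9}, while the second summand depends linearly on the linearized state through the loading factors $f^k_i$ and $g^k_i$ in $\J_{k,\delta}$.

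To eliminate the linearized state I would test the adjoint equation~\eqref{e.8} with $(v^{k,\varphi}_i,\eeta^{k,\varphi}_i,\qq^{k,\varphi}_i)\in\A(0)$, and test the Euler--Lagrange equation of the linearized incremental problem~\eqref{e.der} with $(\overline u^k_i,\overline\strain^k_i,\overline\p^k_i)\in\A(0)$. Subtracting, the symmetric forms $\int_\Om \C(z)\eeta^{k,\varphi}_i\cdot\overline\strain^k_i$ and $\int_\Om \ha(z)\qq^{k,\varphi}_i\cdot\overline\p^k_i$ cancel, and one obtains an identity that expresses $\int_\Om \ell(z)f^k_i\cdot v^{k,\varphi}_i + \int_{\Gamma_N} g^k_i\cdot v^{k,\varphi}_i$ as a sum of $\varphi$-weighted primal--adjoint products plus the forward--backward plastic coupling
\[
\int_\Om d(z)\,\nabla^{2}_{\q}h_{\gamma}(\p^k_i-\p^k_{i-1})\big[(\overline\p^k_i-\overline\p^k_{i+1})\cdot\qq^{k,\varphi}_i - (\qq^{k,\varphi}_i-\qq^{k,\varphi}_{i-1})\cdot\overline\p^k_i\big]\,\di x.
\]
Summing over $i = 1,\ldots,k$ and performing Abel summation by parts, using $v^{k,\varphi}_0 = \qq^{k,\varphi}_{-1} = \p^k_{-1} = 0$ and the terminal condition $\overline\p^k_{k+1}=0$, telescopes the loading and dissipative sums into the increments $f^k_j-f^k_{j-1}$, $\strain^k_j-\strain^k_{j-1}$, $\p^k_j-\p^k_{j-1}$, and $\nabla_{\q}h_\gamma(\p^k_j-\p^k_{j-1})-\nabla_{\q}h_\gamma(\p^k_{j-1}-\p^k_{j-2})$ appearing in~\eqref{e.9}, and it also produces the symmetric combination $u^k_j+\overline u^k_j$ in the first line.

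The main obstacle is precisely this Abel summation: it must align the forward-in-time increment $\qq^{k,\varphi}_i-\qq^{k,\varphi}_{i-1}$ of the linearized plastic flow rule with the backward-in-time increment $\overline\p^k_i-\overline\p^k_{i+1}$ of the adjoint, and must correctly distribute the $\varphi\,\ell'(z)$ contributions between the direct derivative of the compliance and the $\overline u^k_j$-terms produced by the duality pairing. The boundary conventions stated in the corollary are exactly what is needed to make all unmatched summation-by-parts boundary contributions vanish, so that only the expression~\eqref{e.9} survives.
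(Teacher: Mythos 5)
Your proposal is correct and follows essentially the same route as the paper: define the adjoint state recursively (backwards in $i$) via a strictly convex quadratic variational problem whose Euler--Lagrange equation is~\eqref{e.8}, obtain the stationarity condition for the reduced cost by the chain rule through Theorem~\ref{t.1}, and then eliminate the linearized state by pairing the adjoint equation with the linearized state and performing discrete summation by parts (with the terminal condition $\overline{\p}^k_{k+1}=0$ and the initial conditions $v^{k,\varphi}_0=\qq^{k,\varphi}_{-1}=\p^k_{-1}=0$ precisely absorbing the boundary terms). The paper carries out the Abel summation explicitly as a long chain of equalities, whereas you describe it structurally, but the argument is the same.
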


\begin{proof}
Let $z \in H^{1}(\Om; [0,1])$ be a minimizer of~$\J_{k, \delta}$ with corresponding approximate time-discrete quasistatic evolution $(u^{k}_{i}, \strain^{k}_{i}, \p^{k}_{i})_{i=0}^{k}$. Let $\varphi \in L^{\infty}( \Om) \cap H^{1}(\Om)$ and $t \in \R \setminus \{0\}$. Setting $z_{t}:= z + t\varphi$ and denoting by $(u^{k}_{i, t}, \strain^{k}_{i, t}, \p^{k}_{i, t})_{i=0}^{k}$ the approximate time-discrete quasistatic evolution corresponding to~$z_{t}$, we have that $\J_{k, \delta}(z, (u^{k}_{i})_{i=0}^{k}) \leq \J_{k, \delta}(z_{t}, (u^{k}_{i, t}))$. Differentiating~$\J_{k, \delta}(z_{t}, (u^{k}_{i, t}))$ w.r.t.~$t$, we deduce from the minimality of~$z$ and from Theorem~\ref{t.1} that
\begin{align}\label{e.derivata-costo}
\int_{\Om} & \varphi \, \ell'(z) \, f^{k}_{k} {\, \cdot\,} u_{k}\, \di x + \int_{\Om} \ell(z) \, f^{k}_{k} {\, \cdot\,} v^{k, \varphi}_{k} \, \di x + \int_{\Gamma_{N}} g^{k}_{k} {\, \cdot\,} v^{k, \varphi}_{k} \, \di \HH^{n-1}   
\\
&
- \sum_{j=1}^{k} \Bigg (\int_{\Om} \varphi \, \ell'(z) ( f^{k}_{j} - f^{k}_{j-1} ) {\, \cdot\,} u^{k}_{j-1} \, \di x + \int_{\Om} \ell(z) (f^{k}_{j} - f^{k}_{j-1}) {\, \cdot\,} v^{k, \varphi}_{j-1} \, \di x \Bigg) \nonumber
\\
&
- \sum_{j=1}^{k} \int_{\Om} ( g^{k}_{j} - g^{k}_{j-1})  {\, \cdot\,} v^{k, \varphi}_{j-1} \, \di \HH^{n-1} + \int_{\Om} \delta \nabla z{\, \cdot\,} \nabla \varphi + \frac{\varphi}{\delta} ( z (1-z)^{2} - z^{2}(1-z)) \, \di x = 0\nonumber
\end{align}
for every $\varphi \in H^{1}(\Om) \cap L^{\infty}(\Om)$, where $(v^{k, \varphi}_{i}, \eeta^{k, \varphi}_{i}, \qq^{k, \varphi}_{i})_{i=0}^{k} \in \A(0)^{k}$ has been defined in Theorem~\ref{t.1}.  We further set $(v^{k, \varphi}_{-1}, \eeta^{k, \varphi}_{-1}, \qq^{k, \varphi}_{-1}) := (0,0,0)$.

We now define $(\overline{u}^{k}_{i}, \overline{\strain}^{k}_{i}, \overline{\p}^{k}_{i}) \in \A(0)$ as the unique solution of the minimum problem
\begin{align*}
\min\, \bigg \{ \frac12 \int_{\Om}  \C(z) \eeta {\, \cdot\,} \eeta \, \di x &+ \frac12 \int_{\Om} \ha(z) \qq {\,\cdot\,} \qq \, \di x 
\\
&
+ \frac12 \int_{\Om} d(z) \nabla^{2}_{\q} h_{\gamma} (\p^{k}_{i} - \p^{k}_{i-1} )( \qq - \overline{\p}^{k}_{i+1} ) {\, \cdot\,} (\qq - \overline{\p}^{k}_{i+1}) \, \di x 
\\
&
- \int_{\Om} \ell(z) f^{k}_{i}{\, \cdot\,} v \, \di x - \int_{\Gamma_{N}} g^{k}_{i}{\, \cdot\,} v\, \di \HH^{n-1}: \, (v, \eeta, \qq) \in \A(0) \bigg\}
\end{align*}
for $i=k, \ldots, 1$, where we have set $(\overline{u}^{k}_{k+1}, \overline{\strain}^{k}_{k+1}, \overline{\p}^{k}_{k+1})  := (0, 0, 0)$. In particular, $(\overline{u}^{k}_{i}, \overline{\strain}^{k}_{i}, \overline{\p}^{k}_{i})$ satisfies~\eqref{e.8}.

In order to deduce~\eqref{e.9}, we notice that by Theorem~\ref{t.1} and by~\eqref{e.8} and using that  $f^{k}_{0} = g^{k}_{0} = \overline{u}^{k}_{k+1} = 0$ and $\overline \strain^{k}_{k+1} = \overline{\p}^{k}_{k+1} = 0$ we have that
\begin{align*}
& \int_{\Om} \ell(z) \, f^{k}_{k} {\, \cdot\,} v^{k, \varphi}_{k} \, \di x + \int_{\Gamma_{N}} g^{k}_{k} {\, \cdot\,} v^{k, \varphi}_{k} \, \di \HH^{n-1}
\\
&
\qquad  - \sum_{j=1}^{k}  \bigg(  \int_{\Om}  \ell(z) ( f^{k}_{j} - f^{k}_{j-1}) {\, \cdot\,} v^{k, \varphi}_{j-1} \, \di x + \int_{\Om} (g^{k}_{j} - g^{k}_{j-1}) {\, \cdot\,} v^{k, \varphi}_{j-1} \, \di \HH^{n-1}  \bigg)
\\
&
=  \int_{\Om} \C(z) \overline{\strain}^{k}_{k} {\, \cdot\,} \e v^{k, \varphi}_{k} \, \di x - \sum_{j=1}^{k} \int_{\Om} \C(z) ( \overline{\strain}^{k}_{j} - \overline{\strain}^{k}_{j-1}) {\, \cdot\,} \e v^{k, \varphi}_{j-1}\, \di x 
\\
&
= \int_{\Om} \C(z) \e \overline{u}^{k}_{k} {\, \cdot\,} \eeta^{k, \varphi}_{k} \, \di x - \int_{\Om} \C(z) \overline{\p}^{k}_{k}{\, \cdot\,} \eeta^{k, \varphi}_{k} \, \di x + \int_{\Om} \C(z) \overline{\strain}^{k}_{k} {\, \cdot\,} \qq^{k, \varphi}_{k} \, \di x 
\\
&
\qquad\qquad  -  \sum_{j=1}^{k} \bigg( \int_{\Om} \C(z) (\e \overline{u}^{k}_{j} - \e \overline{u}^{k}_{j-1}) { \, \cdot\, } \eeta^{k, \varphi}_{j-1} \, \di x - \int_{\Om} \C(z) ( \overline{\p}^{k}_{j} -  \overline{\p}^{k}_{j-1}) { \, \cdot\, } \eeta^{k, \varphi}_{j-1} \, \di x
\\
&
\qquad\qquad + \int_{\Om} \C(z) ( \overline{\strain}^{k}_{j} -  \overline{\strain}^{k}_{j-1}) { \, \cdot\, } \qq^{k, \varphi}_{j-1} \, \di x \bigg)
\\
&
= \sum_{j=1}^{k} \bigg( \int_{\Om} \C(z) \e \overline{u}^{k}_{j} {\, \cdot\,} ( \eeta^{k, \varphi}_{j} - \eeta^{k, \varphi}_{j-1}) \, \di x - \int_{\Om} \C(z) \overline{\p}^{k}_{j} {\, \cdot\,} ( \eeta^{k, \varphi}_{j} - \eeta^{k, \varphi}_{j-1}) \, \di x 
\\
&
\qquad \qquad + \int_{\Om} \C(z) \overline{\strain}^{k}_{j} {\, \cdot\,} (\qq^{k, \varphi}_{j} - \qq^{k, \varphi}_{j-1}) \, \di x  \bigg)
\\
&
= \sum_{j=1}^{k} \bigg( \int_{\Om} \varphi \, \ell'(z) ( f^{k}_{j} - f^{k}_{j-1}) {\, \cdot\,} \overline{u}^{k}_{j} \, \di x - \int_{\Om} \big( \C'(z) \varphi \big) ( \strain^{k}_{j} - \strain^{k}_{j-1}) {\, \cdot\,} \e \overline{u}^{k}_{j} \, \di x \bigg)
\\
&
\qquad \qquad - \sum_{j=1}^{k} \bigg( \int_{\Om} \ha(z) ( \qq^{k, \varphi}_{j} - \qq^{k, \varphi}_{j-1}) {\, \cdot\,} \overline{\p}^{k}_{j} \,\di x - \int_{\Om} \big( \C'(z) \varphi \big) ( \strain^{k}_{j} - \strain^{k}_{j-1}) {\, \cdot\,} \overline{\p}^{k}_{j} \,\di x 
\\
&
\qquad \qquad + \int_{\Om} \big( \ha'(z) \varphi \big) ( \p^{k}_{j} - \p^{k}_{j-1}) {\, \cdot\,} \overline{\p}^{k}_{j} \, \di x 
\\
&
\qquad \qquad + \int_{\Om} \varphi \, d'(z) \big( \nabla_{\q} h_{\gamma} ( \p^{k}_{j} - \p^{k}_{j-1}) - \nabla_{\q} h_{\gamma} ( \p^{k}_{j-1} - \p^{k}_{j-2}) \big) {\, \cdot\,} \overline{\p}^{k}_{j} \, \di x 
\\
&
\qquad \qquad + \int_{\Om} d(z) \nabla^{2}_{\q} h_{\gamma} ( \p^{k}_{j} - \p^{k}_{j-1}) ( \qq^{k, \varphi}_{j} - \qq^{k, \varphi}_{j-1}) {\, \cdot\,} \overline{\p}^{k}_{j} \, \di x   
\\
&
\qquad \qquad -  \int_{\Om} d(z) \nabla^{2}_{\q} h_{\gamma} ( \p^{k}_{j-1} - \p^{k}_{j-2}) ( \qq^{k, \varphi}_{j-1} - \qq^{k, \varphi}_{j-2}) {\, \cdot\,} \overline{\p}^{k}_{j} \, \di x \bigg)  
\\
&
\qquad \qquad + \sum_{j=1}^{k} \bigg( \int_{\Om} \ha(z) \overline{\p}^{k}_{j} {\, \cdot\,} ( \qq^{k, \varphi}_{j} - \qq^{k, \varphi}_{j-1})  \,\di x
\\
&
\qquad \qquad + \int_{\Om} d(z) \nabla^{2}_{\q} h_{\gamma} ( \p^{k}_{j} - \p^{k}_{j-1}) ( \overline{\p}^{k}_{j} - \overline{\p}^{k}_{j+1}) { \, \cdot\,} ( \qq^{k, \varphi}_{j} - \qq^{k, \varphi}_{j-1}) \, \di x \bigg) 
\\
&
=   \sum_{j=1}^{k} \bigg( \int_{\Om} \varphi \, \ell'(z) ( f^{k}_{j} - f^{k}_{j-1}) {\, \cdot\,} \overline{u}^{k}_{j} \, \di x  - \int_{\Om} \big( \C'(z) \varphi \big) ( \strain^{k}_{j} - \strain^{k}_{j-1}) {\, \cdot\,} \overline{\strain}^{k}_{j} \, \di x
\\
&
\qquad \qquad - \int_{\Om} \big( \ha'(z) \varphi \big) ( \p^{k}_{j} - \p^{k}_{j-1}) {\, \cdot\,} \overline{\p}^{k}_{j} \, \di x 
\\
&
\qquad \qquad - \int_{\Om} \varphi \, d'(z) \big(  \nabla_{\q} h_{\gamma} ( \p^{k}_{j} - \p^{k}_{j-1}) - \nabla_{\q} h_{\gamma} ( \p^{k}_{j-1} - \p^{k}_{j-2}) \big) {\, \cdot\,} \overline{\p}^{k}_{j} \, \di x \bigg) \,,
\end{align*}
where, in the last equality, we have used the following
\begin{align*}
& \sum_{j=1}^{k} \int_{\Om} d(z) \nabla^{2}_{\q} h_{\gamma} ( \p^{k}_{j} - \p^{k}_{j-1} ) ( \overline{\p}^{k}_{j} - \overline{\p}^{k}_{j+1}) {\, \cdot\,} ( \qq^{k, \varphi}_{j} - \qq^{k, \varphi}_{j-1}) \, \di x 
\\
&
\qquad = \sum_{j=1}^{k}   \int_{\Om} d(z) \nabla^{2}_{\q} h_{\gamma} ( \p^{k}_{j} - \p^{k}_{j-1} ) ( \qq^{k, \varphi}_{j} - \qq^{k, \varphi}_{j-1}) {\, \cdot\,} \overline{\p}^{k}_{j}   \, \di x 
\\
&
\qquad \qquad - \sum_{j=1}^{k} \int_{\Om} d(z) \nabla^{2}_{\q} h_{\gamma} ( \p^{k}_{j-1} - \p^{k}_{j-2} ) ( \qq^{k, \varphi}_{j-1} - \qq^{k, \varphi}_{j-2}) {\, \cdot\,} \overline{\p}^{k}_{j}   \, \di x \,.
\end{align*}

%We now show that the last two sums in the right-hand side of the previous equality coincide. Indeed, using that $\qq^{k,\varphi}_{0} = \overline{\p}^{k}_{k+1} = 0$ we have that
%\begin{align*}
%\sum_{j=1}^{k}&  \bigg( \int_{\Om}d(z) \nabla^{2}_{\q} h_{\gamma} (\p^{k}_{j+1} - \p^{k}_{j})( \overline{\p}^{k}_{j+1} - \overline{\p}^{k}_{j+2}) {\, \cdot\,} \qq^{k, \varphi}_{j}\, \di x  
%\\
%&
%\qquad  - \int_{\Om} d(z)\nabla^{2}_{\q} h_{\gamma} (\p^{k}_{j} - \p^{k}_{j-1}) ( \overline{\p}^{k}_{j} - \overline{\p}^{k}_{j+1}) {\, \cdot\,} \qq^{k, \varphi}_{j}\, \di x \bigg) 
%\\
%&
%= \sum_{j=1}^{k}  \bigg( \int_{\Om}d(z) \nabla^{2}_{\q} h_{\gamma} (\p^{k}_{j} - \p^{k}_{j-1})( \overline{\p}^{k}_{j} - \overline{\p}^{k}_{j+1}) {\, \cdot\,} \qq^{k, \varphi}_{j-1}\, \di x  
%\\
%&
%\qquad - \int_{\Om} d(z)\nabla^{2}_{\q} h_{\gamma} (\p^{k}_{j} - \p^{k}_{j-1}) ( \overline{\p}^{k}_{j} - \overline{\p}^{k}_{j+1}) {\, \cdot\,} \qq^{k, \varphi}_{j}\, \di x \bigg) 
%\\
%&
%=  \sum_{j=1}^{k} \int_{\Om} d(z) \nabla^{2}_{\q} h_{\gamma} (\p^{k}_{j} - \p^{k}_{j-1}) (\qq^{k}_{j} - \qq^{k}_{j-1}){\, \cdot\,} (\overline{\p}^{k}_{j+1} - \overline{\p}^{k}_{j})  \, \di x\,.
%\end{align*}
All in all, we have proved that
\begin{align*}
& \int_{\Om} \ell(z) \, f^{k}_{k} {\, \cdot\,} v^{k, \varphi}_{k} \, \di x + \int_{\Gamma_{N}} g^{k}_{k} {\, \cdot\,} v^{k, \varphi}_{k} \, \di \HH^{n-1}
\\
&
\qquad  - \sum_{j=1}^{k}  \bigg(  \int_{\Om}  \ell(z) ( f^{k}_{j} - f^{k}_{j-1}) {\, \cdot\,} v^{k, \varphi}_{j-1} \, \di x - \int_{\Om} (g^{k}_{j} - g^{k}_{j-1}) {\, \cdot\,} v^{k, \varphi}_{j-1} \, \di \HH^{n-1}  \bigg)
\\
&
=   \sum_{j=1}^{k} \bigg( \int_{\Om} \varphi \, \ell'(z) ( f^{k}_{j} - f^{k}_{j-1}) {\, \cdot\,} \overline{u}^{k}_{j} \, \di x  - \int_{\Om} \big( \C'(z) \varphi \big) ( \strain^{k}_{j} - \strain^{k}_{j-1}) {\, \cdot\,} \overline{\strain}^{k}_{j} \, \di x
\\
&
\qquad \qquad - \int_{\Om} \big( \ha'(z) \varphi \big) ( \p^{k}_{j} - \p^{k}_{j-1}) {\, \cdot\,} \overline{\p}^{k}_{j} \, \di x \bigg) 
\\
&
\qquad \qquad - \int_{\Om} \varphi \, d'(z) \nabla_{\q} \big( h_{\gamma} ( \p^{k}_{j} - \p^{k}_{j-1}) - \nabla_{\q} h_{\gamma} ( \p^{k}_{j-1} - \p^{k}_{j-2}) \big) {\, \cdot\,} \overline{\p}^{k}_{j} \, \di x \bigg) \,,
\end{align*}
which, together with~\eqref{e.derivata-costo}, implies~\eqref{e.9}.
\end{proof}

The rest of the section is devoted to the proof of Theorem~\ref{t.1}. The next two lemmas are a reformulation of~\cite[Lemmas~3.5 and~3.6]{Alm-Ste_20}, which is needed in order to take care of the term~$\p^{k}_{i-1}$ appearing in the minimization problem~\eqref{e.5} at time~$t^{k}_{i}$ and which is also varying with the phase field~$z$. We recall that this was not the case in~\cite{Alm-Ste_20}, as the problem considered there is static.

\begin{lemma}\label{l.1}
For $z \in [0,1]$, $\gamma \in (0, +\infty)$, and $\pp \in \M^{n}_{D}$, let $F_{z, \gamma, \pp}\colon \M^{n}_{D} \to \M^{n}_{D}$ be the map defined by
\begin{equation}\label{e.Fgamma}
F_{z, \gamma, \pp}(\q) := \C(z) \q + \ha(z) \q + d(z) \nabla_{\q} h_{\gamma}(\q - \pp) \qquad \text{for every $\q \in \M^{n}_{D}$}\,.
\end{equation}
Then, there exist three constants~$C_{1}, C_{2}, C_{3} > 0$ independent of~$\gamma$ and of~$z$ and a constant~$C_{\gamma}>0$ (dependent on~$\gamma$ but not on~$z$) such that for every $z, z_{1}, z_{2} \in [0,1]$ and every $\pp, \pp_{1}, \pp_{2}, \q_{1}, \q_{2} \in \M^{n}_{D}$ the following holds:
\begin{align}
& | F_{z, \gamma, \pp} (\q_{1}) - F_{z, \gamma, \pp}(\q_{2}) | \leq C_{\gamma} | \q_{1} - \q_{2}| \,;\label{e.12}\\[1mm]
& \big( F_{z, \gamma, \pp}(\q_{1}) - F_{z, \gamma, \pp}(\q_{2}) \big){\, \cdot\,} \big( \q_{1} - \q_{2}\big) \geq C_{1}| \q_{1} - \q_{2}| ^{2}\,; \label{e.13} \\[1mm]
& \big( F_{z_{1}, \gamma, \pp_{1}} (\q_{1}) - F_{z_{2}, \gamma, \pp_{2}} (\q_{2}) \big){\, \cdot\,} \big(\q_{1} - \q_{2} \big) \geq C_{1} | \q_{1} - \q_{2}|^{2} \label{e.14} \\
& \qquad\qquad\qquad\qquad - C_{2} ( | \q_{2}| + 1 ) |z_{1} - z_{2}| | \q_{1} - \q_{2}| -  C_{3} \gamma |\q_{1} - \q_{2} | | \pp_{1} - \pp_{2}| \nonumber\,.
\end{align}
In particular, $F_{z, \gamma, \pp}$ is invertible and~$F_{z, \gamma, \pp}^{-1}\colon \M^{n}_{D} \to \M^{n}_{D}$ satisfies
\begin{equation}\label{e.15}
| F^{-1}_{z, \gamma, \pp} (\q_{1}) - F^{-1}_{z, \gamma, \pp} (\q_{2}) | \leq \widetilde{C} | \q_{1} - \q_{2} | \,.
\end{equation}
for a positive constant~$\widetilde{C}$ independent of~$z$,~$\gamma$, and~$\pp$.
\end{lemma}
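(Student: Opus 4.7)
The plan is to verify the four estimates in sequence, drawing exclusively on the $C^1$-regularity and two-sided bounds \eqref{eq:data0}--\eqref{e.hp-bound} on $\mu,\lambda,h,d$, on the bilinear coercivity estimates \eqref{e.C}--\eqref{e.H}, on the Lipschitz properties \eqref{e.h1}--\eqref{e.h2} of $h_\gamma$, and on the elementary observation that the explicit formula $\nabla_\q h_\gamma(\q) = \q(|\q|^2+\gamma^{-2})^{-1/2}$ yields $|\nabla_\q h_\gamma(\q)|\leq 1$ pointwise.

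For \eqref{e.12} I would just subtract $F_{z,\gamma,\pp}(\q_1)-F_{z,\gamma,\pp}(\q_2)$, bound the $\C$- and $\ha$-contributions by $(\beta_\C+\beta_\ha)|\q_1-\q_2|$, and use \eqref{e.h2} to control the difference of the $\nabla_\q h_\gamma$-terms by $2\gamma\beta|\q_1-\q_2|$, giving $C_\gamma := \beta_\C+\beta_\ha+2\gamma\beta$. For \eqref{e.13} I would test the same identity against $\q_1-\q_2\in\M^n_D$: the bilinear parts are bounded below by $(\alpha_\C+\alpha_\ha)|\q_1-\q_2|^2$ via \eqref{e.C}--\eqref{e.H}, while the $d(z)$-term is non-negative because $h_\gamma$ is convex, so $\nabla_\q h_\gamma$ is monotone and $[\nabla_\q h_\gamma(\q_1-\pp)-\nabla_\q h_\gamma(\q_2-\pp)]\cdot(\q_1-\q_2)\geq 0$. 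Hence one can take $C_1:=\alpha_\C+\alpha_\ha$.

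The main step for \eqref{e.14} is the triangle decomposition
\[
F_{z_1,\gamma,\pp_1}(\q_1)-F_{z_2,\gamma,\pp_2}(\q_2) = \bigl[F_{z_1,\gamma,\pp_1}(\q_1)-F_{z_1,\gamma,\pp_1}(\q_2)\bigr] + \bigl[F_{z_1,\gamma,\pp_1}(\q_2)-F_{z_2,\gamma,\pp_2}(\q_2)\bigr].
\]
Tested against $\q_1-\q_2$, the first bracket gives $C_1|\q_1-\q_2|^2$ by \eqref{e.13}. The second bracket splits into four contributions: $(\C(z_1)-\C(z_2))\q_2$ and $(\ha(z_1)-\ha(z_2))\q_2$, each bounded by $L|z_1-z_2||\q_2|$ by the $C^1$-regularity of the coefficients; $(d(z_1)-d(z_2))\nabla_\q h_\gamma(\q_2-\pp_2)$, bounded by $L_d|z_1-z_2|$ since $|\nabla_\q h_\gamma|\leq 1$; and $d(z_1)[\nabla_\q h_\gamma(\q_2-\pp_1)-\nabla_\q h_\gamma(\q_2-\pp_2)]$, bounded by $2\beta\gamma|\pp_1-\pp_2|$ via \eqref{e.h2}. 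A single Cauchy--Schwarz step closes the estimate, with $C_2$ absorbing the Lipschitz constants of $\C,\ha,d$ and $C_3:=2\beta$.

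Finally, for \eqref{e.15} I would observe that \eqref{e.12}--\eqref{e.13} express that $F_{z,\gamma,\pp}\colon\M^n_D\to\M^n_D$ is continuous, strictly monotone, and coercive on the finite-dimensional space $\M^n_D$, hence a bijection by the finite-dimensional Browder--Minty theorem (alternatively, a direct degree argument). Setting $\q_i := F_{z,\gamma,\pp}^{-1}(\eeta_i)$, \eqref{e.13} yields
\[
C_1|\q_1-\q_2|^2 \leq (\eeta_1-\eeta_2)\cdot(\q_1-\q_2) \leq |\eeta_1-\eeta_2|\,|\q_1-\q_2|,
\]
so $\widetilde C = 1/C_1$, independent of $z,\gamma,\pp$. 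I do not foresee any genuine analytic difficulty; the only aspect requiring care is the bookkeeping of the explicit $\gamma$-dependence in \eqref{e.14}, which is unavoidable because the Lipschitz constant of $\nabla_\q h_\gamma$ degenerates as $\gamma\to+\infty$ and is precisely encoded in the $\gamma$-prefactor of the statement.
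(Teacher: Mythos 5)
Your proposal is correct and follows essentially the same route as the paper. The paper delegates \eqref{e.12}, \eqref{e.13}, and \eqref{e.15} to \cite[Lemma~3.5]{Alm-Ste_20} and only spells out \eqref{e.14}, and for that estimate it uses exactly your triangle decomposition $F_{z_1,\gamma,\pp_1}(\q_1)-F_{z_2,\gamma,\pp_2}(\q_2)=[F_{z_1,\gamma,\pp_1}(\q_1)-F_{z_1,\gamma,\pp_1}(\q_2)]+[F_{z_1,\gamma,\pp_1}(\q_2)-F_{z_2,\gamma,\pp_2}(\q_2)]$ together with \eqref{e.13}, the Lipschitz continuity of $\C,\ha,d$, $|\nabla_\q h_\gamma|\le1$, and \eqref{e.h2}; your explicit fill-ins for the other three inequalities (coercivity plus monotonicity of $\nabla_\q h_\gamma$ for \eqref{e.13}, operator bounds plus \eqref{e.h2} for \eqref{e.12}, Browder--Minty surjectivity and $\widetilde C=1/C_1$ for \eqref{e.15}) match what the cited reference does.
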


\begin{proof}
Inequalities~\eqref{e.12},~\eqref{e.13}, and~\eqref{e.15} can be proved repeating the arguments of~\cite[Lemma~3.5]{Alm-Ste_20}. Let us prove~\eqref{e.14}. Let $z_{1}, z_{2} \in [0,1]$ and $\pp_{1}, \pp_{2}, \q_{1}, \q_{2} \in \M^{n}_{D}$. By a simple algebraic argument and by using~\eqref{e.13} we get
\begin{align}\label{e.16}
\big(  F_{z_{1},  \gamma, \pp_{1}} & (\q_{1}) - F_{z_{2}, \gamma, \pp_{2}} (\q_{2}) \big){\, \cdot\,} \big(\q_{1} - \q_{2} \big) 
\\
&
= \big( F_{z_{1}, \gamma, \pp_{1}} (\q_{1}) - F_{z_{1}, \gamma, \pp_{1}} (\q_{2}) \big){\, \cdot\,} \big(\q_{1} - \q_{2} \big) \nonumber
\\
&
\qquad + \big( F_{z_{1}, \gamma, \pp_{1}} (\q_{2}) - F_{z_{2}, \gamma, \pp_{2}} (\q_{2}) \big){\, \cdot\,} \big(\q_{1} - \q_{2} \big) \nonumber
\\
&
\geq C_{1} | \q_{1} - \q_{2} |^{2} + \big( F_{z_{1}, \gamma, \pp_{1}} (\q_{2}) - F_{z_{2}, \gamma, \pp_{2}} (\q_{2}) \big){\, \cdot\,} \big(\q_{1} - \q_{2} \big) \nonumber\,.
\end{align}
We now estimate the last term on the right-hand side of~\eqref{e.16} rewritten as
\begin{align}\label{e.17}
\big( F_{z_{1}, \gamma, \pp_{1}} & (\q_{2}) - F_{z_{2}, \gamma, \pp_{2}} (\q_{2}) \big){\, \cdot\,} \big(\q_{1} - \q_{2} \big)
\\
&
 = \big( \C(z_{1}) - \C(z_{2}) \big) \q_{2}{\, \cdot\,} \big( \q_{1} - \q_{2} \big)
 + \big( \ha(z_{1}) - \ha(z_{2}) \big) \q_{2}{\, \cdot\,} \big( \q_{1} - \q_{2} \big) \nonumber 
 \\
 &
 \qquad + \big(d(z_{1}) \nabla_{\q} h_{\gamma} ( \q_{2} - \pp_{1}) - d(z_{2}) \nabla_{\q} h_{\gamma} (\q_{2} - \pp_{2}) \big) {\, \cdot\,} \big( \q_{1} - \q_{2} \big) \nonumber\,.
\end{align}
By the Lipschitz continuity of~$\C$, $\ha$, and~$d$, and
by~\eqref{e.h1}--\eqref{e.h2}, we hence have
\begin{align}\label{e.18}
\big( F_{z_{1}, \gamma, \pp_{1}} & (\q_{2}) - F_{z_{2}, \gamma, \pp_{2}} (\q_{2}) \big){\, \cdot\,} \big(\q_{1} - \q_{2} \big)
\\
&
\geq - C_{2} \big( |\q_{2}| + 1 \big) |z_{1} - z_{2} | |\q_{1} - \q_{2}| - C_{3} \gamma | \pp_{1} - \pp_{2} | | \q_{1} - \q_{2} | \,, \nonumber
\end{align}
for some positive constant~$C_{2}, C_{3}$ independent of~$z$,
$\gamma$, $\pp_{1}$, and~$\pp_{2}$. Combining~\eqref{e.16}--\eqref{e.18} we
deduce~\eqref{e.14}. Relation \eqref{e.13} entails that~$F_{z,\gamma,\pp}$ is invertible and~\eqref{e.15} follows
from~\eqref{e.13} with $\tilde C = C_1^{-1}$.
\end{proof}

\begin{lemma}\label{l.2}
For every $\gamma \in (0,+\infty)$, every $ z  \in \R$, and every $\pp \in \M^{n}_{D}$, let the map $b_{ z , \gamma, \pp} \colon \M^{n}_{S} \to \M^{n}_{S}$ be defined as
\begin{equation}\label{e.bgamma}
b_{ z , \gamma, \pp}( \boldsymbol{\e}) \coloneq \C ( z ) \big ( \boldsymbol{\e}- F^{-1}_{ z , \gamma, \pp} ( \Pi_{\M^{n}_{D}} ( \C(  z ) \boldsymbol{\e} ) ) \big) \qquad \text{for every $\boldsymbol{\e} \in \M^{n}_{S}$},
\end{equation}
where $\Pi_{\M^{n}_{D}} \colon \M^{n} \to \M^{n}_{D}$ denotes the projection operator on~$\M^{n}_{D}$. Then, there exist two positive constants~$c_{1}, c_{2}$ such that for every~$\gamma \in (0, +\infty)$, every $ z  \in \R$, every $\pp \in \M^{n}_{D}$, and every $\boldsymbol{\e}_{1}, \boldsymbol{\e}_{2} \in \M^{n}_{S}$
\begin{eqnarray}
&& \displaystyle | b_{ z , \gamma, \pp} ( \boldsymbol{\e}_{1}) - b_{ z , \gamma, \pp} (\boldsymbol{\e}_{2} ) | \leq c_{1} |\boldsymbol{\e} _{1} - \boldsymbol{\e}_{2} | \,, \label{e.19}\\[1mm]
&& \displaystyle ( b_{ z , \gamma, \pp} (\boldsymbol{\e}_{1}) - b_{ z ,\gamma, \pp} (\boldsymbol{\e}_{2}) ) \cdot (\boldsymbol{\e}_{1} - \boldsymbol{\e}_{2} ) \geq c_{2} | \boldsymbol{\e}_{1} - \boldsymbol{\e}_{2} |^{2} \,. \label{e.20}
\end{eqnarray}
\end{lemma}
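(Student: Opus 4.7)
The proof rests on introducing, for each $\boldsymbol{\e} \in \M^{n}_{S}$, the auxiliary tensor $\q(\boldsymbol{\e}) := F^{-1}_{z, \gamma, \pp}(\Pi_{\M^{n}_{D}}(\C(z) \boldsymbol{\e}))$, so that $b_{z, \gamma, \pp}(\boldsymbol{\e}) = \C(z)(\boldsymbol{\e} - \q(\boldsymbol{\e}))$. Fix $\boldsymbol{\e}_{1}, \boldsymbol{\e}_{2} \in \M^{n}_{S}$ and set $\q_{i} := \q(\boldsymbol{\e}_{i})$, $\tilde{\boldsymbol{\e}} := \boldsymbol{\e}_{1} - \boldsymbol{\e}_{2}$, $\tilde{\q} := \q_{1} - \q_{2}$. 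I would decompose $\tilde{\boldsymbol{\e}} = \tilde{\boldsymbol{\e}}^{D} + \tilde{\boldsymbol{\e}}^{\perp}$, with $\tilde{\boldsymbol{\e}}^{D} := \Pi_{\M^{n}_{D}}(\tilde{\boldsymbol{\e}})$ and $\tilde{\boldsymbol{\e}}^{\perp} := \tfrac{1}{n}(\tr \tilde{\boldsymbol{\e}}) \, \I$, and exploit three immediate consequences of the isotropy~\eqref{eq:data3}: $\Pi_{\M^{n}_{D}}(\C(z) \tilde{\boldsymbol{\e}}) = 2\mu(z) \tilde{\boldsymbol{\e}}^{D}$, $\C(z) \tilde{\q} = 2\mu(z) \tilde{\q}$, and the orthogonal decomposition $\C(z) \tilde{\boldsymbol{\e}} \cdot \tilde{\boldsymbol{\e}} = 2\mu(z) |\tilde{\boldsymbol{\e}}^{D}|^{2} + (2\mu(z) + n\lambda(z)) |\tilde{\boldsymbol{\e}}^{\perp}|^{2}$.

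For the Lipschitz bound~\eqref{e.19}, I would combine the $1$-Lipschitz continuity of the projection $\Pi_{\M^{n}_{D}}$, the uniform bound~\eqref{e.C} on $\C$, and the uniform Lipschitz estimate~\eqref{e.15} for $F^{-1}_{z, \gamma, \pp}$ to obtain $|\tilde{\q}| \leq \widetilde{C} \beta_{\C} |\tilde{\boldsymbol{\e}}|$; the triangle inequality applied to $b_{z, \gamma, \pp}(\boldsymbol{\e}_{1}) - b_{z, \gamma, \pp}(\boldsymbol{\e}_{2}) = \C(z)(\tilde{\boldsymbol{\e}} - \tilde{\q})$ then yields~\eqref{e.19} with $c_{1} := \beta_{\C}(1 + \widetilde{C} \beta_{\C})$, uniform in $z$, $\gamma$, and $\pp$ since each of the three constants is.

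For the coercivity~\eqref{e.20}, the starting point is the identity $(b_{z, \gamma, \pp}(\boldsymbol{\e}_{1}) - b_{z, \gamma, \pp}(\boldsymbol{\e}_{2})) \cdot \tilde{\boldsymbol{\e}} = \C(z) \tilde{\boldsymbol{\e}} \cdot \tilde{\boldsymbol{\e}} - 2\mu(z) \tilde{\q} \cdot \tilde{\boldsymbol{\e}}^{D}$, and the key step is to bound $|\tilde{\q}|$ from above in terms of $|\tilde{\boldsymbol{\e}}^{D}|$. I would exploit the defining equations $F_{z, \gamma, \pp}(\q_{i}) = \Pi_{\M^{n}_{D}}(\C(z) \boldsymbol{\e}_{i})$: subtracting them and testing against $\tilde{\q}$, the convexity of $h_{\gamma}$ (giving $(\nabla_{\q} h_{\gamma}(\q_{1} - \pp) - \nabla_{\q} h_{\gamma}(\q_{2} - \pp)) \cdot \tilde{\q} \geq 0$) together with the isotropy of $\ha$ imply $2\mu(z) \tilde{\q} \cdot \tilde{\boldsymbol{\e}}^{D} \geq (2\mu(z) + h(z))|\tilde{\q}|^{2}$. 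Combined with Cauchy--Schwarz this forces $|\tilde{\q}| \leq \tfrac{2\mu(z)}{2\mu(z) + h(z)} |\tilde{\boldsymbol{\e}}^{D}|$, so that $2\mu(z) \tilde{\q} \cdot \tilde{\boldsymbol{\e}}^{D} \leq \tfrac{4\mu(z)^{2}}{2\mu(z) + h(z)} |\tilde{\boldsymbol{\e}}^{D}|^{2}$. Substituting and rearranging leaves $(b_{z, \gamma, \pp}(\boldsymbol{\e}_{1}) - b_{z, \gamma, \pp}(\boldsymbol{\e}_{2})) \cdot \tilde{\boldsymbol{\e}} \geq \tfrac{2\mu(z) h(z)}{2\mu(z) + h(z)} |\tilde{\boldsymbol{\e}}^{D}|^{2} + (2\mu(z) + n\lambda(z))|\tilde{\boldsymbol{\e}}^{\perp}|^{2}$, and the bounds~\eqref{e.hp-bound} produce the uniform $c_{2} > 0$.

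The main delicate point will be precisely this algebraic cancellation: the crude estimate $-2\mu(z) \tilde{\q} \cdot \tilde{\boldsymbol{\e}}^{D} \geq -2\mu(z) |\tilde{\q}| |\tilde{\boldsymbol{\e}}^{D}|$ together with the rougher bound $|\tilde{\q}| \leq |\tilde{\boldsymbol{\e}}^{D}|$ would only yield nonnegativity, not strict coercivity. It is genuinely the positive definiteness of the hardening tensor $\ha$ (which produces the factor $2\mu + h$ rather than just $2\mu$ in the denominator), and not the convexity of the regularized dissipation $h_{\gamma}$, that drives the strict coercivity; consequently $c_{2}$ is independent of $\gamma \in (0, +\infty)$ and of $\pp \in \M^{n}_{D}$, which is essential for the subsequent limit $\gamma \to +\infty$ in the differentiability argument of Theorem~\ref{t.1}.
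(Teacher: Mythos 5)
Your proof is correct, and since the paper only refers to \cite[Lemma~3.6]{Alm-Ste_20} for this statement, your argument in effect supplies the details that the paper leaves implicit. The structure — using Lemma~\ref{l.1} for the Lipschitz bound, then exploiting isotropy to split $\C(z)\tilde{\boldsymbol{\e}}\cdot\tilde{\boldsymbol{\e}}$ into its deviatoric and spherical contributions, subtracting the defining equations $F_{z,\gamma,\pp}(\q_i)=\Pi_{\M^n_D}(\C(z)\boldsymbol{\e}_i)$ and testing with $\tilde\q$, discarding the monotone $\nabla h_\gamma$ term, and concluding via the algebraic identity $2\mu-\frac{4\mu^2}{2\mu+h}=\frac{2\mu h}{2\mu+h}$ — is exactly the mechanism that makes the lemma work, and is the only reasonable route given the isotropy hypotheses~\eqref{eq:data3}. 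Two remarks worth highlighting: your closing observation that the strict deviatoric coercivity is driven by $h(z)>0$ (the positive definiteness of $\ha$), and not by the regularization $h_\gamma$ whose convexity is used only to discard a nonnegative term, is precisely what makes $c_1,c_2$ independent of $\gamma$ and $\pp$; this uniformity is what the paper needs to pass $\gamma\to+\infty$ in Section~\ref{s.optimality}. Also, one should recall that the bounds~\eqref{e.hp-bound} are stated for $z\in[0,1]$ but extend to all $z\in\R$ because the material coefficients are defined to be constant on $\{z\le 0\}$ and $\{z\ge 1\}$; this is needed to cover the stated range ``every $z\in\R$''.
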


\begin{proof}
The lemma can be proved as \cite[Lemma~3.6]{Alm-Ste_20}
by making use of the already established Lemma~\ref{l.1}.
\end{proof}

We are now in a position to prove an $L^{p}$-regularity estimate and a Lipschitz dependence on the phase-field variable for an approximate time-discrete quasistatic evolution. Before stating these results, we introduce the notation
\begin{align}\label{e.some-norm}
\| (u, \strain , \p) \|_{H^{1} \times L^{2} \times L^{2}} := \| u \|_{H^{1}} + \| \strain\|_{2} + \| \p \|_{2}
\end{align}
for $(u, \strain, \p) \in H^{1}(\Om; \R^{n}) \times L^{2}(\Om;
\M^{n}_{S}) \times L^{2}(\Om; \M^{n}_{D})$. The symbol $\| ( u,
\strain, \p)\|_{W^{1,r} \times L^{r} \times L^{r}}$ is used for $(u, \strain, \p) \in W^{1, r}(\Om; \R^{n}) \times L^{r}(\Om; \M^{n}_{S}) \times L^{r}(\Om; \M^{n}_{D})$. Finally, for $(u_{i}, \strain_{i}, \p_{i})_{i=0}^{k} \in \big( H^{1}(\Om; \R^{n}) \times L^{2}(\Om; \M^{n}_{S}) \times L^{2}(\Om; \M^{n}_{D})\big)^{k+1}$ the norm~$\| (u_{i}, \strain_{i}, \p_{i})_{i=0}^{k} \|_{( H^{1} \times L^{2} \times L^{2})^{k+1}}$ is defined by naturally extending~\eqref{e.some-norm}. The same is done in $\big(W^{1, r}(\Om; \R^{n}) \times L^{r}(\Om; \M^{n}_{S}) \times L^{r}(\Om; \M^{n}_{D}) \big)^{k+1}$.

\begin{lemma}\label{l.3}
Let $k \in \mathbb{N}$ and  $\gamma \in (0,+\infty)$. Then, there exists $\tilde{p} \in (2, p)$ such that the control-to-state operator~$S_{k, \gamma} \colon L^{\infty}(\Om) \to (H^{1}(\Om; \R^{n}) \times L^{2}(\Om; \M^{n}_{S}) \times L^{2}(\Om; \M^{n}_{D}))^{k+1}$ takes values in $(W^{1, \tilde{p}}(\Om; \R^{n}) \times L^{\tilde{p}}(\Om; \M^{n}_{S}) \times L^{\tilde{p}}(\Om; \M^{n}_{D}))^{k+1}$ and satisfies
\begin{align}\label{e.21}
& \| S_{k, \gamma} (z) \|_{(W^{1, \tilde{p}} \times L^{\tilde{p}} \times L^{\tilde{p}})^{k+1}}
\\
&
\qquad  \leq C \big( 1 + \| f\|_{L^{\infty}(0,T; L^{p}( \Om; \R^{n}))} + \| g\|_{L^{\infty}(0,T; L^{p}( \Gamma_{N}; \R^{n}))} + \| w \|_{L^{\infty}( [0 , T] ; W^{ 1 , p } ( \Om; \R^{n})) }\big) \nonumber 
\end{align}
for some positive constant~$C$ independent of~$i$,~$k$,~$\gamma$, and~$z$.

Furthermore, there exists a positive constant~$C_{\gamma, k}$ depending only on~$\gamma$ and~$k$ such that for every $z_{1}, z_{2} \in L^{\infty}(\Om)$ and every $q \in (2, \tilde{p}]$
\begin{align}\label{e.22}
& \| S_{k, \gamma} (z_{1}) - S_{k, \gamma}(z_{2}) \|_{(W^{1, q} \times L^q \times L^q)^{k+1}}
\\
&
\qquad  \leq C_{\gamma, k}  \big( 1 + \| f\|_{L^{\infty}(0,T; L^{p} (\Om; \R^{n}))} + \| g\|_{L^{\infty}(0,T; L^{p} (\Gamma_{N}; \R^{n}))}    \nonumber
\\
&
\qquad \qquad + \| w \|_{L^{\infty}( [0 , T] ; W^{ 1 , p } ( \Om; \R^{n}) )}\big) \| z_{1} - z_{2} \|_{\infty} \nonumber \,.
\end{align}
\end{lemma}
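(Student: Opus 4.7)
The plan is to reduce each step of the incremental problem~\eqref{er.3} to a quasilinear elliptic system for the displacement~$u_i$ of Gr\"oger type, exploit the regularity assumption on~$\Om\cup\Gamma_N$ to upgrade the natural $H^1$ regularity to $W^{1,\tilde p}$ for some $\tilde p\in(2,p)$, and iterate through $i=1,\dots,k$. More precisely, the first-order optimality conditions of~\eqref{er.3} consist of the equilibrium identity
\[
\int_\Om \C(z)\strain_i\cdot \e v\,\dd x = \int_\Om \ell(z) f_i^k\cdot v\,\dd x + \int_{\Gamma_N} g_i^k \cdot v\,\dd \HH^{n-1}
\]
for every $v\in H^1(\Om;\R^n)$ with $v=0$ on $\Gamma_D$, together with the discrete flow rule $\Pi_{\M^n_D}(\C(z)\strain_i) = \ha(z)\p_i + d(z)\nabla_\q h_\gamma(\p_i-\p_{i-1})$. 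Since $\C(z)$ is isotropic and hence preserves deviatoric tensors, the flow rule is equivalent to $\p_i = F^{-1}_{z,\gamma,\p_{i-1}}(\Pi_{\M^n_D}(\C(z)\e u_i))$, so that $\C(z)\strain_i = b_{z,\gamma,\p_{i-1}}(\e u_i)$. Substituting back produces a quasilinear system for $u_i$ whose operator $\boldsymbol\e\mapsto b_{z,\gamma,\p_{i-1}}(\boldsymbol\e)$ is Lipschitz and strongly monotone with constants uniform in $z$, $\gamma$ and~$\p_{i-1}$ by Lemma~\ref{l.2}.

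To prove~\eqref{e.21}, I would then invoke Gr\"oger's higher-integrability theorem\,---\,available thanks to the regularity assumption on $\Om\cup\Gamma_N$\,---\,to obtain a $\tilde p\in(2,p)$ independent of the data and of $z$, $\gamma$, $\p_{i-1}$ such that
\[
\|u_i\|_{W^{1,\tilde p}} \leq C\bigl(\|f_i^k\|_{L^{\tilde p}} + \|g_i^k\|_{L^{\tilde p}(\Gamma_N)} + \|w_i^k\|_{W^{1,\tilde p}} + \|b_{z,\gamma,\p_{i-1}}(\boldsymbol{0})\|_{L^{\tilde p}}\bigr).
\]
A short computation based on the identity $F_{z,\gamma,\pp}(\pp) = (\C(z)+\ha(z))\pp$ and on~\eqref{e.15} shows that $|b_{z,\gamma,\pp}(\boldsymbol{0})|\leq C|\pp|$ uniformly in $z$ and $\gamma$. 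Moreover, the pointwise bound $|\nabla_\q h_\gamma|<1$ coming from the structure of $h_\gamma$, combined with the flow rule, gives the a.e.\ estimate $|\p_i|\leq C(|\e u_i|+1)$, whence $\|\p_i\|_{L^{\tilde p}}\leq C(1 + \|u_i\|_{W^{1,\tilde p}})$ and $\|\strain_i\|_{L^{\tilde p}}\leq \|u_i\|_{W^{1,\tilde p}} + \|\p_i\|_{L^{\tilde p}}$. Since $\p_0=0$, induction on $i$ then yields~\eqref{e.21} with a constant independent of $i$, $k$, $\gamma$ and $z$.

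For the Lipschitz estimate~\eqref{e.22}, I would set $(u_i^j,\strain_i^j,\p_i^j):=S_{k,\gamma}(z_j)|_i$ for $j=1,2$ and subtract the two equilibrium identities. Testing the difference with $v=u_i^1-u_i^2\in\A(0)$ and combining the strong monotonicity~\eqref{e.20} of $b$ with its Lipschitz dependence on $(z,\pp)$\,---\,which follows from Lemma~\ref{l.1} and which carries the $\gamma$-dependent factor through~\eqref{e.14}\,---\,produces
\[
\|u_i^1-u_i^2\|_{H^1}^2 \leq C_\gamma\bigl(\|\p_{i-1}^1-\p_{i-1}^2\|_2^2 + (1+\|\e u_i^1\|_2^2)\|z_1-z_2\|_\infty^2\bigr),
\]
while the flow-rule representation of $\p_i^j$ together with~\eqref{e.14}--\eqref{e.15} yields a matching bound for $\|\p_i^1-\p_i^2\|_2$. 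Iterating over the $k$ time steps delivers the $H^1\times L^2\times L^2$-version of~\eqref{e.22} with a constant $C_{\gamma,k}$ that depends on both $\gamma$ and $k$. The $W^{1,q}\times L^q\times L^q$ upgrade for $q\in(2,\tilde p]$ is then obtained by reapplying Gr\"oger's theorem to the difference equation, the $L^{\tilde p}$-estimates of the first part controlling the remainder terms. The main obstacle is the careful bookkeeping across the $k$ time steps: the $\gamma$-Lipschitz dependence of $F_{z,\gamma,\pp}$ on $\pp$ encoded in~\eqref{e.14} propagates multiplicatively from step $i-1$ to step $i$ and is responsible for both the $\gamma$- and $k$-dependence of $C_{\gamma,k}$, whereas the uniformity asserted in~\eqref{e.21} crucially relies on the uniform-in-$\gamma$ bound $|\nabla_\q h_\gamma|<1$ to prevent the plastic strain from driving the induction.
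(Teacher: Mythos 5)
Your overall architecture matches the paper's: reduce each incremental problem to a Gr\"oger-type quasilinear system for $u_i$ via the discrete flow rule, apply the $W^{1,\tilde p}$-regularity result of Herzog, Meyer, and Wachsmuth, and iterate over $i$; and your treatment of~\eqref{e.22} via the $\gamma$-Lipschitz dependence encoded in~\eqref{e.14}, with $C_{\gamma,k}$ absorbing the multiplicative propagation across time steps, is also the paper's route.

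There is, however, a gap in your argument for~\eqref{e.21}. You bound $|b_{z,\gamma,\pp}(0)|\le C|\pp|$ via~\eqref{e.15} and the identity $F_{z,\gamma,\pp}(\pp)=(\C(z)+\ha(z))\pp$. Fed into the Gr\"oger estimate, this yields $\|u_i\|_{W^{1,\tilde p}}\le C\bigl(\|f^k_i\|_p+\|g^k_i\|_p+\|w^k_i\|_{W^{1,p}}+\|\p_{i-1}\|_{\tilde p}\bigr)$; combined with your $\|\p_i\|_{\tilde p}\le C(1+\|u_i\|_{W^{1,\tilde p}})$, the induction takes the form $\|\p_i\|_{\tilde p}\le A+B\|\p_{i-1}\|_{\tilde p}$ with $B$ generically $>1$, so the iterates grow geometrically in $i$ and the resulting constant depends on $k$, contradicting the $k$-independence claimed in~\eqref{e.21}. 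The fix is to replace your bound on $b_{z,\gamma,\pp}(0)$ by one that is \emph{uniform in~$\pp$}: setting $\qq:=F^{-1}_{z,\gamma,\pp}(0)$, the identity $\C(z)\qq+\ha(z)\qq=-d(z)\nabla_{\q}h_\gamma(\qq-\pp)$ together with the pointwise bound $|\nabla_{\q}h_\gamma|\le1$ gives $(\alpha_\C+\alpha_\ha)|\qq|^2\le d(z)|\qq|$, hence $|\qq|\le\beta/(\alpha_\C+\alpha_\ha)$ independently of $z$, $\gamma$, and $\pp$. This is precisely the same $|\nabla_{\q}h_\gamma|\le1$ device you already invoke to get $|\p_i|\le C(|\e u_i|+1)$; applying it to $F^{-1}_{z,\gamma,\pp}(0)$ as well makes $\|b_{z,\gamma,\p_{i-1}}(\cdot,0)\|_{\tilde p}$ uniformly bounded and renders the iteration trivially uniform in $i$ and $k$, which is what the paper does.
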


\begin{proof}
The proof of~\eqref{e.21}--\eqref{e.22} follows from an application of~\cite[Theorem~1.1]{Herzog}. To apply such result, we first have to recast the Euler-Lagrange equations associated to the equilibrium condition~\eqref{e.5} in terms of the sole displacement variable~$u$.

Let us fix $\gamma>0$ and~$z \in L^{\infty}(\Om)$. For simplicity of notation, let $(u_{i}, \strain_{i}, \p_{i})_{i=0}^{k} = S_{k, \gamma}(z)$ and $(u^{j}_{i}, \strain^{j}_{i}, \p^{j}_{i})_{i=0}^{k} = S_{k, \gamma} (z_{j})$, $j =1, 2$. We further recall the definition of~$f^{k}_{i}$, $g^{k}_{i}$, and~$w^{k}_{i}$ given in~\eqref{e.fgw}  and that $(u_{0}, \strain_{0}, \p_{0})= (u^{j}_{0}, \strain^{j}_{0}, \p^{j}_{0}) = (0,0,0)$.

 From the minimization problem~\eqref{e.5} we deduce the following Euler-Lagrange equation: for every~$ (v, \eeta, \qq) \in \A(0)$ and every $i=1, \ldots, k$
\begin{align}\label{e.EL}
&\int_{\Om} \C(z) (\e u_{i} - \p_{i}){\, \cdot\,} \eeta \, \di x  + \int_{\Om} \ha(z) \p_{i} {\, \cdot\,} \qq \, \di x + \int_{\Om} d(z) \nabla_{\q} h_{\gamma} ( \p_{i} - \p_{i-1} ){\, \cdot\,} \qq \, \di x
\\
& \qquad
 - \int_{\Om}  \ell(z)  f_{i}^{k}{\, \cdot\, } v \, \di x - \int_{\Gamma_{N}} g_{i}^{k} { \, \cdot\,} v \,\di  \HH^{n-1} = 0 \,. \nonumber
\end{align}
By testing~\eqref{e.EL} with $(0, \eeta, -\eeta) \in \A(0)$ for $\eeta \in L^{2}(\Om; \M^{n}_{D})$ we get that
\begin{equation}\label{e.23}
\C ( z) \p_{i} + \ha(z) \p_{i} + d(z) \nabla_{\q} h_{\gamma} ( \p_{i} - \p_{i-1} ) = \Pi_{\M^{n}_{D}}(  \C (z) \e u_{i}) \qquad \text{a.e.~in~$\Om$}\,.
\end{equation}
In view of the definition~\eqref{e.Fgamma} of~$F_{ z , \gamma, \pp}$, we  have $F_{z(x), \gamma, \p_{i-1}(x) } (\p_{i} (x) ) = \Pi_{\M^{n}_{D} }\big ( \C(z(x)) \e u_{i} (x) \big)$ and $\p_{i} (x) = F_{z(x), \gamma, \p_{i-1}(x) }^{-1} \big (\Pi_{\M^{n}_{D}} \big ( \C(z(x)) \e u_{i} (x) \big)\big)$ for a.e.~$x \in \Om$.

Recalling definition~\eqref{e.bgamma}, we define for $x \in \Om$,~$\boldsymbol{\e} \in \M^{n}_{S}$, and $i=1, \ldots, k$,
\begin{displaymath}
b_{z, \gamma, \p_{i-1}} (x, \boldsymbol{\e}) \coloneq b_{z(x), \gamma, \p_{i-1}(x)} (\boldsymbol{\e}) =  \C (z(x)) \big (\boldsymbol{\e} - F^{-1}_{z(x), \gamma, \p_{i-1}(x)} (\Pi_{\M^{n}_{D}} (\C(z(x)) \boldsymbol{\e} ) \big) \,.
\end{displaymath}
From now on, when not explicitly needed, we drop the dependence on the spatial variable~$x \in \Om$ in the definition of~$F_{z, \gamma,\p_{i-1}}^{-1}$, since all the arguments discussed below are valid uniformly in~$\Om$. We rewrite the Euler-Lagrange equation~\eqref{e.EL} in terms of the sole displacement~$u_{i}$ and for test functions of the form $(\psi, \e \psi, 0) \in \A(0)$ for $\psi \in H^{1} (\Om; \R^{n})$ with $\psi = 0$ on~$\Gamma_{D}$:
\begin{equation}\label{e.EL2}
\int_{\Om} b_{z, \gamma, \p_{i-1}} (x, \e u_{i}) {\, \cdot\,} \e \psi \, \di x = \int_{\Om}  \ell(z)  f^{k}_{i} {\, \cdot\, } \psi \, \di x + \int_{\Gamma_{N}} g^{k}_{i} {\, \cdot\,} \psi \, \di \HH^{n-1} \,.
\end{equation}

In view of Lemma~\ref{l.2}, the nonlinear operator~$B_{z, \gamma, \p_{i-1}} \colon W^{1, p}( \Om; \R^{n}) \to W^{-1, p}( \Om; \R^{n})$ defined as $B_{z, \gamma,\p_{i-1}} (u) \coloneq b_{z, \gamma, \p_{i-1}} (x, \e u)$ satisfies the hypotheses of~\cite[Theorem~1.1]{Herzog}. Since $\Om \cup \Gamma_{N}$ is Gr\"oger regular, $ p \in (2, +\infty)$, $f^{k}_{i} \in L^{p} (\Om ; \R^{n})$, $g^{k}_{i} \in L^{p}(\Gamma_{N}; \R^{n})$, and $w^{k}_{i} \in W^{1, p}(\Om; \R^{n})$, we infer from~\cite[Theorem~1.1]{Herzog} applied to equation~\eqref{e.EL2} that there exist~$\tilde{p} \in (2, p)$ and a constant~$C>0$  (both independent of~$i$ and~$k$) such that
 \begin{equation}\label{e.24}
 \| u_{i} \|_{W^{1, q}} \leq C ( \| f^{k}_{i} \|_{p} + \| g_{i}^{k} \|_{p} + \| w^{k}_{i} \|_{W^{1, p}})
 \end{equation}
 for every $q \in (2, \tilde{p}]$. In particular, $C$ is independent of~$z \in L^{\infty}(\Om)$, of~$\gamma \in (0,+\infty)$, of~$k\in \mathbb{N}$, and of~$q \in (2, \tilde{p}]$. Inequality~\eqref{e.21}  can be deduced by combining~\eqref{e.15} and~\eqref{e.24}. Indeed, we have that
 \begin{align}\label{e.25}
\| \p_{i} \|_{q} & = \| F^{-1}_{z, \gamma, \p_{i-1}} (\Pi_{\M^{n}_{D}} (\C(z) \e u_{i})) \|_{q} 
\\
&
\leq \| F^{-1}_{z, \gamma, \p_{i-1}} (\Pi_{\M^{n}_{D}} (\C(z) \e u_{i})) - F^{-1}_{z, \gamma, \p_{i-1}} (0) \|_{q} + \| F^{-1}_{z, \gamma, \p_{i-1}} (0) \|_{q}  \nonumber
\\
&
\leq \widetilde{C} \| \Pi_{\M^{n}_{D}} (\C(z) \e u_{i}) \|_{q} + \| F^{-1}_{z, \gamma, \p_{i-1}} (0) \|_{q} \leq \overline{C} \| u_{i} \|_{W^{1, q}} +  \| F^{-1}_{z, \gamma, \p_{i-1}} (0) \|_{q}\,. \nonumber
 \end{align}
 To conclude the estimate, we notice that if $\qq \coloneq F^{-1}_{z, \gamma, \p_{i-1}} (0)$, we have that
 \begin{displaymath}
 \C(z) \qq + \ha(z) \qq = -d(z) \nabla_{\q} h_{\gamma} (\qq - \p_{i-1}) = - d(z) \frac{ \qq - \p_{i-1} }{ \sqrt{ | \qq - \p_{i-1} |^{2} + \frac{1}{\gamma^{2}}}} \,.
 \end{displaymath} 
 Multiplying the previous expression by~$\qq$ and using~\eqref{e.C}--\eqref{e.H} we deduce that 
 \begin{displaymath}
 (\alpha_{\C} + \alpha_{\ha}) | \qq |^{2}  \leq d(z) | \qq |
 \end{displaymath}
a.e.~in~$\Om$.
 Hence,~$  \| F^{-1}_{z, \gamma, \p_{i-1}} (0) \|_{q}$ is bounded uniformly w.r.t.~$i$,~$k$,~$\gamma$, and~$z$. Thus, combining~\eqref{e.24}--\eqref{e.25} we infer~\eqref{e.21} by the triangle inequality.
 
 In order to prove~\eqref{e.22}, we first rewrite the Euler-Lagrange equation~\eqref{e.EL2} satisfied by~$u^{2}_{i}$, $i=1, \ldots, k$. Namely, for every~$\psi \in W^{1, \tilde{p}'}(\Om; \R^{n})$ with~$\psi = 0 $ on~$\Gamma_{D}$ we have, after a simple algebraic manipulation,
 \begin{align}\label{e.27}
 \int_{\Om} & B_{z_{1}, \gamma, \p^{1}_{i-1} } (u^{2}_{i}) {\, \cdot \, } \e \psi \, \di x 
 \\
 &
 =  \int_{\Om} \big( \C (z_{1}) - \C(z_{2}) \big) \big( \e u^{2}_{i} - F^{-1}_{ z_{1} , \gamma, \p^{1}_{i-1}} (\Pi_{\M^{n}_{D}} ( \C ( z_{1} ) \e u^{2}_{i})) \big) \cdot \e \psi \, \di x \nonumber
 \\
 &
\qquad  + \int_{\Om} \C( z_{2}  ) \big(  F_{z_{2}, \gamma, \p^{2}_{i-1}}^{-1} ( \Pi_{\M^{n}_{D}} (\C(z_{2}) \e u^{2}_{i}) ) - F^{-1}_{z_{1}, \gamma, \p^{1}_{i-1}} ( \Pi_{\M^{n}_{D}} (\C(z_{1}) \e u^{2}_{i}) ) \big) \cdot \e \psi \, \di x \nonumber
% \\
% &
% \qquad + \int_{\Om} \C(z_{1}) \big( F_{z_{2}, \gamma, \p^{1}_{i-1}}^{-1} ( \Pi_{\M^{n}_{D}} (\C(z_{2}) \e u_{2})) - F^{-1}_{z_{1}, \gamma, \p^{1}_{i-1}} (\Pi_{\M^{n}_{D}} (\C (z_{2}) \e u^{2}_{i} ) ) \big) \cdot \e \psi \, \di x \nonumber
% \\
% &
% \qquad + \int_{\Om} \C(z_{1}) \big( F^{-1}_{z_{1}, \gamma, \p^{1}_{i-1}} (\Pi_{\M^{n}_{D}} (\C (z_{2}) \e u^{2}_{i} ) ) - F^{-1}_{z_{1}, \gamma, \p^{2}_{i-1}} (\Pi_{\M^{n}_{D}} (\C (z_{2}) \e u^{2}_{i} ) ) \big) \cdot \e \psi \, \di x \nonumber
% \\
% &
% \nonumber
 \\
 &
\qquad  + \int_{\Om} \ell(z_{2}) f^{k}_{i} {\, \cdot \, } \psi \, \di x  + \int_{\Gamma_{N}} g^{k}_{i} {\, \cdot\,} \psi \, \di \HH^{n-1} \,. \nonumber
 \end{align}

Comparing~\eqref{e.27} with~\eqref{e.EL2} written for~$z_{1}$ and
$(u^{1}_{i}, \strain^{1}_{i}, \p^{1}_{i})$, we deduce that~$u^{1}_{i}$
and~$u^{2}_{i}$ solve the same kind of equation, with a different
right-hand side, which however always belongs to $W^{-1,\tilde{p}}(\Om; \R^{n})$. Thus, applying once more~\cite[Theorem~1.1]{Herzog}, we infer that there exists~$C>0$ independent of~$z_{1}, z_{2}$, of~$\gamma$, of~$i$, and of~$k$, such that for every $q \in (2, \tilde{p}]$
\begin{align}\label{e.28}
\| u^{1}_{i} - u^{2}_{i} \|_{W^{1, q}} & \leq C \Big(  \left \| \big( \C (z_{1}) - \C (z_{2}) \big) \big( \e u^{2}_{i} - F^{-1}_{z_{2}, \gamma, \p^{2}_{i-1}} ( \Pi_{\M^{n}_{D}} ( \C (z_{2}) \e u^{2}_{i} )) \big) \right \|_{W^{- 1, q}}
\\
&
\quad + \left\| \C(z_{1}) \big( F_{z_{2}, \gamma, \p^{2}_{i-1}}^{-1} ( \Pi_{\M^{n}_{D}} (\C(z_{2}) \e u^{2}_{i} ) ) - F^{-1}_{z_{1}, \gamma, \p^{1}_{i-1}} ( \Pi_{\M^{n}_{D}} (\C(z_{1}) \e u^{2}_{i} ) ) \big) \right \|_{W^{-1, q}}  \nonumber
%\\
%&
%\qquad+ \left \| \C(z_{1}) \big( F_{z_{2}, \gamma, \p^{2}_{i-1}}^{-1} ( \Pi_{\M^{n}_{D}} (\C(z_{2}) \e u^{2}_{i} )) - F^{-1}_{z_{1}, \gamma} (\Pi_{\M^{n}_{D}} (\C (z_{2}) \e u_{2} ) ) \big) \right \|_{W^{-1, q}} \nonumber
\\
&
\quad + \left \| \big(\ell(z_{1}) - \ell(z_{2}) \big) f^{k}_{i} \right \|_{W^{-1, q}}\Big)
=: C ( I_{1} + I_{2} + I_{3}) \,. \nonumber
\end{align}
By the Lipschitz continuity of~$\C$, by the identification $\p^{2}_{i} = F^{-1}_{z_{2}, \gamma, \p^{2}_{i-1}} ( \Pi_{\M^{n}_{D}} (\C (z_{2} ) \e u^{2}_{i}) )$, by the H\"older inequality, and by~\eqref{e.21} we deduce that
\begin{align}\label{e.30}
I_{1} & \leq C \| z_{1} - z_{2} \|_{\infty} \big( \| u_{i}^{2}\|_{W^{1, \tilde{p}}} + \| \p^{2}_{i} \|_{\tilde{p}} \big)
\\
&
 \leq C \| z_{1} - z_{2} \|_{\infty} \big( 1 + \| f\|_{L^{\infty}(0,T; L^{p} ( \Om; \R^{n}) )} + \| g\|_{L^{\infty}(0,T; L^{p} ( \Gamma_{N}; \R^{n}))}  \nonumber
 \\
 &
 \qquad + \| w \|_{L^{\infty}( [0 , T] ; W^{ 1 , p } (\Om; \R^{n}) )}\big) \nonumber \,.
\end{align}

%By the Lipschitz continuity~\eqref{e.15} of $F^{-1}_{ z ,\gamma}$, by the H\"older inequality, and by~\eqref{e.19} we deduce that
%\begin{align}\label{e:I1}
%I_{1} & \leq C \| (\C (z_{1}) - \C(z_{2})) \e u_{2} \|_{q} \leq C \| z_{1} - z_{2} \|_{r} \| u \|_{W^{1, \tilde{p}}} 
%\\
%&
%\leq C ( \| f \|_{p} + \| g \|_{p} + \| w \|_{W^{1, p}}) \| z_{1} - z_{2} \|_{r}   \,, \nonumber
%\end{align}
%where $1/r + 1/\tilde{p} = 1/q$.

Rewriting~\eqref{e.14} for $\pp_{j} = \p^{j}_{i-1}$ and  $\q_{j} = F^{-1}_{z_{j} , \gamma, \p^{j}_{i-1}} ( \Pi_{\M^{n}_{D}} (\C (z_{j}) \e u^{2}_{i}) ) $ we get that for a.e.~$x \in \Om$
\begin{align}\label{e:smth}
C_{1} |&  F^{-1}_{z_{1}, \gamma, \p^{1}_{i-1}}  ( \Pi_{\M^{n}_{D}}  (\C (z_{1}) \e u^{2}_{i}) ) - F^{-1}_{z_{2}, \gamma, \p^{2}_{i-1}} (\Pi_{\M^{n}_{D}} (\C (z_{2}) \e u^{2}_{i}) ) | 
\\
&
\qquad \leq  {\rm Lip}(\C) | \e u^{2}_{i} | | z_{1} - z_{2}| + C_{2} \big( | F^{-1}_{z_{2}, \gamma, \p^{2}_{i-1}} (\Pi_{\M^{n}_{D}} (\C (z_{2}) \e u^{2}_{i}) ) | + 1 \big) \, |z_{1} - z_{2} | \nonumber
\\
&
\qquad\qquad + C_{3} \gamma  | \p^{1}_{i-1} - \p^{2}_{i-1} |\,. \nonumber
\end{align}
The  identification $\p^{2}_{i} = F^{-1}_{z_{2}, \gamma, \p^{2}_{i-1}} ( \Pi_{\M^{n}_{D}} (\C (z_{2} ) \e u^{2}_{i}) )$ and inequalities~\eqref{e.21} and~\eqref{e:smth} imply that
\begin{align}\label{e.31}
I_{2} & \leq C ( \| u^{2}_{i}\|_{W^{1, \tilde{p}}} + \| \p^{2}_{i} \|_{\tilde{p}} + 1) \| z_{1} - z_{2} \|_{\infty} + C_{3} \gamma \| \p^{1}_{i-1} - \p^{2}_{i-1} \|_{q}
\\
&
 \leq C ( \| f \|_{L^{\infty}(0,T; L^{p}(\Om; \R^{n}))} + \| g \|_{L^{\infty}(0,T; L^{p}(\Gamma_{N}; \R^{n}))} \nonumber
 \\
 &
 \qquad   + \| w \|_{L^{\infty}(0,T; W^{1, p}(\Om; \R^{n}))} + 1 ) \| z_{1} - z_{2} \|_{\infty}  + C_{3} \gamma \| \p^{1}_{i-1} - \p^{2}_{i-1}\|_{q} \nonumber\,.
\end{align}

Finally, by the Lipschitz continuity of~$\ell$ we conclude that 
\begin{equation}\label{e.32}
I_{3} \leq C \| f \|_{L^{\infty}(0,T; L^{p}(\Om; \R^{n}))} \| z_{1} - z_{2} \|_{\infty} \,.
\end{equation}
Combining inequalities~\eqref{e.28}--\eqref{e.32} we infer that
\begin{align}\label{e.33}
\| u^{1}_{i} - u^{2}_{i} \|_{W^{1,q}} \leq & \ C \Big( \| f \|_{L^{\infty}(0,T; L^{p}(\Om; \R^{n}))} + \| g \|_{L^{\infty}(0,T; L^{p}(\Gamma_{N}; \R^{n}))} 
\\
&
+ \| w \|_{L^{\infty}(0,T; W^{1, p}(\Om; \R^{n}))} + 1 \Big) \| z_{1} - z_{2} \|_{\infty}  
  + C_{3} \gamma \| \p^{1}_{i-1} - \p^{2}_{i-1}\|_{q}  \nonumber\,.
\end{align}
We notice that inequality~\eqref{e.14} tested with
\begin{displaymath}
\q_{j} = F^{-1}_{z_{j}(x), \gamma, \p^{j}_{i-1}(x)} \big( \Pi_{\M^{n}_{D}} ( \C(z_{j}(x)) \e u^{j}_{i}(x)) \big) = \p^{j}_{i}(x) \qquad \text{for a.e.~$x \in \Om$}
\end{displaymath}
and integrated over~$\Om$ implies
\begin{align}\label{e.34}
 \| \p^{1}_{i} - \p^{2}_{i} \|_{q} & \leq C \big( \| u^{1}_{i} - u^{2}_{i} \|_{W^{1, q}} + (\| u^{2}_{i}\|_{W^{1, \tilde{p}}} + \| \p^{2}_{i}\|_{L^{\tilde{p}}} + 1) \| z_{1} - z_{2} \|_{\infty}  \big) 
 \\
 &
 \qquad + C_{3} \gamma \| \p^{1}_{i-1} - \p^{2}_{i-1}\|_{q} \nonumber
 \\
 &
 \leq C \| u^{1}_{i} - u^{2}_{i} \|_{W^{1, q}}  + C_{3} \gamma \| \p^{1}_{i-1} - \p^{2}_{i-1}\|_{q} \nonumber
 \\
 &
 \qquad + C ( \| f \|_{L^{\infty}(0,T; L^{p}(\Om; \R^{n}))} + \| g \|_{L^{\infty}(0,T; L^{p}(\Gamma_{N}; \R^{n}))}  \nonumber
\\
&
\qquad + \| w \|_{L^{\infty}(0,T; W^{1, p}(\Om; \R^{n}))} + 1 )  \| z_{1} - z_{2} \|_{\infty}   \nonumber
\\
&
\leq  C ( \| f \|_{L^{\infty}(0,T; L^{p}(\Om; \R^{n}))} + \| g \|_{L^{\infty}(0,T; L^{p}(\Gamma_{N}; \R^{n}))}  \nonumber
\\
&
\qquad + \| w \|_{L^{\infty}(0,T; W^{1, p}(\Om; \R^{n}))} + 1 )  \| z_{1} - z_{2} \|_{\infty} + C \gamma \| \p^{1}_{i-1} - \p^{2}_{i-1}\|_{q} \,. \nonumber
 \end{align}
 By the triangle inequality, an estimate similar to~\eqref{e.34} holds for $\strain^{1}_{i} - \strain^{2}_{i}$, for every $i=1, \ldots, k$.
% 
% Finally, combining~\eqref{e.32} written for~$i$ and~\eqref{e.34} written for~$i-1$ we get that
% \begin{align}\label{e.35}
%\| u^{1}_{i} - u^{2}_{i} \|_{W^{1,q}} \leq & \ C(1+\gamma) ( \| f \|_{L^{\infty}(0,T; L^{p}(\Om; \R^{n}))} + \| g \|_{L^{\infty}(0,T; L^{p}(\Gamma_{N}; \R^{n}))} 
%\\
%&
%+ \| w \|_{L^{\infty}(0,T; W^{1, p}(\Om; \R^{n}))} + 1 ) \| z_{1} - z_{2} \|_{r}  
%  + C \gamma \| u^{1}_{i-1} - u^{2}_{i-1}\|_{W^{1,q}}  \nonumber\,.
% \end{align}
 Iterating the inequalities~\eqref{e.33}--\eqref{e.34} for $l = 1, \ldots, i$ and taking into account that $( u^{1}_{0}, \strain^{1}_{0}, \p^{1}_{0})  = ( u^{2}_{0}, \strain^{2}_{0}, \p^{2}_{0} ) = ( 0, 0, 0 )$, we obtain~\eqref{e.22}. This concludes the proof of the lemma.
\end{proof}

We are now ready to prove Theorem~\ref{t.1}. The proof follows the lines of the proofs of~\cite[Theorem~3.1]{Alm-Ste_20} and of~\cite[Theorem~3.3]{Blank2}. The main difference is that, as in~\cite{Wachsmut2}, the forward problem is now time dependent and not static.

\begin{proof}[Proof of Theorem~\ref{t.1}]
Let us  fix $k \in \mathbb{N}$,~$\gamma \in(0,+\infty)$, and~$z, \varphi \in L^{\infty}(\Om)$. For $t \in \R$, let~$z_{t} \coloneq z + t\varphi$,~$(u^{k}_{i,t}, \strain^{k}_{i,t}, \p^{k}_{i,t})_{i=1}^{k} \coloneq S_{k, \gamma}( z_{t})$. The solution for~$t=0$ will be simply denoted by~$(u^{k}_{i}, \strain^{k}_{i}, \p^{k}_{i})_{i=1}^{k}$. Moreover, let~$(v^{k, \varphi}_{i}, \eeta^{k, \varphi}_{i}, \qq^{k, \varphi}_{i})$ be the solution of the recursive minimization problem~\eqref{e.der} and set
\begin{displaymath}
\overline{v}^{k}_{i, t} \coloneq u^{k}_{i, t} - u_{i}^{k} - t  v^{k, \varphi}_{i}  \,, \qquad \overline{\eeta}^{k}_{i, t} \coloneq \strain^{k}_{i, t} - \strain^{k}_{i} - t \eeta^{k, \varphi}_{i}  \,, \qquad \overline{\qq}^{k}_{i, t} \coloneq \p^{k}_{i, t} - \p^{k}_{i} - t \qq^{k, \varphi}_{i} \,.
\end{displaymath} 
We want to show that
\begin{equation}\label{e.35}
\| ( \overline{v}^{k}_{i, t}, \overline{\eeta}^{k}_{i, t}, \overline{\qq}^{k}_{i, t}) \|_{H^{1} \times L^{2} \times L^{2}} = o ( t ) \,,
\end{equation}
uniformly w.r.t.~$i = 1, \ldots, k$. In particular,~\eqref{e.35} implies the Frech\'et differentiability of the control-to-state map~$S_{k, \gamma}$.

We prove~\eqref{e.35} by induction on~$i$. For $i=1$,~\eqref{e.35}
follows from~\cite[Theorem~3.1]{Alm-Ste_20}, as the initial value
is~$(u^{k}_{0}, \strain^{k}_{0}, \p^{k}_{0}) = (0,0,0)$ by the assumptions on the data~$f(0) = g(0) = w(0) = 0$. For $i>1$, assume that $\| ( \overline{v}^{k}_{i-1, t}, \overline{\eeta}^{k}_{i-1, t}, \overline{\qq}^{k}_{i-1, t}) \|_{H^{1} \times L^{2} \times L^{2}} = o ( t )$. Writing the Euler-Lagrange equations satisfied by~$(u^{k}_{i, t}, \strain^{k}_{i, t}, \p^{k}_{i, t})$, $(u^{k}_{i}, \strain^{k}_{i}, \p^{k}_{i} )$, and $(v^{k, \varphi}_{i}, \eeta^{k, \varphi}_{i}, \qq^{k, \varphi}_{i})$ and subtracting the second and the third from the first one, we obtain, for every $(v, \eeta, \qq) \in \A(0)$,
\begin{align*}
\int_{\Om}  \C(z_{t})  \strain^{k}_{i, t} {\, \cdot\, } \eeta \, \di x & - \int_{\Om} \C (z) \strain^{k}_{i} {\, \cdot\,} \eeta \, \di x - t \int_{\Om} \C(z) \eeta^{k, \varphi}_{i}  {\, \cdot\,} \eeta \, \di x - t \int_{\Om} (\C'(z) \varphi) \strain^{k}_{i} {\, \cdot \, } \eeta \, \di x 
\\
&
+ \int_{\Om} \ha (z_{t}) \p^{k}_{i, t} {\, \cdot\,} \qq \, \di x
 - \int_{\Om} \ha (z) \p^{k}_{i} {\, \cdot\,} \qq \, \di x 
 - t \int_{\Om} \ha(z)  \qq^{k, \varphi}_{i}  {\, \cdot \,} \qq \, \di x  
 \\
 &
 - t  \int_{\Om} (\ha'(z) \varphi) \p^{k}_{i} {\, \cdot\,} \qq \, \di x + \int_{\Om} d(z_{t} ) \nabla_{\q} h_{\gamma}( \p^{k}_{i, t} - \p^{k}_{i-1, t} ) {\,\cdot\,} \qq \, \di x 
 \\
 &
 - \int_{\Om} d (z ) \nabla_{\q} h_{\gamma} (\p^{k}_{i} - \p^{k}_{i-1} ) {\, \cdot\,} \qq \, \di x - t \int_{\Om} \varphi \, d'(z) \nabla_{\q} h_{\gamma} (\p^{k}_{i} - \p^{k}_{i-1} ) {\, \cdot\,} \qq \, \di x 
 \\
 &
 - t \int_{\Om} d(z)  \nabla^{2}_{\q} h_{\gamma} (\p^{k}_{i} - \p^{k}_{i-1} ) (\qq^{k, \varphi}_{i} - \qq^{k, \varphi}_{i-1}) { \, \cdot \,} \qq \, \di x 
 - \int_{\Om} \ell(z_{t}) f^{k}_{i} {\, \cdot\,} v \, \di x 
 \\
 &
 + \int_{\Om} \ell(z) f^{k}_{i} {\, \cdot\,} v \, \di x + t \int_{\Om} \varphi\, \ell'(z)  f^{k}_{i} {\, \cdot\,} v \, \di x = 0 \,.
\end{align*}
By a simple algebraic manipulation, we rewrite the previous equality as
\begin{align}\label{e.36}
&0  =  \bigg(\int_{\Om}  \C(z_{t}) \strain^{k}_{i, t} {\, \cdot\, } \eeta \, \di x - \int_{\Om} \C (z) \strain^{k}_{i} {\, \cdot\,} \eeta \, \di x - t \int_{\Om} \C(z)  \eeta^{k, \varphi}_{i}  {\, \cdot\,} \eeta \, \di x
\\
&
\qquad  - t \int_{\Om} (\C'(z) \varphi) \strain^{k}_{i} {\, \cdot \, } \eeta \, \di x \bigg) \nonumber
\\
&
\qquad + \bigg( \int_{\Om} \ha (z_{t}) \p^{k}_{i, t} {\, \cdot\,} \qq \, \di x  - \int_{\Om} \ha (z) \p^{k}_{i} {\, \cdot\,} \qq \, \di x -  t  \int_{\Om} \ha(z)  \qq^{k, \varphi}_{i}  {\, \cdot \,} \qq \, \di x \nonumber 
\\
&
\qquad  -  t \int_{\Om} (\ha'(z) \varphi) \p^{k}_{i} {\, \cdot\,} \qq \, \di x  \bigg) \nonumber
\\
&
\qquad + \bigg( \int_{\Om}  \big( d(z_{t}) - d(z) - t \varphi \, d' (z) \big) \nabla_{\q} h_{\gamma} (\p^{k}_{i} - \p^{k}_{i-1} ) {\, \cdot\,} \qq \, \di x  \nonumber 
\\
&
\qquad + \int_{\Om} \big( d(z_{t}) - d( z) \big) \big( \nabla_{\q} h_{\gamma} (\p^{k}_{i, t} - \p^{k}_{i-1, t} ) - \nabla_{\q} h_{\gamma} ( \p^{k}_{i} - \p^{k}_{i-1} ) \big) \cdot \qq \, \di x  \bigg) \nonumber
\\ 
&
\qquad + \bigg( \int_{\Om} d(z) \big(\nabla_{\q} h_{\gamma} (\p^{k}_{i, t} - \p^{k}_{i-1, t} ) - \nabla_{\q} h_{\gamma} (\p^{k}_{i} - \p^{k}_{i-1} ) \nonumber
\\
&
\qquad - t \nabla_{\q}^{2} h_{\gamma} (\p^{k}_{i} - \p^{k}_{i-1}) (\qq^{k, \varphi}_{i} - \qq^{k, \varphi}_{i-1} )\big) {\, \cdot \,} \qq \, \di x \bigg) \nonumber
 \\
&
\qquad  - \bigg(  \int_{\Om} \big( \ell(z_{t}) - \ell(z) - t \ell'(z) \varphi \big)  f^{k}_{i} {\, \cdot\,} v \, \di x \bigg)\nonumber
\\
&
= : I_{t, 1} + I_{t, 2} + I_{t, 3} + I_{t, 4}  + I_{t, 5} \, . \nonumber
\end{align}

Let us now rewrite $I_{t, 1}$, $I_{t, 2}$, and~$I_{t, 4}$ from~\eqref{e.36}. For $I_{t, 1}$ we have that
\begin{displaymath}
I_{t, 1} =  \int_{\Om}  \! \C(z) \overline{\eeta}^{k}_{i, t} {\, \cdot\, } \eeta \, \di x +\! \int_{\Om} \! (\C(z_{t}) - \C(z)) ( \strain^{k}_{i, t} - \strain^{k}_{i} ) {\, \cdot \, } \eeta \, \di x
+ \! \int_{\Om} \! \big(\C(z_{t}) - \C(z) - t (\C'(z) \varphi)\big) \strain^{k}_{i} {\, \cdot\,} \eeta \, \di x .
\end{displaymath}
In a similar way, we have that
\begin{displaymath}
I_{t, 2} =  \int_{\Om}  \! \ha(z) \overline{\qq}^{k}_{i, t} {\, \cdot\, } \qq \, \di x + \! \int_{\Om} \! (\ha(z_{t}) - \ha(z)) ( \p^{k}_{i, t} - \p^{k}_{i} ) {\, \cdot \, } \qq \, \di x
+ \! \int_{\Om} \! \big(\ha(z_{t}) - \ha(z) - t (\ha'(z) \varphi)\big) \p^{k}_{i}  {\, \cdot\,} \qq \, \di x .
\end{displaymath}
As for~$I_{t, 4}$, since~$h_{\gamma} \in C^{\infty}( \M^{n}_{D})$, for every $t>0$ there exists $\boldsymbol{\xi}_{t}$ on the segment~$[\p^{k}_{i} - \p^{k}_{i-1}, \p^{k}_{i, t} - \p^{k}_{i-1, t}]$ such that
\begin{displaymath}
\begin{split}
I_{t, 4} & = \int_{\Om} d(z) \big(\nabla^{2}_{\q} h_{\gamma} (\boldsymbol{\xi}_{t}) ( \p^{k}_{i, t} - \p^{k}_{i-1, t} - \p^{k}_{i} + \p^{k}_{i-1} ) - t \nabla_{\q}^{2} h_{\gamma} (\p^{k}_{i} - \p^{k}_{i-1})( \qq^{k, \varphi}_{i} - \qq^{k, \varphi}_{i-1})  \big) {\, \cdot \,} \qq \, \di x
\\
&
= \int_{\Om} d (z)  \nabla_{\q}^{2} h_{\gamma} (\p^{k}_{i} - \p^{k}_{i-1} )  (\overline{\qq}^{k}_{i, t} - \overline{\qq}^{k}_{i-1, t})  {\, \cdot \, } \qq \, \di x 
\\
&
\qquad + \int_{\Om} d( z ) \big( \nabla^{2}_{\q} h_{\gamma} (\boldsymbol\xi_{t}) - \nabla^{2}_{\q} h_{\gamma} (\p^{k}_{i} - \p^{k}_{i-1} )\big) ( \p^{k}_{i, t} - \p^{k}_{i-1, t} - \p^{k}_{i} + \p^{k}_{i-1} ) {\, \cdot\,} \qq \, \di x  \,. 
\end{split}
\end{displaymath}

Inserting the previous equalities in~\eqref{e.36}, choosing the test function $(v, \eeta, \qq) = ( \overline{v}^{k}_{i, t}, \overline{\eeta}^{k}_{i, t}, \overline{\qq}^{k}_{i, t}) \in \A(0)$, using~\eqref{e.C}--\eqref{e.H}, the Lipschitz continuity of~$\C(\cdot)$,~$\ha( \cdot)$,~$d(\cdot)$, and~$\nabla_{\q} h_{\gamma}$, the convexity of~$h_{\gamma}$, and Lemma~\ref{l.3}, we obtain the estimate
\begin{align}  \label{e.37}
\| ( \overline{v}^{k}_{i, t}, & \overline{\eeta}^{k}_{i, t}, \overline{\qq}^{k}_{i, t} ) \|_{H^{1}\times L^{2} \times L^{2}}^{2} \leq  \ C_{\gamma, k} \, t^{2} \| \varphi \|_{\infty}^{2} \| ( \overline{v}^{k}_{i, t}, \overline{\eeta}^{k}_{i, t}, \overline{\qq}^{k}_{i, t} ) \|_{H^{1} \times L^{2} \times L^{2}} 
\\
&
-  \int_{\Om} \big(\C(z_{t}) - \C(z) - t (\C'(z) \varphi)\big) \strain^{k}_{i} {\, \cdot\,} \overline{\eeta}^{k}_{i, t} \, \di x \nonumber
\\
&
-  \int_{\Om} \big(\ha(z_{t}) - \ha(z) - t (\ha'(z) \varphi)\big) \p^{k}_{i} {\, \cdot\,} \overline{\qq}^{k}_{i, t} \, \di x \nonumber
\\
&
- \int_{\Om}  \big( d(z_{t}) - d(z) - t \varphi \, d' (z)\big) \nabla_{\q} h_{\gamma} (\p^{k}_{i} - \p^{k}_{i-1}) {\, \cdot\,} \overline{\qq}^{k}_{i, t} \, \di x \nonumber
\\
&
 + \int_{\Om} d (z)  \nabla_{\q}^{2} h_{\gamma} (\p^{k}_{i} - \p^{k}_{i-1} )  \overline{\qq}^{k}_{i-1, t}  {\, \cdot \, } \overline{\qq}^{k}_{i, t} \, \di x \nonumber
\\
&
 - \int_{\Om} d( z ) \big( \nabla^{2}_{\q} h_{\gamma} (\boldsymbol\xi_{t}) - \nabla^{2}_{\q} h_{\gamma} (\p^{k}_{i} - \p^{k}_{i-1} )\big) ( \p^{k}_{i, t} - \p^{k}_{i-1, t} - \p^{k}_{i} + \p^{k}_{i-1} ) {\, \cdot\,} \overline{\qq}^{k}_{i, t} \, \di x  \nonumber
\\
&
 +   \int_{\Om} \big( \ell(z_{t}) - \ell(z) - t \varphi\, \ell'(z)  \big)  f^{k}_{i} {\, \cdot\,} \overline{v}^{k}_{i, t} \, \di x \,,\nonumber
\end{align}
for some positive constant~$C_{\gamma,k}$ dependent on~$\gamma \in (0,+\infty)$ and~$k \in \mathbb{N}$. In view of the regularity of~$\C( \cdot)$,~$\ha( \cdot)$,~$d(\cdot)$, and~$\ell(\cdot)$, of the bounds~$|\nabla_{\q} h_{\gamma} ( \q) | \leq 1$ and~$| \nabla^{2}_{\q} h_{\gamma} (\q)| \leq 2\gamma$, and of Lemma~\ref{l.3}, we can continue in~\eqref{e.37} with
\begin{align}\label{e.38}
\vphantom{\int} \| ( & \overline{v}^{k}_{i, t},  \overline{\eeta}^{k}_{i, t}, \overline{\qq}^{k}_{i,t} ) \|_{H^{1} \times L^{2} \times L^{2}}^{2} 
\\
&
\vphantom{\int} \leq  \widetilde{C}_{\gamma, k} \, t^{2} \| \varphi\|_{\infty}^{2}\big(  \| ( u^{k}_{i}, \strain^{k}_{i} , \p^{k}_{i} ) \|_{H^{1} \times L^{2} \times L^{2}} + \| f^{k}_{i}\|_{2} +  1 \big) \| (\overline{v}^{k}_{i, t}, \overline{\eeta}^{k}_{i, t}, \overline{\qq}^{k}_{i, t} ) \|_{H^{1} \times L^{2} \times L^{2}} \nonumber 
\\
&
\vphantom{\int} \qquad + \widetilde{C}_{\gamma} \| ( \overline{v}^{k}_{i-1, t}, \overline{\eeta}^{k}_{i-1, t}, \overline{\qq}^{k}_{i-1 ,t} ) \|_{H^{1} \times L^{2} \times L^{2}}\| ( \overline{v}^{k}_{i, t}, \overline{\eeta}^{k}_{i, t}, \overline{\qq}^{k}_{i,t} ) \|_{H^{1} \times L^{2} \times L^{2}} \nonumber
\\
&
\vphantom{\int} \qquad  - \int_{\Om} d( z ) \big( \nabla^{2}_{\q} h_{\gamma} (\boldsymbol\xi_{t}) - \nabla^{2}_{\q} h_{\gamma} (\p^{k}_{i} - \p^{k}_{i-1} )\big) ( \p^{k}_{i, t}  - \p^{k}_{i} ) {\, \cdot\,} \overline{\qq}^{k}_{i, t} \, \di x  \nonumber
\\
&
\vphantom{\int} \qquad +   \int_{\Om} d( z ) \big( \nabla^{2}_{\q} h_{\gamma} (\boldsymbol\xi_{t}) - \nabla^{2}_{\q} h_{\gamma} (\p^{k}_{i} - \p^{k}_{i-1} )\big) (  \p^{k}_{i-1, t} - \p^{k}_{i-1} ) {\, \cdot\,} \overline{\qq}^{k}_{i, t} \, \di x  \nonumber
\\
&
\vphantom{\int} \leq \overline{C}_{\gamma, k } \, t \| \varphi\|_{\infty} \Big( t \| \varphi \|_{\infty} \big( \| ( u^{k}_{i}, \strain^{k}_{i} , \p^{k}_{i} ) \|_{H^{1} \times L^{2} \times L^{2}} + \| f^{k}_{i}\|_{2} +  1 \big)  \nonumber
\\
&
\vphantom{\int} \qquad + \|  \nabla^{2}_{\q} h_{\gamma} (\boldsymbol\xi_{t}) - \nabla^{2}_{\q} h_{\gamma} (\p^{k}_{i} - \p^{k}_{i-1} ) \|_{\nu} \Big) \| ( \overline{v}^{k}_{i, t}, \overline{\eeta}^{k}_{i, t}, \overline{\qq}^{k}_{i,t} ) \|_{H^{1} \times L^{2} \times L^{2}} \nonumber
\\
&
\vphantom{\int} \qquad + \widetilde{C}_{\gamma} \| ( \overline{v}^{k}_{i-1, t}, \overline{\eeta}^{k}_{i-1, t}, \overline{\qq}^{k}_{i-1 ,t} ) \|_{H^{1} \times L^{2} \times L^{2}} \| ( \overline{v}^{k}_{i, t}, \overline{\eeta}^{k}_{i, t}, \overline{\qq}^{k}_{i,t} ) \|_{H^{1} \times L^{2} \times L^{2}} \nonumber
\end{align}
for some positive constants~$\widetilde{C}_{\gamma, k} ,
\overline{C}_{\gamma, k} $ depending on~$\gamma$ and~$k$, for
$\widetilde{C}_{\gamma}$ depending only on~$\gamma$, and for some $\nu
\in (1, +\infty)$. In order to conclude for~\eqref{e.35} we are
left to show that 
\begin{equation}\label{e.39}
\lim_{t \to 0} \, \| \nabla^{2}_{\q} h_{\gamma} (\boldsymbol\xi_{t}) - \nabla^{2}_{\q} h_{\gamma} (\p^{k}_{i} - \p^{k}_{i-1} ) \|_{\nu} = 0 \,.
\end{equation}
%To do this, we explicitly write~$\nabla^{2}_{\q} h_{\gamma}(\q)$ for $\q \in \M^{n}_{D}$:
%\begin{equation}\label{e.hessian}
%\nabla^{2}_{\q} h_{\gamma} (\q) = \frac{1}{\sqrt{|\q|^{2} + \frac{1}{\gamma^{2}}}} \left( \id - \frac{\q \otimes \q}{ |\q|^{2} + \frac{1}{\gamma^{2}}} \right).
%\end{equation}
Arguing as in Proposition~\ref{p.2} we get that $ \p^{k}_{j,t} \to \p^{k}_{j} $ in $L^{2} (\Om; \M^{n}_{D})$ as $t\to0$ for every $j = 1, \ldots, k$, Hence, up to a subsequence we may assume that $\p^{k}_{j, t} \to \p^{k}_{j}$ a.e.~in~$\Om$ for $j=1, \ldots, k$, which implies that~$\boldsymbol\xi_{t} \to \p^{k}_{i} - \p^{k}_{i-1}$ and~$\nabla^{2}_{\q} h_{\gamma} (\boldsymbol\xi_{t}) \to \nabla^{2}_{\q} h_{\gamma}(\p^{k}_{i} - \p^{k}_{i-1} )$ a.e.~in~$\Om$. In view of the bound $| \nabla^{2}_{\q}  h_{\gamma} (\boldsymbol\xi_{t}) | \leq 2\gamma$ in~$\Om$ by the Dominated Convergence Theorem we get~\eqref{e.39}. This, together with~\eqref{e.38}, concludes the proof of~\eqref{e.35}. In particular, estimate~\eqref{e.35} can be made uniform in~$i$ as we have to control a finite number of norms $\| ( \overline{v}^{k}_{i, t}, \overline{\eeta}^{k}_{i, t}, \overline{\qq}^{k}_{i,t} ) \|_{H^{1} \times L^{2} \times L^{2}}$ for $i=1, \ldots, k$.
\end{proof}

%%%%%%%%%%%%%%%%%%%%%%%%%%%%%%%%%%%%%%%%%%%%%%%%%%%%%%%%%%%%%%

\section{Optimality conditions}\label{s.optimality}

 The aim of this section is to  provide first-order optimality conditions for
the TO problem~\eqref{min2}, see Theorem~\ref{t.continuous-optimality}. This will
be obtained by passing to the limit in the corresponding optimality
conditions for the time-discrete TO problem~\eqref{min3}. Since we
believe this to be of independent interest, also in view of a possible
numerical implementation of this TO perspective, we analyse the
time-discrete problem in detail in Subsection
\ref{sub.time-discrete-optimality}.

\subsection{Optimality of the time-discrete problem}
\label{sub.time-discrete-optimality}
 In the following we give the first-order optimality conditions for the time-discrete problem~\eqref{min3} by passing to the limit as~$\gamma \to +\infty$ in~\eqref{e.8}--\eqref{e.9}. We start by proving a uniform bound for the adjoint variables~$(\overline{u}^{k}_{i, \gamma}, \overline{\strain}^{k}_{i, \gamma}, \overline{\p}^{k}_{i, \gamma}) _{i=0}^{k+1} \in \big( H^{1}(\Om; \R^{n}) \times L^{2}(\Om; \M^{n}_{S}) \times L^{2}(\Om; \M^{n}_{D}) \big)^{k+1}$ satisfying~\eqref{e.8}--\eqref{e.9}. From now on, we will use the notation
\begin{align}\label{e.norm-notation}
& \| \strain \|_{\C(z)}^{2} := \int_{\Om} \C(z) \strain{\, \cdot\,} \strain \, \di x  \qquad \| \p \|_{\ha(z)}^{2} := \int_{\Om} \ha(z) \p {\, \cdot\,} \p \, \di x 
\end{align}
for every $z \in L^{\infty}(\Om)$, every $\strain \in L^{2}(\Om; \M^{n}_{S})$, and every $\p \in L^{2}(\Om; \M^{n}_{D})$. In view of~\eqref{e.C}--\eqref{e.H},~$\| \cdot\|_{\C(z)}$ and~$\| \cdot\|_{\ha(z)}$ are two norms in $L^{2}(\Om; \M^{n}_{S})$ and $L^{2}(\Om; \M^{n}_{D})$, respectively, and are both equivalent to the usual $L^{2}$-norm, uniformly w.r.t.~$z \in L^{\infty}(\Om) $.

We now state the main result of this section.

\begin{theorem}[Optimality for the time-discrete TO problem]\label{t.discrete-optimality}
Let $k \in \mathbb{N}$ and $f^{k}_{i}, g^{k}_{i}, w^{k}_{i}$ be
defined as in~\eqref{e.fgw}. For~$\gamma \in (0,+\infty)$, let $z_{k,
  \gamma} \in H^{1}(\Om; [0,1])$ be a solution of the approximate
time-discrete \emph{TO} problem~\eqref{min4}. Assume that~$z_{k, \gamma} \rightharpoonup z_{k}$ weakly in~$H^{1}(\Om)$ as $\gamma \to +\infty$. Then, $z_{k} \in H^{1}(\Om; [0,1])$  solves~\eqref{min3}  and, denoted with~$(u^{k}_{i}, \strain^{k}_{i}, \p^{k}_{i})_{i=0}^{k}$ the corresponding time-discrete quasistatic evolution, there exist $(\brho^{k}_{i})_{i=0}^{k}, (\bpi^{k}_{i})_{i=1}^{k+1} \in \big( L^{2}(\Om; \M^{n}_{D}) \big)^{k+1}$, and $( \overline{u}^{k}_{i}, \overline{\strain}^{k}_{i}, \overline{\p}^{k}_{i})_{i=1}^{k+1} \in \big( H^{1}(\Om; \R^{n}) \times L^{2}(\Om; \M^{n}_{S}) \times L^{2}(\Om; \M^{n}_{D}) \big)^{k+1}$ such that for every every $i=0, \ldots, k$, every $(v, \eeta, \qq) \in \A(0)$, and $\varphi \in H^{1}(\Om) \cap L^{\infty}(\Om)$:
\begin{align}
& \int_{\Om} \C(z_{k}) \strain^{k}_{i}{\, \cdot\,} \eeta\, \di x + \int_{\Om} \ha(z_{k}) \p^{k}_{i}{\, \cdot\,} \qq \, \di x  \label{e.discrete-equilibrium1}
\\
&
\qquad  \qquad + \int_{\Om} \brho^{k}_{i}{\, \cdot\,} \qq \, \di x - \int_{\Om} \ell(z_{k}) f^{k}_{i} {\, \cdot\,} v \, \di x - \int_{\Gamma_{N}} g^{k}_{i} {\, \cdot\, } v \, \di \HH^{n-1} = 0\,, \nonumber
\\[2mm]
& 
\brho^{k}_{i} {\, \cdot\,} ( \p^{k}_{i} - \p^{k}_{i-1} ) = d(z) | \p^{k}_{i} - \p^{k}_{i-1} |\quad \text{in $\Om$}\,,\qquad  \p^{k}_{i} - \p^{k}_{i-1} = 0 \quad \text{in $\{ |\brho^{k}_{i}| < d(z) \}$}\,, \label{e.discrete-equilibrium2}
\\[2mm]
&
\int_{\Om} \C(z_{k}) \overline{\strain}^{k}_{i}{\, \cdot\,} \eeta \, \di x + \int_{\Om} \ha(z_{k}) \overline{\p}^{k}_{i} {\, \cdot\,} \qq \, \di x \label{e.discrete-optimality1} 
\\
&
\qquad \qquad + \int_{\Om} \bpi^{k}_{i}{\, \cdot\,} \qq\, \di x  - \int_{\Om} \ell(z_{k}) f^{k}_{i} {\, \cdot\,} v \, \di x - \int_{\Gamma_{N}} \!\! g^{k}_{i} {\, \cdot\,} v \, \di \HH^{n-1} = 0\,, \nonumber 
\\[2mm]
& 
 \sum_{j=1}^{k} \bigg( \int_{\Om} \varphi \, \ell'(z_{k}) ( f^{k}_{j} - f^{k}_{j-1}) {\, \cdot\,} (u^{k}_{j} + \overline{u}^{k}_{j} )\, \di x 
 - \int_{\Om} \big( \C'(z_{k}) \varphi \big) ( \strain^{k}_{j} - \strain^{k}_{j-1}) {\, \cdot\,} \overline{\strain}^{k}_{j} \, \di x  \label{e.discrete-optimality2}
 \\
 &
 \qquad \qquad  - \int_{\Om} \big( \ha'(z_{k}) \varphi \big) ( \p^{k}_{j} - \p^{k}_{j-1}) {\, \cdot\,} \overline{\p}^{k}_{j} \, \di x - \int_{\Om} \varphi \, \frac{d'(z_{k})}{d'(z_{k})} \, ( \brho^{k}_{j} - \brho^{k}_{j-1} ) {\, \cdot\,} \overline{\p}^{k}_{j} \, \di x \bigg) \,, \nonumber
\\
&
\qquad \qquad  + \int_{\Om} \delta \nabla{z_{k}}{\, \cdot\,} \nabla{\varphi} + \frac{\varphi}{\delta} ( z_{k} (1 - z_{k})^{2} - z_{k}^{2} ( 1 - z_{k} )) \, \di x = 0 \,. \nonumber
\\[2mm]
&
\hphantom{---------} \vphantom{\int} \bpi^{k}_{i} {\,\cdot \,} (\p^{k}_{i} - \p^{k}_{i-1}) = 0 \qquad \text{in $\Om$}\,, \label{e.discrete-optimality3}
\\[2mm]
&
\hphantom{---------} \vphantom{\int} \overline{\p}^{k}_{i} - \overline{\p}^{k}_{i+1} = 0 \qquad \text{in $\{ | \brho^{k}_{i} | < d(z_{k}) \}$}
\label{e.discrete-optimality4}
\end{align}
\end{theorem}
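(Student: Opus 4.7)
The first statement of the theorem is immediate from Corollary~\ref{c.2}; that same result together with Proposition~\ref{p.5} also gives the strong convergence of the associated approximate forward evolutions $(u^{k,\gamma}_i, \strain^{k,\gamma}_i, \p^{k,\gamma}_i)$ to $(u^k_i, \strain^k_i, \p^k_i)$ in $H^1(\Om;\R^n)\times L^2(\Om;\M^n_S)\times L^2(\Om;\M^n_D)$ for each $i=0,\dots,k$. All remaining conclusions will be obtained by passing to the limit $\gamma\to+\infty$ in the Euler--Lagrange equation for the forward problem~\eqref{e.5} and in the optimality system \eqref{e.8}--\eqref{e.9} provided by Corollary~\ref{c.approx-optimality}; I denote the associated regularized adjoint states by $(\overline u^{k,\gamma}_i,\overline\strain^{k,\gamma}_i,\overline\p^{k,\gamma}_i)$.

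The natural candidates for the limiting multipliers are
\[
\brho^{k,\gamma}_i := d(z_{k,\gamma})\,\nabla_\q h_\gamma\bigl(\p^{k,\gamma}_i-\p^{k,\gamma}_{i-1}\bigr),\qquad \bpi^{k,\gamma}_i := d(z_{k,\gamma})\,\nabla^2_\q h_\gamma\bigl(\p^{k,\gamma}_i-\p^{k,\gamma}_{i-1}\bigr)\bigl(\overline\p^{k,\gamma}_i-\overline\p^{k,\gamma}_{i+1}\bigr).
\]
The pointwise bound $|\nabla_\q h_\gamma|\leq 1$ immediately gives $\|\brho^{k,\gamma}_i\|_\infty\leq\beta$, so $\brho^{k,\gamma}_i\rightharpoonup^{*}\brho^k_i$ in $L^\infty(\Om;\M^n_D)$ along a subsequence. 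To bound the adjoint quantities I would run a backward induction starting from $(\overline u^{k,\gamma}_{k+1},\overline\strain^{k,\gamma}_{k+1},\overline\p^{k,\gamma}_{k+1})=0$: testing \eqref{e.8} with the solution itself, estimating the cross term by Cauchy--Schwarz in the semi-inner product induced by $\nabla^2_\q h_\gamma\geq 0$ together with Young, and using Korn--Poincar\'e through the admissibility constraint $\e\overline u^{k,\gamma}_i=\overline\strain^{k,\gamma}_i+\overline\p^{k,\gamma}_i$, one obtains uniform $H^1\times L^2\times L^2$-estimates on $(\overline u^{k,\gamma}_i,\overline\strain^{k,\gamma}_i,\overline\p^{k,\gamma}_i)$. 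A uniform $L^2$-bound on $\bpi^{k,\gamma}_i$ then follows from the pointwise identity
\[
\bpi^{k,\gamma}_i = \Pi_{\M^n_D}\!\bigl(\C(z_{k,\gamma})\overline\strain^{k,\gamma}_i\bigr) - \ha(z_{k,\gamma})\overline\p^{k,\gamma}_i,
\]
obtained by testing \eqref{e.8} with $(0,-\qq,\qq)\in\A(0)$. Extracting further weakly convergent subsequences and using the strong convergence of the forward states together with $z_{k,\gamma}\to z_k$ in $L^q(\Om)$ for every $q<+\infty$, passage to the limit in \eqref{e.8}--\eqref{e.9} yields \eqref{e.discrete-equilibrium1}, \eqref{e.discrete-optimality1}, and \eqref{e.discrete-optimality2}.

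The subtle step is the identification of the complementarity relations. For \eqref{e.discrete-equilibrium2} I would exploit the explicit expression
\[
\brho^{k,\gamma}_i = d(z_{k,\gamma})\,\frac{\p^{k,\gamma}_i-\p^{k,\gamma}_{i-1}}{\sqrt{|\p^{k,\gamma}_i-\p^{k,\gamma}_{i-1}|^2+1/\gamma^2}},
\]
which gives $|\brho^{k,\gamma}_i|\leq d(z_{k,\gamma})$ pointwise and $\brho^{k,\gamma}_i\cdot(\p^{k,\gamma}_i-\p^{k,\gamma}_{i-1})=d(z_{k,\gamma})|\p^{k,\gamma}_i-\p^{k,\gamma}_{i-1}|^2/\sqrt{|\p^{k,\gamma}_i-\p^{k,\gamma}_{i-1}|^2+1/\gamma^2}$; combined with the weak-$*$ convergence of $\brho^{k,\gamma}_i$ and the strong $L^2$-convergence of $\p^{k,\gamma}_i-\p^{k,\gamma}_{i-1}$ and $z_{k,\gamma}$, this yields both $|\brho^k_i|\leq d(z_k)$ and $\brho^k_i\cdot(\p^k_i-\p^k_{i-1})=d(z_k)|\p^k_i-\p^k_{i-1}|$ almost everywhere, from which \eqref{e.discrete-equilibrium2} follows. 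For \eqref{e.discrete-optimality3} one uses the identity
\[
\bpi^{k,\gamma}_i\cdot(\p^{k,\gamma}_i-\p^{k,\gamma}_{i-1}) = d(z_{k,\gamma})\,\frac{(\overline\p^{k,\gamma}_i-\overline\p^{k,\gamma}_{i+1})\cdot(\p^{k,\gamma}_i-\p^{k,\gamma}_{i-1})}{\gamma^2\bigl(|\p^{k,\gamma}_i-\p^{k,\gamma}_{i-1}|^2+1/\gamma^2\bigr)^{3/2}},
\]
whose RHS, after the rescaling $s\mapsto\gamma s$, is a bounded function of $\gamma|\p^{k,\gamma}_i-\p^{k,\gamma}_{i-1}|$ that vanishes pointwise on $\{\p^k_i\neq\p^k_{i-1}\}$; combined with weak-strong pairing against arbitrary $L^\infty$-functions and dominated convergence this gives $\bpi^k_i\cdot(\p^k_i-\p^k_{i-1})=0$. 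For \eqref{e.discrete-optimality4} one notes that on $\{|\brho^k_i|<d(z_k)\}$ the formula for $\brho^{k,\gamma}_i$ forces $\gamma|\p^{k,\gamma}_i-\p^{k,\gamma}_{i-1}|$ to stay uniformly bounded, so that every eigenvalue of $\nabla^2_\q h_\gamma$ is of order $\gamma$ there; the $L^2$-bound on $\bpi^{k,\gamma}_i$ then forces $\overline\p^{k,\gamma}_i-\overline\p^{k,\gamma}_{i+1}=O(1/\gamma)$ in $L^2$ of this elastic set, whose weak limit vanishes.

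The chief technical obstacle is the two-scale degenerate behaviour of $\nabla^2_\q h_\gamma$ as $\gamma\to+\infty$, which concentrates like $\gamma\,\id$ near the origin while decaying like $1/|\q|$ far from it; this degeneracy couples with the backward-in-time structure of the adjoint system and demands a careful split of the analysis on the plastic set $\{\p^k_i\neq\p^k_{i-1}\}$ and the elastic set $\{|\brho^k_i|<d(z_k)\}$ in order to extract the correct complementarity \eqref{e.discrete-equilibrium2} and \eqref{e.discrete-optimality3}--\eqref{e.discrete-optimality4}.
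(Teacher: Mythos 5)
Your proposal follows the same overall route as the paper: identifying $\brho^{k,\gamma}_i = d(z_{k,\gamma})\nabla_\q h_\gamma(\p^{k,\gamma}_i-\p^{k,\gamma}_{i-1})$ and $\bpi^{k,\gamma}_i=d(z_{k,\gamma})\nabla^2_\q h_\gamma(\p^{k,\gamma}_i-\p^{k,\gamma}_{i-1})(\overline\p^{k,\gamma}_i-\overline\p^{k,\gamma}_{i+1})$ as the regularized multipliers, deriving a $\gamma$-uniform bound on the adjoint triples, passing to the limit in the Euler--Lagrange and adjoint systems, and recovering the complementarity relations from the pointwise structure of $\nabla_\q h_\gamma$ and $\nabla^2_\q h_\gamma$. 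The complementarity arguments (for \eqref{e.discrete-equilibrium2} and, more delicately, \eqref{e.discrete-optimality3}--\eqref{e.discrete-optimality4}) match what the paper defers to its earlier work; your heuristics are sound, with the one caveat that the bound on $\gamma\,|\p^{k,\gamma}_i-\p^{k,\gamma}_{i-1}|$ on the elastic set is only pointwise a.e., not uniform in $x$, so some care (Egoroff or a Fatou-type passage) is needed to turn ``$\overline\p^{k,\gamma}_i-\overline\p^{k,\gamma}_{i+1}=O(1/\gamma)$ in $L^2$ of this elastic set'' into a rigorous statement.

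There is, however, one genuine gap: the proposed $\gamma$-uniform estimate on the adjoint variables. You test \eqref{e.8} with the solution $(\overline u^{k,\gamma}_i,\overline\strain^{k,\gamma}_i,\overline\p^{k,\gamma}_i)$ itself and estimate the ensuing cross term via Cauchy--Schwarz/Young in the semi-inner product of $\nabla^2_\q h_\gamma$. That cross term is
$\int_\Om d(z_{k,\gamma})\nabla^2_\q h_\gamma(\p^{k,\gamma}_i-\p^{k,\gamma}_{i-1})\,\overline\p^{k,\gamma}_{i+1}\cdot\overline\p^{k,\gamma}_i\,\di x$, and after Cauchy--Schwarz and Young you are left to control $\int_\Om d(z_{k,\gamma})\nabla^2_\q h_\gamma(\p^{k,\gamma}_i-\p^{k,\gamma}_{i-1})\,\overline\p^{k,\gamma}_{i+1}\cdot\overline\p^{k,\gamma}_{i+1}\,\di x$. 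Two problems arise: (a) the kernel $\nabla^2_\q h_\gamma(\p^{k,\gamma}_i-\p^{k,\gamma}_{i-1})$ depends on $i$, so there is no telescoping with the analogous term at step $i+1$; and (b) the eigenvalues of $\nabla^2_\q h_\gamma$ are of order $\gamma$ near the origin, so this term is \emph{not} controlled by $\|\overline\p^{k,\gamma}_{i+1}\|_{2}^2$ uniformly in $\gamma$. The backward induction therefore does not close. The paper (Proposition~\ref{p.bound-adjoint}) instead tests \eqref{e.8} with the increment $(\overline u^{k,\gamma}_i-\overline u^{k,\gamma}_{i+1},\overline\strain^{k,\gamma}_i-\overline\strain^{k,\gamma}_{i+1},\overline\p^{k,\gamma}_i-\overline\p^{k,\gamma}_{i+1})\in\A(0)$. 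Then the $\nabla^2_\q h_\gamma$ contribution is exactly the nonnegative quadratic form $\int_\Om d(z_{k,\gamma})\nabla^2_\q h_\gamma(\cdot)(\overline\p^{k,\gamma}_i-\overline\p^{k,\gamma}_{i+1})\cdot(\overline\p^{k,\gamma}_i-\overline\p^{k,\gamma}_{i+1})\,\di x\ge 0$ (convexity of $h_\gamma$), which is simply dropped; the remaining terms telescope in the $\C$- and $\ha$-norms, giving a $\gamma$-independent bound. You should adopt that test function. A secondary remark: you record only weak-$*$ $L^\infty$ convergence of $\brho^{k,\gamma}_i$, but to pass to the limit in the term $(\brho^{k,\gamma}_j-\brho^{k,\gamma}_{j-1})\cdot\overline\p^{k,\gamma}_j$ of \eqref{e.9} you need strong convergence, since $\overline\p^{k,\gamma}_j$ only converges weakly; this strong convergence is available from the identity $\brho^{k,\gamma}_i=\Pi_{\M^n_D}(\C(z_{k,\gamma})\strain^{k,\gamma}_i)-\ha(z_{k,\gamma})\p^{k,\gamma}_i$ and the strong convergence of the forward states, and should be made explicit.
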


In order to prove Theorem~\ref{t.discrete-optimality}, we need to establish some uniform bounds for the adjoint system~\eqref{e.9} of Corollary~\ref{c.approx-optimality}. This is the content of the following proposition.

\begin{proposition}[Uniform bounds]\label{p.bound-adjoint}
For every $\gamma \in (0,+\infty)$ and~$k \in \mathbb{N}$,
let~$z_{k}^{\gamma} \in H^{1}(\Om;[0,1])$ be a solution of the
approximate time-discrete \emph{TO} problem~\eqref{min4} with corresponding approximate time-discrete quasistatic evolution~$(u^{k, \gamma}_{i}, \strain^{k, \gamma}_{i}, \p^{k, \gamma}_{i}) _{i=0}^{k+1} \in \big( H^{1}(\Om; \R^{n}) \times L^{2}(\Om; \M^{n}_{S}) \times L^{2}(\Om; \M^{n}_{D}) \big)^{k+1}$. Furthermore, let $(\overline{u}^{k, \gamma}_{i}, \overline{\strain}^{k, \gamma}_{i}, \overline{\p}^{k, \gamma}_{i}) _{i=1}^{k+1} \in \big( H^{1}(\Om; \R^{n}) \times L^{2}(\Om; \M^{n}_{S}) \times L^{2}(\Om; \M^{n}_{D}) \big)^{k+1}$ be the adjoint variables introduced in Corollary~\ref{c.approx-optimality}. Then, $(\overline{u}^{k, \gamma}_{i}, \overline{\strain}^{k, \gamma}_{i}, \overline{\p}^{k, \gamma}_{i})$ is bounded in~$H^{1}(\Om; \R^{n}) \times L^{2}(\Om; \M^{n}_{S}) \times L^{2}(\Om; \M^{n}_{D})$ uniformly w.r.t.~$i$,~$k$, and~$\gamma$.
\end{proposition}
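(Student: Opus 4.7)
The strategy is to test the adjoint equation \eqref{e.8} at level $i$ with the backward increment $(\overline{u}^{k,\gamma}_i - \overline{u}^{k,\gamma}_{i+1}, \overline{\strain}^{k,\gamma}_i - \overline{\strain}^{k,\gamma}_{i+1}, \overline{\p}^{k,\gamma}_i - \overline{\p}^{k,\gamma}_{i+1}) \in \A(0)$, which is admissible as the difference of two elements of $\A(0)$. Applying the symmetric-bilinear identity $a \cdot (a-b) = \tfrac{1}{2}(|a|^2 - |b|^2 + |a-b|^2)$ to the $\C(z_{k,\gamma})$- and $\ha(z_{k,\gamma})$-weighted inner products and discarding the nonnegative Hessian contribution $\int_{\Om} d(z_{k,\gamma}) \nabla^{2}_{\q} h_{\gamma}(\p^{k,\gamma}_i - \p^{k,\gamma}_{i-1}) (\overline{\p}^{k,\gamma}_i - \overline{\p}^{k,\gamma}_{i+1}) \cdot (\overline{\p}^{k,\gamma}_i - \overline{\p}^{k,\gamma}_{i+1}) \, \di x \geq 0$ (which is nonnegative by the convexity of $h_\gamma$), I obtain the one-step inequality
\[
\tfrac{1}{2}\bigl(\|\overline{\strain}^{k,\gamma}_i\|^2_{\C(z_{k,\gamma})} + \|\overline{\p}^{k,\gamma}_i\|^2_{\ha(z_{k,\gamma})}\bigr) - \tfrac{1}{2}\bigl(\|\overline{\strain}^{k,\gamma}_{i+1}\|^2_{\C(z_{k,\gamma})} + \|\overline{\p}^{k,\gamma}_{i+1}\|^2_{\ha(z_{k,\gamma})}\bigr) \leq L_i(\overline{u}^{k,\gamma}_i - \overline{u}^{k,\gamma}_{i+1}),
\]
where $L_i(v) \coloneq \int_{\Om}\ell(z_{k,\gamma}) f^k_i \cdot v \, \di x + \int_{\Gamma_N} g^k_i \cdot v \, \di \HH^{n-1}$.

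Telescoping the one-step inequality from an arbitrary $i_0 \in \{1, \dots, k\}$ up to $i=k$ and invoking the terminal condition $(\overline{u}^{k,\gamma}_{k+1}, \overline{\strain}^{k,\gamma}_{k+1}, \overline{\p}^{k,\gamma}_{k+1}) = (0,0,0)$, I introduce $e_j \coloneq \bigl(\|\overline{\strain}^{k,\gamma}_j\|^2_{\C(z_{k,\gamma})} + \|\overline{\p}^{k,\gamma}_j\|^2_{\ha(z_{k,\gamma})}\bigr)^{1/2}$ and obtain $e_{i_0}^2 \leq 2 \sum_{i=i_0}^k L_i(\overline{u}^{k,\gamma}_i - \overline{u}^{k,\gamma}_{i+1})$, which an Abel summation recasts as $2 L_{i_0}(\overline{u}^{k,\gamma}_{i_0}) + 2 \sum_{i=i_0+1}^k (L_i - L_{i-1})(\overline{u}^{k,\gamma}_i)$. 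Since $\overline{u}^{k,\gamma}_i = 0$ on $\Gamma_D$ (with $\HH^{n-1}(\Gamma_D) > 0$) and $\e \overline{u}^{k,\gamma}_i = \overline{\strain}^{k,\gamma}_i + \overline{\p}^{k,\gamma}_i$, Korn's inequality yields $\|\overline{u}^{k,\gamma}_i\|_{H^1} \leq C_K\, e_i$ with $C_K$ independent of $i, k, \gamma$. The data regularity~\eqref{eq:data}, together with the embedding $H^1(0,T;L^p) \hookrightarrow C([0,T];L^2)$, provides $\sup_i (\|f^k_i\|_2 + \|g^k_i\|_{L^2(\Gamma_N)}) \leq C$, and the fundamental-theorem-of-calculus bound $\sum_{i=1}^k (\|f^k_i - f^k_{i-1}\|_2 + \|g^k_i - g^k_{i-1}\|_{L^2(\Gamma_N)}) \leq \int_0^T (\|\dot{f}\|_2 + \|\dot{g}\|_{L^2(\Gamma_N)}) \, \di s$ yields $k$-independent summability of the increments. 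Collecting, I arrive at
\[
e_{i_0}^2 \leq C_1 e_{i_0} + C_2 M, \qquad M \coloneq \max_{1 \leq j \leq k+1} e_j,
\]
with $C_1, C_2$ depending only on the data. Taking the supremum over $i_0$ yields $M^2 \leq (C_1 + C_2) M$, hence $M \leq C_1 + C_2$; this is the desired uniform $L^2$-bound for $(\overline{\strain}^{k,\gamma}_i, \overline{\p}^{k,\gamma}_i)$, and the $H^1$-bound for $\overline{u}^{k,\gamma}_i$ follows once more from Korn.

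The main obstacle, and the reason for the specific test function, is the unbounded-in-$\gamma$ factor $\|\nabla^{2}_{\q} h_\gamma\|_\infty \sim \gamma$. A direct test with $(\overline{u}^{k,\gamma}_i, \overline{\strain}^{k,\gamma}_i, \overline{\p}^{k,\gamma}_i)$ would produce the cross term $\int d(z_{k,\gamma}) \nabla^{2}_{\q} h_\gamma(\p^{k,\gamma}_i - \p^{k,\gamma}_{i-1}) \overline{\p}^{k,\gamma}_{i+1} \cdot \overline{\p}^{k,\gamma}_i \, \di x$, for which Young-type estimates pay a factor $\gamma$ and destroy the uniformity. Testing instead with the backward increment makes the Hessian appear only through the diagonal quadratic form $\nabla^{2}_{\q} h_\gamma(\cdot)(\overline{\p}^{k,\gamma}_i - \overline{\p}^{k,\gamma}_{i+1}) \cdot (\overline{\p}^{k,\gamma}_i - \overline{\p}^{k,\gamma}_{i+1})$, whose mere positivity is enough and whose $\gamma$-dependent magnitude plays no role.
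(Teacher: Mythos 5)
Your proof is correct and follows essentially the same route as the paper's: test the adjoint equation \eqref{e.8} with the backward increment, use the polarization identity for the $\C$- and $\ha$-weighted quadratic forms, discard the nonnegative Hessian term by convexity of $h_\gamma$ (precisely to avoid the $\gamma$-dependent factor, which you correctly flag as the key obstruction), telescope and Abel-sum using the terminal condition $\overline{u}^{k,\gamma}_{k+1}=0$, and close via Korn's inequality and the $H^1$-in-time regularity of $f,g$.
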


\begin{proof}
We test the equation~\eqref{e.8} for $i=k, \ldots, 1$ with the triple $(\overline{u}^{k, \gamma}_{i} - \overline{u}^{k, \gamma}_{i+1} , \overline{\strain}^{k, \gamma}_{i} - \overline{\strain}^{k, \gamma}_{i+1} , \overline{\p}^{k, \gamma}_{i} - \overline{\p}^{k, \gamma}_{i+1}) \in \A(0)$. Since the function~$h_{\gamma}$ is convex, we have that
\begin{align}\label{e.40}
\int_{\Om} & \C(z_{k}^{\gamma}) \overline{\strain}^{k, \gamma}_{i} {\, \cdot\,} ( \overline{\strain}^{k, \gamma}_{i} - \overline{\strain}^{k, \gamma}_{i+1} ) \, \di x + \int_{\Om} \ha(z^{\gamma}_{k}) \overline{\p}^{k, \gamma}_{i} {\, \cdot\,} ( \overline{\p}^{k, \gamma}_{i} - \overline{\p}^{k, \gamma}_{i+1}) \, \di x 
\\
&
 - \int_{\Om} \ell(z_{k}^{\gamma}) f^{k}_{i}{\, \cdot\,} (\overline{u}^{k, \gamma}_{i} - \overline{u}^{k, \gamma}_{i+1} ) \, \di x - \int_{\Gamma_{N}} g^{k}_{i} {\, \cdot\,} (\overline{u}^{k, \gamma}_{i} - \overline{u}^{k, \gamma}_{i+1} ) \, \di \HH^{n-1} \leq 0\,. \nonumber
\end{align}
We rewrite the first term in~\eqref{e.40} as
\begin{align*}
\int_{\Om} & \C(z_{k}^{\gamma}) \overline{\strain}^{k, \gamma}_{i} {\, \cdot\,} ( \overline{\strain}^{k, \gamma}_{i} - \overline{\strain}^{k, \gamma}_{i+1} ) \, \di x = \frac12 \| \overline{\strain}^{k, \gamma}_{i} \|_{\C(z_{k}^{\gamma})}^{2} - \frac12 \| \overline{\strain}^{k, \gamma}_{i+1} \|_{\C(z_{k}^{\gamma})}^{2} + \frac12 \| \overline{\strain}^{k, \gamma}_{i} - \overline{\strain}^{k, \gamma}_{i+1} \|_{\C(z_{k}^{\gamma})}^{2}\,.
\end{align*}
In a similar way we can rewrite the second term in~\eqref{e.40}, obtaining
\begin{align}\label{e.41}
& \frac12 \| \overline{\strain}^{k, \gamma}_{i} \|_{\C(z_{k}^{\gamma})}^{2} - \frac12 \| \overline{\strain}^{k, \gamma}_{i+1} \|_{\C(z_{k}^{\gamma})}^{2} + \frac12 \| \overline{\strain}^{k, \gamma}_{i} - \overline{\strain}^{k, \gamma}_{i+1} \|_{\C(z_{k}^{\gamma})}^{2} 
\\
&
\qquad + \frac12 \| \overline{\p}^{k, \gamma}_{i} \|_{\ha(z_{k}^{\gamma}), 2}^{2} - \frac12 \| \overline{\p}^{k, \gamma}_{i+1} \|_{\ha(z_{k}^{\gamma}), 2}^{2} + \frac12 \| \overline{\p}^{k, \gamma}_{i} - \overline{\p}^{k, \gamma}_{i+1} \|_{\ha(z_{k}^{\gamma}), 2}^{2} \nonumber
\\
&
\qquad  - \int_{\Om} \ell(z_{k}^{\gamma}) f^{k}_{i}{\, \cdot\,} (\overline{u}^{k, \gamma}_{i} - \overline{u}^{k, \gamma}_{i+1} ) \, \di x - \int_{\Gamma_{N}} g^{k}_{i} {\, \cdot\,} (\overline{u}^{k, \gamma}_{i} - \overline{u}^{k, \gamma}_{i+1} ) \, \di \HH^{n-1} \leq 0\,. \nonumber
\end{align}
For every $j \in \{1, \ldots, k \}$ we sum up~\eqref{e.41} over $i=k, \ldots, j$ and use that $\overline{u}^{k}_{k+1, \gamma} = 0$, so that
\begin{align}
 \frac12  \| \overline{\strain}^{k, \gamma}_{j} \|_{\C(z_{k}^{\gamma})}^{2} &  + \frac12 \| \overline{\p}^{k, \gamma}_{j} \|_{\ha(z_{k}^{\gamma})}^{2} 
\\
&
\leq \sum_{i=k}^{j+1} \bigg( \int_{\Om} \ell(z_{k}^{\gamma}) (f^{k}_{i} - f^{k}_{i-1}) {\, \cdot\,} \overline{u}^{k, \gamma}_{i} \, \di x + \int_{\Gamma_{N}} ( g^{k}_{i} - g^{k}_{i-1} ) {\, \cdot\,} \overline{u}^{k, \gamma}_{i} \, \di \HH^{n-1} \bigg) \nonumber
\\
&
\qquad + \int_{\Om} \ell(z_{k}^{\gamma}) f^{k}_{j} {\, \cdot\,} \overline{u}^{k, \gamma}_{j} \, \di x + \int_{\Gamma_{N}} g^{k}_{j}{\, \cdot\,} \overline{u}^{k, \gamma}_{j} \, \di \HH^{n-1} \,. \nonumber
\end{align}
By Cauchy inequality and by the regularity of~$f$ and~$g$ we deduce that
\begin{align*}
& \sup_{i = 1, \ldots, k}   \big( \| \overline{\strain}^{k, \gamma}_{i} \|_{\C(z_{k}^{\gamma})} + \| \overline{\p}^{k, \gamma}_{i} \|_{\ha(z_{k}^{\gamma})} \big)^{2}
\\
&
\leq C \sup_{i= 1, \ldots, k}  \big( \| \overline{\strain}^{k, \gamma}_{i} \|_{\C(z_{k}^{\gamma})} + \| \overline{\p}^{k, \gamma}_{i} \|_{\ha(z_{k}^{\gamma})} \big) \bigg(  \int_{0}^{T} (\| \dot{f}_{k}(t) \|_{2} + \| \dot{g}_{k}(t) \|_{2} ) \, \di t + \sup_{i = 1, \ldots, k} ( \| f^{k}_{i} \|_{2} + \| g^{k}_{i}\|_{2}) \bigg)
\\
&
\leq C \sup_{i= 1, \ldots, k}  \big( \| \overline{\strain}^{k, \gamma}_{i} \|_{\C(z_{k}^{\gamma})} + \| \overline{\p}^{k, \gamma}_{i} \|_{\ha(z_{k}^{\gamma})} \big) \big( \| f \|_{H^{1}(0,T; H^{1}(\Om; \R^{n}))}  +  \| g \|_{H^{1}(0,T; L^{2}(\Gamma_{N}; \R^{n}))} \big)\,,
\end{align*}
for some positive constant~$C$ independent of~$i$, $k$, and~$\gamma$. The above inequality implies the boundedness of~$(\overline{u}^{k, \gamma}_{i}, \overline{\strain}^{k, \gamma}_{i}, \overline{\p}^{k, \gamma}_{i})$ in~$H^{1}(\Om; \R^{n}) \times L^{2}(\Om; \M^{n}_{S}) \times L^{2}(\Om; \M^{n}_{D})$ uniformly w.r.t.~$i$,~$k$, and~$\gamma$.
\end{proof}

We now prove Theorem~\ref{t.discrete-optimality}.

\begin{proof}[Proof of Theorem~\ref{t.discrete-optimality}]
Let~$z_{k}^{\gamma}, z_{k} \in H^{1}(\Om;[0,1])$ be as in the statement of the Theorem, and let~$(u^{k, \gamma}_{i}, \strain^{k, \gamma}_{i}, \p^{k, \gamma}_{i})_{i=0}^{k}, (u^{k}_{i}, \strain^{k}_{i}, \p^{k}_{i})_{i=0}^{k} \in \big( H^{1}(\Om; \R^{n}) \times L^{2}(\Om; \M^{n}_{S}) \times L^{2}(\Om; \M^{n}_{D}) \big) ^{k+1}$ be the corresponding approximate time-discrete and time-discrete evolutions, respectively. By Proposition~\ref{p.5}, we know that $(u^{k, \gamma}_{i}, \strain^{k, \gamma}_{i}, \p^{k, \gamma}_{i})_{i=0}^{k}$ converges to $(u^{k}_{i}, \strain^{k}_{i}, \p^{k}_{i})_{i=0}^{k}$ in $\big( H^{1}(\Om; \R^{n}) \times L^{2}(\Om; \M^{n}_{S}) \times L^{2}(\Om; \M^{n}_{D}) \big) ^{k+1}$ as $\gamma \to + \infty$. 

Equations~\eqref{e.discrete-equilibrium1}--\eqref{e.discrete-equilibrium2} are equivalent to the equilibrium condition~\eqref{e.3} of Definition~\ref{d.discrete-evolution}. In particular, we have that
\begin{displaymath}
\brho^{k}_{i} = \Pi_{\M^{n}_{D}} \big( \C(z_{k}) \strain^{k}_{i} \big)  - \ha(z_{k}) \p^{k}_{i} \qquad \text{in $L^{2}(\Om; \M^{n}_{D})$}\,.  
\end{displaymath}
Furthermore, setting $\brho^{k, \gamma}_{i} := d(z_{k}^{\gamma}) \nabla_{\q} h_{\gamma} (\p^{k, \gamma}_{i} - \p^{k, \gamma}_{i-1})$ we have that $\brho^{k, \gamma}_{i} = \Pi_{\M^{n}_{D}} \big( \C(z_{k}^{\gamma}) \strain^{k, \gamma}_{i} \big)  - \ha(z_{k}) \p^{k, \gamma}_{i}$ and $\brho^{k, \gamma}_{i} \to \brho^{k}_{i}$ in $L^{2}(\Om; \M^{n}_{D})$ as $\gamma \to + \infty$ for every $i$ and~$k$.

Denoting by $(\overline{u}^{k, \gamma}_{i}, \overline{\strain}^{k,
  \gamma}_{i}, \overline{\p}^{k, \gamma}_{i})_{i=1}^{k+1} \in  \A(0)
^{k+1}$ the adjoint variables  introduced in Corollary~\ref{c.approx-optimality}, we have by Proposition~\ref{p.bound-adjoint} that $(\overline{u}^{k, \gamma}_{i}, \overline{\strain}^{k, \gamma}_{i}, \overline{\p}^{k, \gamma}_{i})$ are bounded in $H^{1}(\Om; \R^{n}) \times L^{2}(\Om; \M^{n}_{S}) \times L^{2}(\Om; \M^{n}_{D})$ uniformly w.r.t.~$i$,~$k$, and~$\gamma$. Thus, we may assume that, up to a subsequence, $(\overline{u}^{k, \gamma}_{i}, \overline{\strain}^{k, \gamma}_{i}, \overline{\p}^{k, \gamma}_{i}) \rightharpoonup (\overline{u}^{k}_{i}, \overline{\strain}^{k}_{i}, \overline{\p}^{k}_{i})$ weakly in $H^{1}(\Om; \R^{n}) \times L^{2}(\Om; \M^{n}_{S}) \times L^{2}(\Om; \M^{n}_{D})$ as $\gamma \to +\infty$ for every~$i$ and~$k$.

In order to prove that $ (\overline{u}^{k}_{i}, \overline{\strain}^{k}_{i}, \overline{\p}^{k}_{i})$ satisfies~\eqref{e.discrete-optimality1}--\eqref{e.discrete-optimality2}, we first rewrite the optimality conditions~\eqref{e.8}--\eqref{e.9} for $\gamma \in (0,+\infty)$ in a form similar to~\eqref{e.discrete-optimality1}. To this aim, we define for $i=1, \ldots, k$
\begin{displaymath}
\bpi^{k, \gamma}_{i} := d(z_{k}^{\gamma}) \nabla^{2}_{\q} h_{\gamma} (\p^{k, \gamma}_{i} - \p^{k, \gamma}_{i-1}) (\overline{\p}^{k, \gamma}_{i} - \overline{\p}^{k, \gamma}_{i+1}) \qquad \text{in $L^{2}(\Om; \M^{n}_{D})$}\,.
\end{displaymath}
Hence, we rewrite~\eqref{e.8} as
\begin{align}\label{e.8.2}
  \int_{\Om} \C ( z_{k}^{\gamma} )  \overline{\strain}^{k, \gamma}_{i} {\, \cdot\,} \eeta \, \di x  & + \int_{\Om} \ha ( z_{k}^{\gamma } ) \overline{\p}^{k, \gamma}_{i} {\, \cdot\,} \qq \, \di x 
+ \int_{\Om} \bpi^{k, \gamma}_{i} {\, \cdot\,} \qq \, \di x  
\\
&
- \int_{\Om} \ell ( z_{k}^{\gamma}) f^{k}_{i} {\, \cdot\,} v \, \di x - \int_{\Gamma_{N}} g^{k}_{i} {\, \cdot\,} v \, \di \HH^{n-1} = 0 \,, \nonumber
\end{align}
for $(v, \eeta, \qq) \in \A(0)$. From~\eqref{e.8.2} tested against $(0, -\qq, \qq)$ with~$\qq \in L^{2}(\Om; \M^{n}_{D})$ we deduce that
\begin{displaymath}
\bpi^{k, \gamma}_{i} = \Pi_{\M^{n}_{D}} \big( \C (z_{k}^{\gamma}) \overline{\strain}^{k, \gamma}_{i} \big) - \ha (z_{k}^{\gamma}) \overline{\p}^{k, \gamma}_{i} \,.
\end{displaymath}
Thus, setting
\begin{displaymath}
\bpi^{k}_{i}:= \Pi_{\M^{n}_{D}} \big( \C (z_{k}) \overline{\strain}^{k}_{i} \big) - \ha (z_{k}) \overline{\p}^{k}_{i} \qquad \text{for $i=1, \ldots, k+1$}
\end{displaymath}
we infer that $\bpi^{k, \gamma}_{i} \rightharpoonup \bpi^{k}_{i}$ weakly in $L^{2}(\Om; \M^{n}_{D})$ as~$\gamma \to +\infty$, for every $k$ and~$i$. Passing to the limit as~$\gamma \to +\infty$ in~\eqref{e.8.2} we deduce~\eqref{e.discrete-optimality1}.

As for~\eqref{e.discrete-optimality2}, we rewrite~\eqref{e.9} as
\begin{align}\label{e.9.2}
& 
 \sum_{j=1}^{k} \bigg( \int_{\Om} \varphi \, \ell'(z_{k}^{\gamma}) ( f^{k}_{j} - f^{k}_{j-1}) {\, \cdot\,} (u^{k, \gamma}_{j} + \overline{u}^{k, \gamma}_{j} )\, \di x 
 - \int_{\Om} \big( \C'(z_{k}^{\gamma}) \varphi \big) ( \strain^{k, \gamma}_{j} - \strain^{k, \gamma}_{j-1}) {\, \cdot\,} \overline{\strain}^{k, \gamma}_{j} \, \di x 
 \\
 &
 \qquad \qquad  - \int_{\Om} \big( \ha'(z_{k}^{\gamma}) \varphi \big)
   ( \p^{k, \gamma}_{j} - \p^{k, \gamma}_{j-1}) {\, \cdot\,}
   \overline{\p}^{k, \gamma}_{j} \, \di x - \int_{\Om} \varphi \,
   \frac{d'(z_{k}^{\gamma})}{d(z_{k}^{\gamma} )} \, ( \brho^{k, \gamma}_{j} - \brho^{k, \gamma}_{j-1} ) {\, \cdot\,} \overline{\p}^{k, \gamma}_{j} \, \di x \bigg)  \nonumber
\\
&
\qquad \qquad  + \int_{\Om} \delta \nabla{z_{k}^{\gamma}}{\, \cdot\,} \nabla{\varphi} + \frac{\varphi}{\delta} ( z_{k}^{\gamma} (1 - z_{k}^{\gamma})^{2} - (z_{k}^{\gamma})^{2} ( 1 - z_{k}^{\gamma} )) \, \di x = 0 \,. \nonumber
\end{align}
Owing to the convergences discussed above, we again infer~\eqref{e.discrete-optimality2} by passing to the limit in~\eqref{e.9.2} as~$\gamma \to +\infty$.

Finally, the proof of~\eqref{e.discrete-optimality3}--\eqref{e.discrete-optimality4} can be obtained by repeating step by step the proof of~\cite[formula~(4.5), Theorem~4.1]{Alm-Ste_20}.
\end{proof}

\begin{remark}\label{r.bound-adjoint}
We notice that $(\overline{u}^{k}_{i}, \overline{\strain}^{k}_{i}, \overline{\p}^{k}_{i})$ is bounded in~$H^{1}(\Om; \R^{n}) \times L^{2}(\Om; \M^{n}_{S}) \times L^{2}(\Om, \M^{n}_{D})$ uniformly w.r.t.~$i$ and~$k$ as a consequence of Proposition~\ref{p.bound-adjoint}.
\end{remark}

%%%%%%%%%%%%%%%%%%%%%%%%%%%%%%%%%%%%%%%%%%%%%%%%%%%%%%%%%%%%%%

\subsection{Optimality of the time-continuous problem}

We conclude with the first-order optimality conditions for the TO problem~\eqref{min2}. Most of the conditions follow directly from those computed in Theorem~\ref{t.discrete-optimality} by passing to the limit as the time step~$\tau_{k}$ tends to~$0$. The only difficulty is to find the time-continuous condition corresponding to~\eqref{e.discrete-optimality4}, since the adjoint variable~$\overline{\p}^{k}_{i}$ can be bounded in~$L^{2}(\Om; \M^{n}_{D})$ uniformly w.r.t.~$i$ and~$k$ (see Remark~\ref{r.bound-adjoint}), but no time regularity is expected.

\begin{theorem}[Optimality for the TO problem]\label{t.continuous-optimality}
Let $z_{k} \in H^{1}(\Om; [0,1])$ be the sequence of solutions of the
time-discrete \emph{TO} problem~\eqref{min3} found in Theorem~\ref{t.discrete-optimality}. Then, there exists $z \in H^{1}(\Om; [0,1])$ solving~\eqref{min2} such that, up to a subsequence, $z_{k} \rightharpoonup z$ weakly in~$H^{1}(\Om)$. Denoting by $(u( \cdot), \strain(\cdot), \p(\cdot))$ the quasistatic evolution corresponding to~$z$, there exists~$\brho \in H^{1}(0,T; L^{2}(\Om; \M^{n}_{D}))$ such that for every $(v, \eeta, \qq) \in \A(0)$ and every $t \in [0,T]$ the following holds:
\begin{align}
& \int_{\Om} \C(z) \strain(t) {\, \cdot\,} \eeta \, \di x + \int_{\Om} \ha(z) \p(t) {\, \cdot\,} \qq \, \di x + \int_{\Om} \brho(t) {\, \cdot\,} \qq \, \di x \label{e.continuous-equilibrium1}
\\
&
\qquad\qquad\qquad\quad\,\,\, - \int_{\Om} \ell(z) f(t) {\, \cdot\,} v \, \di x - \int_{\Gamma_{N}} g(t) {\, \cdot\,} v \, \di x =  0 \,, \nonumber
\\[2mm]
&
\brho(t){\, \cdot\,} \dot{\p}(t) = d(z) | \dot{\p} (t) | \quad \text{in~$\Om$}\,, \qquad |\dot{\p}(t) | = 0 \quad \text{in~$\{ |\brho(t)| < d(z) \}$}\,. \label{e.continuous-equilibrium2}
\end{align}

Furthermore, there exist the adjoint variables $\overline{\p}_{0} \in L^{2}(\Om; \M^{n}_{D})$, $\bpi \in L^{\infty}(0,T; L^{2}(\Om; \M^{n}_{D}))$, and $(\overline{u}, \overline{\strain}, \overline{\p}) \in L^{\infty}(0,T; H^{1}(\Om; \R^{n}) \times L^{2}(\Om; \M^{n}_{S}) \times L^{2}(\Om; \M^{n}_{D}))$,  such that for every~$(v, \eeta, \qq) \in \A(0)$, for every $\varphi \in H^{1}(\Om) \cap L^{\infty}(\Om)$, and for a.e.~$t \in [0,T]$ we have
\begin{align}
&\int_{\Om} \C(z) \overline{\strain}(t) {\, \cdot\,} \eeta \, \di x + \int_{\Om} \ha(z) \overline{\p}(t) {\, \cdot\,} \qq , \di x + \int_{\Om} \bpi(t) {\, \cdot\,} \qq \,\di x  \label{e.continuous-optimality1}
\\
&
\qquad\qquad\qquad \quad \,\,\, - \int_{\Om} \ell(z) f(t) {\, \cdot\,} v \, \di x - \int_{\Gamma_{N}} g(t) {\, \cdot\,} v \, \di \HH^{n-1} =0 \,, \nonumber
\\[2mm]
&
 \int_{0}^{T} \bigg(  \int_{\Om} \varphi \, \ell' ( z ) \dot{f}(t) {\, \cdot\,}  ( u(t) + \overline{u}(t)) \, \di x - \int_{\Om} \big( \C'(z) \varphi \big) \dot{\strain}(t) {\, \cdot\,} \overline{\strain}(t) \, \di x    \label{e.continuous-optimality2}
\\
&
\qquad\qquad    - \int_{\Om} \big( \ha'(z) \varphi\big) \dot{\p}(t) {\, \cdot\,} \overline{\p}(t) \, \di x - \int_{\Om} \varphi \, \frac{d'(z)}{d(z)} \,  \dot{\brho}(t) {\, \cdot\,} \overline{\p}(t) \, \di x \bigg) \di t  \nonumber
\\
&
\qquad \qquad + \int_{\Om} \delta \nabla{z}{\, \cdot\,} \nabla{\varphi} + \frac{\varphi}{\delta} ( z (1 - z)^{2} - z^{2} ( 1 - z )) \, \di x = 0 \,. \nonumber
\\[2mm]
&
\hphantom{-----------}\vphantom{\int}\bpi(t) {\, \cdot\,} \dot{\p}(t) = 0 \quad \text{in~$\Om$}\,, \label{e.continuous-optimality3}
\\[2mm]
&
\int_{\Om} d^{2} (z) \,  \overline{\p}_{0}\, \di x  - 2 \int_{0}^{T} \int_{\Om} ( \brho(t) {\, \cdot\,} \dot{\brho}(t)) \, \overline{\p}(t)  \, \di x \, \di t = 0 \,. \label{e.continuous-optimality4}
\end{align}
\end{theorem}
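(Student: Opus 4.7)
The plan is to pass to the limit as $k\to\infty$ (i.e.\ $\tau_k\to 0$) in the discrete optimality system of Theorem~\ref{t.discrete-optimality}, mimicking the strategy used there to pass from the approximate to the unregularized discrete conditions as $\gamma\to+\infty$. First, by Corollary~\ref{c.1} I extract a (not relabeled) subsequence such that $z_k\rightharpoonup z$ weakly in $H^1(\Om)$ with $z$ solving \eqref{min2}, and by Proposition~\ref{p.3} the piecewise affine interpolants $(u_k,\strain_k,\p_k)$ from \eqref{e.affine-interpolant} converge strongly to $(u,\strain,\p)$ in $H^1(0,T;H^1\times L^2\times L^2)$. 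Combined with the strong convergence of $z_k$ in $L^q(\Om)$ for every $q<+\infty$, this yields the strong convergence of $\brho_k(t):=\Pi_{\M^n_D}(\C(z_k)\strain_k(t))-\ha(z_k)\p_k(t)$ to $\brho(t):=\Pi_{\M^n_D}(\C(z)\strain(t))-\ha(z)\p(t)$ in $H^1(0,T;L^2(\Om;\M^n_D))$. Passing to the limit in the time-interpolated versions of \eqref{e.discrete-equilibrium1}--\eqref{e.discrete-equilibrium2} then produces \eqref{e.continuous-equilibrium1}--\eqref{e.continuous-equilibrium2}.

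For the adjoint system I would define piecewise constant-in-time interpolants $(\overline{u}_k,\overline{\strain}_k,\overline{\p}_k)$ of the discrete adjoints and set $\bpi_k(t):=\Pi_{\M^n_D}(\C(z_k)\overline{\strain}_k(t))-\ha(z_k)\overline{\p}_k(t)$. By Remark~\ref{r.bound-adjoint} these sequences are bounded in $L^\infty(0,T;H^1\times L^2\times L^2)$ and $L^\infty(0,T;L^2)$ respectively, so along a further subsequence they converge weakly-$*$ to the objects $(\overline{u},\overline{\strain},\overline{\p},\bpi)$ appearing in the statement. Passing to the limit in the time-interpolated versions of \eqref{e.discrete-optimality1}--\eqref{e.discrete-optimality2} produces \eqref{e.continuous-optimality1}--\eqref{e.continuous-optimality2}, while \eqref{e.continuous-optimality3} follows by combining \eqref{e.discrete-optimality3} with the strong convergence of $(\p^k_i-\p^k_{i-1})/\tau_k$ to $\dot\p$ in $L^2(0,T;L^2)$ granted by Proposition~\ref{p.3}.

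The main obstacle is deriving \eqref{e.continuous-optimality4} from the pointwise discrete condition \eqref{e.discrete-optimality4}, since the limit $\overline{\p}$ carries no a priori time regularity. The key observation is that
\begin{equation*}
(d^2(z_k)-|\brho^k_i|^2)(\overline{\p}^k_i-\overline{\p}^k_{i+1})=0\quad\text{a.e.\ in }\Om,\ \text{for every }i=1,\dots,k,
\end{equation*}
because the first factor vanishes on the active set $\{|\brho^k_i|=d(z_k)\}$ (via \eqref{e.discrete-equilibrium2} and the feasibility $|\brho^k_i|\leq d(z_k)$) while the second vanishes on its complement by \eqref{e.discrete-optimality4}. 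Summing over $i=1,\dots,k$ and applying Abel summation, using $\overline{\p}^k_{k+1}=0$ together with $\brho^k_0=0$ (a consequence of $\strain^k_0=\p^k_0=0$), one arrives at
\begin{equation*}
0=\int_\Om d^2(z_k)\,\overline{\p}^k_1\,\di x-\sum_{i=1}^k\int_\Om\bigl(|\brho^k_i|^2-|\brho^k_{i-1}|^2\bigr)\overline{\p}^k_i\,\di x.
\end{equation*}
Since $|\brho_k(\cdot)|^2$ is piecewise quadratic in time, the sum in $i$ rewrites as $2\int_0^T\!\!\int_\Om(\brho_k(t){\,\cdot\,}\dot\brho_k(t))\,\overline{\p}_k(t)\,\di x\,\di t$ by a direct computation on each interval $[t^k_{i-1},t^k_i]$. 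Extracting one more subsequence so that $\overline{\p}^k_1\rightharpoonup\overline{\p}_0$ weakly in $L^2(\Om;\M^n_D)$ (possible by Remark~\ref{r.bound-adjoint}) and passing to the limit yields the identity \eqref{e.continuous-optimality4}. The most delicate technical point will be justifying the final passage to the limit in the product $(\brho_k{\,\cdot\,}\dot\brho_k)\overline{\p}_k$, where the strong $H^1(0,T;L^2)$ convergence of $\brho_k$ has to be leveraged together with the improved spatial $L^{\tilde p}$-integrability of Lemma~\ref{l.3} in order to compensate for the merely weak-$*$ convergence of the adjoint~$\overline{\p}_k$.
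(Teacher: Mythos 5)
Your proposal follows essentially the same route as the paper: extract a weakly converging subsequence of $z_k$ via Corollary~\ref{c.1}, use Proposition~\ref{p.3} for the strong convergence of the forward states and of $\brho_k$ in $H^1(0,T;L^2)$, use Remark~\ref{r.bound-adjoint} to extract weak-$*$ limits of the adjoints, pass to the limit in the time-interpolated discrete optimality system (with the paper additionally making this rigorous by a countable dense subset of $\A(0)$ and test functions $\psi(t)(v,\eeta,\qq)$), and handle \eqref{e.continuous-optimality4} by combining the discrete complementarity conditions with Abel summation on a telescoping sum. Your algebraic reformulation of \eqref{e.discrete-optimality4} into an integral in $t$ of $2(\brho_k{\,\cdot\,}\dot\brho_k)\overline{\p}_k$ is correct and matches the paper up to inessential reindexing (you start at $i=1$ and use $\brho^k_0=0$; the paper isolates the $i=1$ term).

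The one step that does not survive as written is your justification of the final limit passage, where you propose to invoke ``the improved spatial $L^{\tilde p}$-integrability of Lemma~\ref{l.3}''. Lemma~\ref{l.3} only yields $\tilde p\in(2,p)$ and this is never enough for the product $(\brho_k{\,\cdot\,}\dot\brho_k)\overline{\p}_k$: with $\brho_k\in L^{\infty}(0,T;L^{\tilde p}(\Om))$, $\dot\brho_k\in L^{2}(0,T;L^{2}(\Om))$ and $\overline{\p}_k$ only weakly-$*$ convergent in $L^{\infty}(0,T;L^{2}(\Om))$, a spatial H\"older inequality would need $1/\tilde p+1/2+1/2\leq 1$, which is impossible. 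The ingredient the paper actually uses, and which you need, is the uniform $L^{\infty}((0,T)\times\Om)$ bound on $\brho^k_i$ (hence on the interpolants): from \eqref{e.discrete-equilibrium2} one has $\brho^k_i\in d(z_k)\,\partial|\cdot|(\p^k_i-\p^k_{i-1})$, so $|\brho^k_i|\leq d(z_k)\leq\beta$ a.e.\ in $\Om$, independently of $i$ and $k$. This gives $\brho_k$ bounded in $L^{\infty}(0,T;L^{\infty}(\Om;\M^n_D))$; together with the strong $H^{1}(0,T;L^2)$ convergence of $\brho_k$ (hence, along a subsequence, a.e.\ convergence) and strong $L^2(0,T;L^2)$ convergence of $\dot\brho_k$, generalized dominated convergence yields $\brho_k{\,\cdot\,}\dot\brho_k\to\brho{\,\cdot\,}\dot\brho$ strongly in $L^1(0,T;L^2(\Om))$, which pairs against the weak-$*$ convergence of $\overline{\p}_k$ in $L^{\infty}(0,T;L^2(\Om))$. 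Replace the appeal to Lemma~\ref{l.3} by this $L^\infty$ bound and the argument closes.
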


\begin{proof}
Conditions~\eqref{e.continuous-equilibrium1}--\eqref{e.continuous-equilibrium2} are a direct consequence of Definition~\ref{d.quasistatic}. In particular, $\brho(t) = \Pi_{\M^{n}_{D}} \big( \C(z) \strain(t) \big) - \ha(z) \p(t)$ and $\brho(t) \in d(z) \partial| \cdot | (\dot{\p}(t))$ for a.e.~$t \in [0,T]$. 
%As a consequence of Proposition~\ref{p.3}, we have that $\brho_{k} \to \brho$ in $H^{1}(0,T; L^{2}(\Om; \M^{n}_{D}))$.

Let us consider the time-discrete quasistatic evolution ~$(u^{k}_{i}, \strain^{k}_{i}, \p^{k}_{i})_{i=0}^{k} \in \big( H^{1}(\Om; \R^{n}) \times L^{2}(\Om; \M^{n}_{S}) \times L^{2}(\Om, \M^{n}_{D}) \big)^{k+1}$ associated with~$z_{k}$ and let $(\overline{u}^{k}_{i}, \overline{\strain}^{k}_{i}, \overline{\p}^{k}_{i})_{i=1}^{k+1} \in \big( H^{1}(\Om; \R^{n}) \times L^{2}(\Om; \M^{n}_{S}) \times L^{2}(\Om, \M^{n}_{D}) \big)^{k+1}$, $(\brho^{k}_{i})_{i=0}^{k}, (\bpi^{k}_{i})_{i=1}^{k+1} \in \big( L^{2}(\Om; \M^{n}_{D})\big)^{k+1}$ be the corresponding adjoint variables introduced in Theorem~\ref{t.discrete-optimality}. We further define the interpolation functions
\begin{align*}
&\overline{u}_{k}(t) := \overline{u}^{k}_{i} \,, \qquad \overline{\strain}_{k}(t) := \overline{\strain}^{k}_{i} \,, \qquad \overline{\p}_{k}(t) := \overline{\p}^{k}_{i} \,,
\\
&
 \tu_{k}(t):= u^{k}_{i}\,,\qquad  \tstrain_{k}(t) := \strain^{k}_{i}\,, \qquad \tp_{k}(t):= \p^{k}_{i}\,,
 \\
 &
  \trho_{k}(t) := \brho^{k}_{i} \,, \qquad \tpi_{k}(t) := \bpi^{k}_{i} \,,
 \\
 &
 \tilde{f}_{k}(t) := f^{k}_{i}\,, \qquad \tilde{g}_{k}(t) := g^{k}_{i}\,, \qquad \tilde{w}_{k}(t) := w^{k}_{i}\,,
 \\
 &\brho_{k}(t) := \brho^{k}_{i-1} + \frac{(t - t^{k}_{i-1})}{\tau_{k}} \, ( \brho^{k}_{i} - \brho^{k}_{i-1})
\end{align*}
for $t \in (t^{k}_{i-1}, t^{k}_{i}]$. We recall that the piecewise affine interpolation functions~$f_{k}$, $g_{k}$, $w_{k}$, $u_{k}$, $\strain_{k}$, and~$\p_{k}$ have been introduced in~\eqref{e.fgw2} and~\eqref{e.affine-interpolant}. As a consequence of Proposition~\ref{p.3}, we have that $(\tilde{u}_{k}, \tstrain_{k}, \tp_{k}) \to (u, \strain, \p)$ in $L^{\infty}(0,T; H^{1}(\Om; \R^{n}) \times L^{2}(\Om; \M^{n}_{S}) \times L^{2}(\Om; \M^{n}_{D}))$ and  $\brho_{k} \to \brho$ in~$H^{1}(0,T; L^{2}(\Om; \M^{n}_{D}))$ (see also Lemma~\ref{l.A1}). By the equilibrium conditions~\eqref{e.discrete-equilibrium1}--\eqref{e.discrete-equilibrium2} we also infer that $\trho_{k}(t) \in d(z_{k}) \partial | \cdot | (\dot{\p}_{k}(t) ) $, which implies that $\trho_{k}, \brho_{k}$ are bounded in~$L^{\infty}(0,T; L^{\infty}(\Om ; \M^{n}_{D}))$. Moreover, by Proposition~\ref{p.bound-adjoint} we have that~$\tpi_{k}$ and~$(\overline{u}_{k}, \overline{\strain}_{k}, \overline{\p}_{k})$ are bounded in~$L^{\infty} ( 0,T ; L^{2} ( \Om ; \M^{n}_{D}) )$ and in $L^{\infty} ( 0,T ; H^{1} ( \Om ; \R^{n} ) \times L^{2} (\Om ;  \M^{n}_{S} ) \times L^{2} ( \Om ; \M^{n}_{D}))$, respectively. Therefore, we may assume that, up to a subsequence, $\tpi \xrightharpoonup{*} \bpi$ weakly$^*$ in $L^{\infty}(0,T; L^{2} ( \Om ; \M^{n}_{D}))$ and ~$(\overline{u}_{k}, \overline{\strain}_{k}, \overline{\p}_{k}) \xrightharpoonup{*} (\overline{u}, \overline{\strain}, \overline{\p})$ weakly$^*$ in $L^{\infty}(0,T; H^{1}(\Om; \R^{n}) \times L^{2}(\Om; \M^{n}_{S}) \times L^{2}(\Om; \M^{n}_{D}))$.

Let us show that~$\bpi$ and~$(\overline{u}, \overline{\strain},
\overline{\p})$
satisfy~\eqref{e.continuous-optimality1}--\eqref{e.continuous-optimality4}. We
start with~\eqref{e.continuous-optimality1}. Let  us  fix an at most
countable and dense subset~$D$ of~$\A(0)$. For every $(v, \eeta, \qq)
\in D$, every $\psi \in C^{\infty}_{c}(0,T)$, and every $t \in[0,T]$,
we consider the test function $(\psi(t) v, \psi(t) \eeta, \psi(t) \p)
\in \A(0)$ and rewrite the optimality
condition~\eqref{e.discrete-optimality1} as
\begin{align}\label{e.42}
 \int_{\Om} \C(z_{k}) \overline{\strain}_{k} (t) {\, \cdot\,} \psi(t) \eeta\, \di x & + \int_{\Om} \ha(z_{k}) \overline{\p}_{k} (t) {\, \cdot\,} \psi(t) \qq \, \di x   + \int_{\Om} \tpi_{k} (t) {\, \cdot\,} \psi(t) \qq \, \di x
\\
&
 - \int_{\Om} \ell(z_{k}) \tilde{f}_{k} (t)  {\, \cdot\,} \psi(t) v \, \di x - \int_{\Gamma_{N}} \tilde{g}_{k} (t) {\, \cdot\, } \psi(t) v \, \di \HH^{n-1} = 0\,. \nonumber
\end{align}
We integrate~\eqref{e.42} over~$[0,T]$ and pass to the limit as~$k \to \infty$. In view of the above convergences, we infer that for every $\psi \in C_{c}^{\infty}(0,T)$
\begin{align}\label{e.43}
\int_{0}^{T} \psi(t) \bigg( \int_{\Om} \C(z) \overline{\strain} (t) {\, \cdot\,}  \eeta\, \di x & + \int_{\Om} \ha(z) \overline{\p} (t) {\, \cdot\,}  \qq \, \di x   + \int_{\Om} \bpi (t) {\, \cdot\,}  \qq \, \di x
\\
&
 - \int_{\Om} \ell(z )f (t)  {\, \cdot\,} v \, \di x - \int_{\Gamma_{N}} g (t) {\, \cdot\, } v \, \di \HH^{n-1} \bigg) \, \di t = 0 \,. \nonumber
\end{align}
Since $D$ is at most coutable, we deduce from~\eqref{e.43} that~\eqref{e.continuous-optimality1} holds for a.e.~$t \in [0,T]$ and for every $(v, \eeta, \qq) \in D$. By density we extend the equality to~$\A(0)$.

Arguing in the same way, we can also prove that~\eqref{e.continuous-optimality3} holds for a.e.~$t \in [0,T]$, as the corresponding time-discrete condition~\eqref{e.discrete-optimality3} holds for every $t \in [0,T]$ and only the time derivative~$\dot{\p}_{k}$ is involved, which converges to~$\dot{\p}$ in $L^{2}(0,T; L^{2} (\Om; \M^{n}_{D}))$.

As for~\eqref{e.continuous-optimality2}, for every $\varphi \in H^{1} ( \Om ) \cap L^{\infty} ( \Om )$ we rewrite~\eqref{e.discrete-optimality2} as
\begin{align}\label{e.44}
& \int_{0}^{T} \Bigg( \int_{\Om} \varphi \, \ell' ( z_{k} )\dot{f}_{k}(t)  {\, \cdot\,} ( \tilde{u}_{k} (t) + \overline{u}_{k}(t))  \, \di x - \int_{\Om} \big( \C'(z_{k}) \varphi \big) \dot{\strain}_{k}(t) {\, \cdot\,} \overline{\strain}_{k}(t) \, \di x 
\\
&
\qquad  \qquad - \int_{\Om} \big( \ha'(z_{k}) \varphi\big) \dot{\p}_{k}(t) {\, \cdot\,} \overline{\p}_{k}(t) \, \di x - \int_{\Om} \varphi \, \frac{ d'(z_{k})}{d(z_{k})} \, \dot{\brho}_{k}(t) {\, \cdot\,} \overline{\p}_{k}(t) \, \di x \bigg) \di t   \nonumber
\\
&
\qquad \qquad  + \int_{\Om} \delta \nabla{z_{k}}{\, \cdot\,} \nabla{\varphi} + \frac{\varphi}{\delta} ( z_{k} (1 - z_{k})^{2} - z_{k}^{2} ( 1 - z_{k} )) \, \di x = 0 \,. \nonumber
\end{align}
Thus, condition~\eqref{e.continuous-optimality2} is obtained by passing to the limit in~\eqref{e.44} as $k\to \infty$ relying on the continuity of~$\ell'$, $d'$, $\C'$, and~$\ha'$, and on the convergences discussed above.

We conclude with~\eqref{e.continuous-optimality4}. First we notice that, thanks to~\eqref{e.discrete-equilibrium2},~\eqref{e.discrete-optimality4} can be equivalently expressed as
\begin{align}\label{e.commento}
\sum_{i=1}^{k} \int_{\Om} ( d^{2}(z_{k}) - | \brho^{k}_{i} |^{2} )  | \overline{\p}^{k}_{i} - \overline{\p}^{k}_{i+1} |\, \di x = 0 \,,
\end{align}
which,  owing to the fact that $|\brho^k_i| \leq d(z^k_i)$, implies
\begin{align}\label{e.equivalent}
\sum_{i=1}^{k} \int_{\Om} ( d^{2}(z_{k}) - | \brho^{k}_{i} |^{2} ) ( \overline{\p}^{k}_{i} - \overline{\p}^{k}_{i+1} )  \, \di x = 0\,.
\end{align}
Recalling that $\overline{\p}^{k}_{k+1}= 0$, we rewrite~\eqref{e.equivalent} as follows:
\begin{align}\label{e.equivalent2}
0 & =  \int_{\Om} (d^{2}(z_{k}) - |\brho^{k}_{1}|^{2} ) \, \overline{\p}^{k}_{1}\, \di x  - \sum_{i=2}^{k} \int_{\Om} (\brho^{k}_{i-1} + \brho^{k}_{i}) {\, \cdot\,} ( \brho^{k}_{i} - \brho^{k}_{i-1})\, \overline{\p}^{k}_{i} \, \di x  
\\
&
=  \int_{\Om} ( d^{2} ( z_{k}) - | \brho^{k}_{1} |^{2} ) \, \overline{\p}^{k}_{1} \, \di x - \int_{ t^{k}_{1}}^{T} \int_{\Om} ( \trho_{k}(t) + \trho_{k}(t - \tau_{k})) {\, \cdot\,} \dot{\brho}_{k}(t) \, \overline{\p}_{k}(t) \, \di x \, \di t \,. \nonumber
\end{align}
Since~$\overline{\p}^{k}_{1}$ is bounded in~$L^{2}(\Om; \M^{n}_{D})$, there exists~$\overline{\p}_{0} \in L^{2}(\Om; \M^{n}_{D})$ such that, up to a subsequence, $\overline{\p}_{1}^{k} \rightharpoonup \overline{\p}_{0}$ weakly in~$L^{2}(\Om; \M^{n}_{D})$. Since $\brho_{k} \to \brho$ in $H^{1}(0,T; L^{2}(\Om; \M^{n}_{D}))$, the function~$\trho_{k}(\cdot - \tau_{k})$ converges to~$\brho$ in $L^{2}(0,T; L^{2}(\Om; \M^{n}_{D}))$ and $\brho^{k}_{1} \to \brho(0) = 0$ in $L^{2}(\Om; \M^{n}_{D})$. As $\overline{\p}_{k} \xrightharpoonup{*} \overline{\p}$ weakly$^*$ in $L^{\infty}(0,T; L^{2}(\Om; \M^{n}_{D}))$ and~$\trho_{k}$ is bounded in $L^{\infty}(0,T; L^{\infty}(\Om; \M^{n}_{D}))$, we can pass to the limit in~\eqref{e.equivalent2} and deduce~\eqref{e.continuous-optimality4}
\end{proof}

%%%%%%%%%%%%%%%%%%%%%%%%%%%%%%%%%%%%%%%%%%%%%%%%%%%%%%%%%%%%%%

\appendix

\section{Proof or Proposition~\ref{p.3}}\label{appendixA}

We start by recalling that by the definition of quasistatic evolution
(see Definition~\ref{d.quasistatic}), there exists $\brho \in
L^{\infty}((0,T)\times\Om; \M^{n}_{D}))$ such that $\brho(t) \in
d(z) \partial | \cdot | (\dot{\p} (t))$ almost everywhere (see, e.g.,~\cite{Han}), where the
symbol~$\partial$ denotes here the subdifferential, and such that
for $t \in [0,T]$ the equilibrium condition~\eqref{e.1} is equivalent to
\begin{align}\label{e.continuous-equilibrium}
\int_{\Om}  \C(z) \strain(t){\, \cdot\,} \eeta \, \di x & + \int_{\Om} \ha(z) \p(t) {\, \cdot\,} \qq \, \di x + \int_{\Om} \brho(t) {\, \cdot\,} \qq \, \di x
\\
& - \int_{\Om} \ell(z) f(t) {\, \cdot\,} v \, \di x - \int_{\Gamma_{N}} g(t) {\, \cdot\,} v \, \di \HH^{n-1} = 0 \nonumber
\end{align}
for every $(v, \eeta, \qq) \in \A(0)$. 

In the next lemma we prove that the piecewise affine functions defined in~\eqref{e.affine-interpolant} converge in $L^{\infty}(0,T ; H^{1}(\Om; \R^{n}) \times L^{2}(\Om; \M^{n}_{S}) \times L^{2}(\Om; \M^{n}_{D}))$ to a quasistatic evolution.

\begin{lemma}\label{l.A1}
Let~$z_{k}, z \in H^{1}(\Om; [0,1])$, $(u_{k}, \strain_{k}, \p_{k})$, and~$(u(\cdot), \strain(\cdot), \p(\cdot))$ be as in Proposition~\ref{p.3}. Then, $(u_{k}, \strain_{k}, \p_{k})$ converges to $(u(\cdot), \strain(\cdot), \p(\cdot))$ in $L^{\infty}(0,T ; H^{1}(\Om; \R^{n}) \times L^{2}(\Om; \M^{n}_{S}) \times L^{2}(\Om; \M^{n}_{D}))$.
\end{lemma}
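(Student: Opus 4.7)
The strategy is to apply the evolutionary $\Gamma$-convergence scheme of~\cite{mrs} in four stages: uniform \emph{a priori} bounds, pointwise-in-time weak convergence with identification of the limit, pointwise-in-time strong convergence via the energy balance, and an equicontinuity argument to upgrade from pointwise to uniform convergence.

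First, from the discrete energy inequality~\eqref{e.4}, the uniform bounds~\eqref{e.C}--\eqref{e.H}, and the regularity of $f,g,w$, one obtains uniform-in-$k$ bounds on $\sup_i \|(u^k_i,\strain^k_i,\p^k_i)\|_{H^1 \times L^2 \times L^2}$ and on the discrete total variation $\sum_{j=1}^k \D(z_k,\p^k_j - \p^k_{j-1})$. These transfer verbatim to the piecewise affine interpolants. By Helly's selection principle applied to $\p_k$ and diagonal extraction in~$t$, a subsequence satisfies $(u_k(t),\strain_k(t),\p_k(t)) \rightharpoonup (\tilde u(t),\tilde\strain(t),\tilde\p(t))$ weakly in $H^1 \times L^2 \times L^2$ for every $t \in [0,T]$. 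Using the strong convergence $z_k \to z$ in every $L^q(\Om)$, $q<\infty$ (Rellich, recalling $z_k \in [0,1]$), and hence the strong convergence of $\C(z_k)$, $\ha(z_k)$, $d(z_k)$, $\ell(z_k)$ in every~$L^q$, I would pass to the limit in the Euler--Lagrange form of the discrete minimality~\eqref{e.3} and in the mutual recovery sequence $(\hat u + u(t) - \tilde u_k(t),\hat\strain + \strain(t) - \tilde\strain_k(t),\hat\p + \p(t) - \tilde\p_k(t))$, exactly as in Proposition~\ref{p.10}, to verify the continuous stability~\eqref{e.1} and energy balance~\eqref{e.2} for the limit. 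Uniqueness of the quasistatic evolution given~$z$ then forces $(\tilde u,\tilde\strain,\tilde\p) = (u,\strain,\p)$ along the whole sequence.

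The energy identity further yields pointwise-in-time convergence of the energies; combined with the weak convergence and the uniform convexity enforced by~\eqref{e.C}--\eqref{e.H}, this gives strong $L^2$ convergence of $\strain_k(t)$ and $\p_k(t)$, and, via the decomposition $\e u_k(t) = \strain_k(t) + \p_k(t)$ together with the boundary data $w_k(t) \to w(t)$ and Korn's inequality, strong $H^1$ convergence of $u_k(t)$. To upgrade from pointwise to uniform convergence I would establish equicontinuity of $(u_k,\strain_k,\p_k)$ in $H^1 \times L^2 \times L^2$ uniformly in~$k$: subtracting~\eqref{e.3} at two consecutive nodes and testing with natural competitors produces a discrete Lipschitz estimate
\[
\|(u^k_i - u^k_{i-1},\strain^k_i - \strain^k_{i-1},\p^k_i - \p^k_{i-1})\|_{H^1 \times L^2 \times L^2} \le C\bigl(\|f^k_i - f^k_{i-1}\|_2 + \|g^k_i - g^k_{i-1}\|_2 + \|w^k_i - w^k_{i-1}\|_{H^1}\bigr),
\]
with $C$ independent of~$k$ thanks to the uniform ellipticity of $\C+\ha$. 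Summation in $i$ provides an $L^2(0,T)$ bound on $(\dot u_k,\dot\strain_k,\dot\p_k)$, hence $\tfrac{1}{2}$-H\"older equicontinuity in time; combined with the pointwise strong convergence already established, a routine Arzel\`a-type argument delivers uniform convergence on $[0,T]$.

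The main obstacle is the uniform-in-$k$ equicontinuity step: while continuous Lipschitz-in-time estimates for elastoplastic evolutions are classical~\cite{Han}, their discrete counterparts must be derived directly from~\eqref{e.3} without recourse to the time-continuous flow rule, and the constant must depend only on the ellipticity bounds (not on the specific $z_k$) — this is precisely where the uniformity in~\eqref{e.C}--\eqref{e.H} is used. A secondary subtlety lies in constructing the mutual recovery sequence when $z_k$ itself varies with~$k$; this is handled by the strong convergence of the material tensors in every~$L^q$ and closely parallels the argument already used in Proposition~\ref{p.10}.
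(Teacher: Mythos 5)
Your proposal is correct but takes a genuinely different route from the paper. After the discrete Lipschitz estimate (which both arguments need), the paper does \emph{not} pass through pointwise-in-time convergence at all: it writes the difference of the discrete and continuous stability conditions, tests with the velocity triple $(\dot u - \dot u_k + \dot w_k - \dot w,\ \dot\strain - \dot\strain_k + \e\dot w_k - \e\dot w,\ \dot\p - \dot\p_k)\in\A(0)$, and uses the monotonicity of the subdifferential of $\qq\mapsto d(\cdot)|\qq|$ — through the observation that $\brho(t)\in d(z)\partial|\cdot|(\dot\p(t))$ and $\trho_k(t)\in d(z_k)\partial|\cdot|(\dot\p_k(t))$ imply $\int_\Om (\tfrac{d(z_k)}{d(z)}\brho - \trho_k)\cdot(\dot\p - \dot\p_k)\ge 0$ — to kill the dissipative coupling. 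Integrating in time then yields a single estimate for $\sup_s\|(\strain-\strain_k,\p-\p_k)(s)\|_{L^2}$ whose right-hand side is a sum of products in which one factor (interpolation error, $\|w_k-w\|_{H^1(0,T)}$, $\|(\C(z_k)-\C(z))\strain\|_{L^2}$, etc.) tends to zero while the other stays bounded; no Gronwall iteration, no compactness in time, and no appeal to~\cite{mrs} is needed. Your route — extract pointwise weak limits, identify them via mutual recovery sequences, upgrade to pointwise strong convergence through the energy identity, then use the $1/2$-H\"older modulus and an Arzel\`a-type argument to make the convergence uniform — is a perfectly viable alternative, and your diagnosis of the key obstacle (uniformity of the discrete Lipschitz constant in $k$ and $z_k$) is exactly right, but it is longer and imports the \cite{mrs} machinery which the paper reserves for the genuinely different situation of Proposition~\ref{p.10} (where $z_m$ varies but the evolutions are already time-continuous). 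The paper's direct error estimate is more self-contained and quantitative; yours is more structural and would generalize more readily if the dissipation were replaced by something for which the direct monotonicity test is unavailable.
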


\begin{proof}
We first show that $(u_{k}, \strain_{k}, \p_{k})$ is bounded in~$H^{1}(0,T; H^{1}(\Om; \R^{n}) \times L^{2}(\Om; \M^{n}_{S}) \times L^{2}(\Om;\M^{n}_{D}))$. We notice that the~$L^{\infty}$-boundedness is a consequence of the energy inequality~\eqref{e.4}.

By the uniform convexity of the functional~$\E_{k} (z_{k},
t^{k}_{i}, \cdot, \cdot, \cdot) + \D( z_{k},  \cdot - \p^{k}_{ i-1})$ and by testing the minimality of~$(u^{k}_{i}, \strain^{k}_{i}, \p^{k}_{i})$ at time $t^{k}_{i}$ with $(u^{k}_{i-1} + w^{k}_{i} - w^{k}_{i-1}, \strain^{k}_{i-1} + \e w^{k}_{i} - \e w^{k}_{i-1}, \p^{k}_{i-1}) \in \A(w^{k}_{i})$ and the minimality of~$(u^{k}_{i-1}, \strain^{k}_{i-1}, \p^{k}_{i-1})$ at time~$t^{k}_{i-1}$ with $(u^{k}_{i} - w^{k}_{i} + w^{k}_{i-1}, \strain^{k}_{i} - \e w^{k}_{i} + \e w^{k}_{i-1} , \p^{k}_{i}) \in \A(w^{k}_{i-1})$, we have that
\begin{align*}
& c \big( \| \strain^{k}_{i} - ( \strain^{k}_{i-1} - \e w^{k}_{i-1} + \e w^{k}_{i}) \|_{2}^{2} + \| \p^{k}_{i} - \p^{k}_{i-1} \|_{2}^{2} \big) 
\\
&
\qquad \leq  \int_{\Om} \C(z_{k}) (\e w^{k}_{i} - \e w^{k}_{i-1}) {\, \cdot\,} (\e w^{k}_{i} - \e w^{k}_{i-1}) \, \di x \nonumber
\\
&
\qquad \quad - \int_{\Om} \C(z_{k}) (\strain^{k}_{i} -  \strain^{k}_{i-1}) {\, \cdot\,} (\e w^{k}_{i} - \e w^{k}_{i-1}) \, \di x \nonumber
\\
&
\qquad \quad + \int_{\Om} \ell(z_{k}) (f^{k}_{i} - f^{k}_{i-1}){\, \cdot\,} (u^{k}_{i} - (u^{k}_{i-1} - w^{k}_{i-1} + w^{k}_{i})) \, \di x \nonumber
\\
&
\qquad \quad + \int_{\Gamma_{N}} ( g^{k}_{i} - g^{k}_{i-1}) {\, \cdot\,} (u^{k}_{i} - (u^{k}_{i-1} - w^{k}_{i-1} + w^{k}_{i}) ) \, \di \HH^{n-1}  \nonumber 
\\
&
\qquad \quad + \D(z_{k} , \p^{k}_{i} - \p^{k}_{i-2}) - \D(z_{k} , \p^{k}_{i-1} - \p^{k}_{i-2}) - \D (z_{k} , \p^{k}_{i} - \p^{k}_{i-1})  \nonumber
\\
&
\qquad \leq  C \big( \| \strain^{k}_{i} - ( \strain^{k}_{i-1} - \e w^{k}_{i-1} + \e w^{k}_{i}) \|_{2} + \| u^{k}_{i} - (u^{k}_{i-1} - w^{k}_{i-1} + w^{k}_{i}) \|_{H^{1}} \big) \big( \| f^{k}_{i} - f^{k}_{i-1} \|_{2} \nonumber 
\\
&
\qquad \quad + \| g^{k}_{i} - g^{k}_{i-1} \|_{2} + \| w^{k}_{i} - w^{k}_{i-1} \|_{H^{1}} \big) \,, \nonumber
\end{align*}
from which we deduce the bound in~$H^{1}(0,T; H^{1}(\Om; \R^{n}) \times L^{2}(\Om; \M^{n}_{S}) \times L^{2}(\Om;\M^{n}_{D}))$. In particular, this implies that $(u_{k} , \strain_{k}, \p_{k})$ converges to $(u, \strain, \p)$ weakly in~$H^{1}(0,T; H^{1}(\Om; \R^{n}) \times L^{2}(\Om; \M^{n}_{S}) \times L^{2}(\Om; \M^{n}_{D}))$, and the derivatives exist a.e.~in~$(0,T)$. 

Defining~$\brho^{k}_{i}:=  \Pi_{\M^{n}_{D}}\big(\C(z_{k}) \strain^{k}_{i} \big) - \ha(z_{k}) \p^{k}_{i}$, we have that the stability condition~\eqref{e.discrete-equilibrium1} holds. Setting
\begin{displaymath}
\brho_{k} (t) := \brho^{k}_{i} + \frac{(t - t^{k}_{i})}{\tau_{k}} \, ( \brho^{k}_{i+1} - \brho^{k}_{i}) \qquad \text{for $t \in [t^{k}_{i}, t^{k}_{i+1})$}
\end{displaymath}
we have that $\brho_{k}$ is bounded in $H^{1}(0,T; L^{2}(\Om;
\M^{n}_{D}))$ as well.

We proceed now by proving the uniform convergence. To this end, we need to introduce the piecewise constant interpolants
\begin{align*}
& \tu_{k}(t):= u^{k}_{i}\,,\qquad  \tstrain_{k}(t) := \strain^{k}_{i}\,, \qquad \tp_{k}(t):= \p^{k}_{i}\,, \qquad \trho_{k}(t) := \brho^{k}_{i},
 \\
 &
 \tilde{f}_{k}(t) := f^{k}_{i}\,, \qquad \tilde{g}_{k}(t) := g^{k}_{i}\,, \qquad \tilde{w}_{k}(t) := w^{k}_{i}
\end{align*}
for $t \in (t^{k}_{i-1}, t^{k}_{i}]$. In particular, $\| \tu_{k} (t) -
u_{k}(t) \|_{H^{1}} \leq \tau_{k} \| \dot{u}_{k}(t) \|_{H^1}$,
and similar inequalities hold for~$\tstrain_{k}$ and~$\tp_{k}$
for the $L^2$-norm. 

For a.e.~$t \in (t^{k}_{i-1}, t^{k}_{i}]$, we test the equilibrium conditions~\eqref{e.discrete-equilibrium1} and \eqref{e.continuous-equilibrium} with the triple
\begin{displaymath}
( \dot{u}(t) - \dot{u}_{k}(t) + \dot{w}_{k}(t) - \dot{w}(t) , \dot{\strain}(t) - \dot{\strain}_{k}(t) + \e \dot{w}_{k}(t) - \e \dot{w}(t) , \dot{\p}(t) - \dot{\p}_{k}(t)) \in \A(0)
\end{displaymath}
and we subtract one from the other, obtaining
\begin{align}\label{e.A1}
& \int_{\Om} \C(z_{k}) \tstrain_{k}(t) {\, \cdot\,} ( \dot{\strain}(t) - \dot{\strain}_{k}(t) + \e \dot{w}_{k}(t) - \e \dot{w}(t))  \, \di x 
\\
&
\qquad -  \int_{\Om} \C(z) \strain (t) {\, \cdot\,} ( \dot{\strain}(t) - \dot{\strain}_{k}(t) + \e \dot{w}_{k}(t) - \e \dot{w}(t))  \, \di x \nonumber
\\
&
\qquad + \int_{\Om} \ha(z_{k}) \tp_{k}(t) {\, \cdot\,} ( \dot{\p}(t) - \dot{\p}_{k}(t) ) \, \di x -  \int_{\Om} \ha(z) \p(t) {\, \cdot\,} ( \dot{\p}(t) - \dot{\p}_{k}(t) ) \, \di x \nonumber
\\
&
\qquad + \int_{\Om} \trho_{k}(t) {\, \cdot\,} (  \dot{\p}(t) - \dot{\p}_{k}(t) ) \, \di x - \int_{\Om} \brho(t) {\, \cdot\,} (  \dot{\p}(t) - \dot{\p}_{k}(t) ) \, \di x \nonumber
\\
&
\qquad - \int_{\Om} \ell(z_{k}) \tilde{f}_{k}(t) {\, \cdot\,} (\dot{u}(t) - \dot{u}_{k}(t) + \dot{w}_{k}(t) - \dot{w}(t)) \, \di x  \nonumber
\\
& 
\qquad + \int_{\Om} \ell(z) f(t) {\, \cdot\,}  (\dot{u}(t) - \dot{u}_{k}(t) + \dot{w}_{k}(t) - \dot{w}(t)) \, \di x  \nonumber
\\
&
\qquad - \int_{\Gamma_{N}} (\tilde{g}_{k}(t) - g(t)) {\, \cdot\,} (\dot{u}(t) - \dot{u}_{k}(t) + \dot{w}_{k}(t) - \dot{w}(t)) \, \di \HH^{n-1} = 0 \,. \nonumber
\end{align}
We notice that, being $\brho(t) \in d(z) \partial | \cdot |
(\dot{\p}(t))$ and $\trho_{k}(t) \in d(z_{k}) \partial | \cdot |
(\dot{\p}_{k}(t))$ almost everywhere in $\Om$, it holds
\begin{equation}\label{e.A2}
\int_{\Om} \bigg( \frac{d(z_{k})}{d(z)} \,  \brho(t) - \trho_{k}(t) \bigg) \cdot ( \dot{\p}(t) - \dot{\p}_{k}(t))  \, \di x \geq 0\,.
\end{equation}
Hence, adding and subtracting in~\eqref{e.A1} the terms
\begin{align*}
& \int_{\Om} \C(z_{k}) ( \strain_{k}(t)  + \strain(t)) {\, \cdot\,} ( \dot{\strain}(t) - \dot{\strain}_{k}(t) + \e \dot{w}_{k}(t) - \e \dot{w}(t)) \, \di x \,,
\\
&
\int_{\Om} \ha(z_{k}) (\p_{k}(t) + \p(t)) {\, \cdot\,} ( \dot{\p}(t) - \dot{\p}_{k}(t))\, \di x \,,
\\
&
\int_{\Om} \frac{d(z_{k})}{d(z)} 	\, \brho(t) {\, \cdot\,} ( \dot{\p}(t) - \dot{\p}_{k}(t) ) \,\di x \,,
\end{align*}
and using~\eqref{e.A2}, we obtain, after a simple algebraic manipulation,
\begin{align}\label{e.A3}
& \int_{\Om} \C(z_{k}) ( \strain(t) - \strain_{k} (t)) {\, \cdot\,} ( \dot{\strain}(t) - \dot{\strain}_{k}(t)) \, \di x + \int_{\Om} \ha(z_{k}) ( \p(t) - \p_{k}(t)) {\, \cdot\,} ( \dot{\p}(t) - \dot{\p}_{k}(t)) \, \di x 
\\
&
\leq \int_{\Om} \C(z_{k}) ( \strain_{k}(t) - \strain ( t) )  {\, \cdot\,} ( \e \dot{w}_{k}(t) - \e \dot{w}(t) ) \, \di x   \nonumber
\\
&
\qquad + \int_{\Om} \C(z_{k} ) ( \tstrain_{k}(t) - \strain_{k}(t)) {\, \cdot\,} ( \dot{\strain}(t) - \dot{\strain}_{k}(t) + \e \dot{w}_{k}(t) - \e \dot{w}(t)) \, \di x \nonumber
\\
&
\qquad + \int_{\Om}  (\C(z_{k}) - \C(z)) \strain(t) {\, \cdot \,} ( \dot{\strain}(t) - \dot{\strain}_{k}(t) + \e \dot{w}_{k}(t) - \e \dot{w}(t)) \, \di x \nonumber
\\
&
\qquad + \int_{\Om} \ha(z_{k} ) ( \tp_{k}(t) - \p_{k}(t)) {\, \cdot\,} ( \dot{\p}(t) - \dot{\p}_{k}(t) ) \, \di x \nonumber
\\
&
\qquad + \int_{\Om}  (\ha(z_{k}) - \ha(z)) \p(t) {\, \cdot \,} ( \dot{\p}(t) - \dot{\p}_{k}(t) ) \, \di x \nonumber
\\
&
\qquad + \int_{\Om} \frac{d(z_{k}) - d(z)}{d(z)} \, \brho(t) {\, \cdot\,} ( \dot{\p}(t) - \dot{\p}_{k}(t)) \, \di x  \nonumber
\\
&
\qquad -  \int_{\Om} ( \ell(z_{k}) \tilde{f}_{k}(t) - \ell(z) f(t)) {\, \cdot\,} (\dot{u}(t) - \dot{u}_{k}(t) + \dot{w}_{k}(t) - \dot{w}(t)) \, \di x \nonumber
\\
&
\qquad - \int_{\Gamma_{N}} ( \tilde{g}_{k}(t) - g(t)) {\, \cdot\,} (\dot{u}(t) - \dot{u}_{k}(t) + \dot{w}_{k}(t) - \dot{w}(t)) \, \di \HH^{n-1} \nonumber\,.
\end{align}
Integrating~\eqref{e.A3} w.r.t.~$t$ on the interval $[0,s]$, for $s \in [0,T]$, recalling~\eqref{e.hp-bound}--\eqref{e.H} and that $u_{k}(0) = u(0) = 0$ and $\strain_{k}(0) = \strain(0) = \p_{k}(0) = \p(0) = 0$, we further estimate
\begin{align}\label{e.A4}
& \frac{\alpha_{\C}}{2} \| \strain(s)  - \strain_{k}(s) \|_{2}^{2}  + \frac{\alpha_{\ha}}{2} \| \p(s)  - \p_{k} (s) \|_{2}^{2}
\\
&
\qquad \leq \beta_{\C} \Big( \| \strain - \strain_{k}\|_{L^{2}(0,T; L^{2}(\Om; \M^{n}_{S}))} + \tau_{k} \| \dot{\strain}_{k} \|_{L^{2}(0,T; L^{2}(\Om; \M^{n}_{S}))}  \nonumber
\\
&
\qquad \qquad +  \| ( \C ( z_{k} ) - \C ( z ) ) \strain \|_{L^{2}(0,T; L^{2}(\Om; \M^{n}_{S}))} \Big) \| w_{k} - w \|_{H^{1}(0,T; H^{1}(\Om; \R^{n}))} \nonumber
\\
&
\qquad \qquad + \Big( \beta_{\C} \tau_{k} \| \dot{\strain}_{k} \|_{L^{2} ( [ 0 , T ] ; L^{2} ( \Om ; \M^{n}_{S} ) ) } \nonumber
\\
&
\qquad \qquad  + \| ( \C ( z_{k} ) - \C ( z ) ) \strain \|_{L^{2}(0,T; L^{2}(\Om; \M^{n}_{S}))} \Big) \| \dot{\strain} - \dot{\strain}_{k}\|_{L^{2}(0,T; L^{2}(\Om; \M^{n}_{S}))} \nonumber
\\
&
\qquad \qquad + \beta_{\ha} \Big( \tau_{k} \| \dot{\p}_{k} \|_{L^{2}(0,T; L^{2}(\Om; \M^{n}_{D}))} + \| (\ha(z_{k}) - \ha(z)) \p \|_{L^{2}(0,T; L^{2}(\Om; \M^{n}_{D}))}  \nonumber
\\
&
\qquad \qquad + \frac{1}{ \alpha } \| (d(z_{k}) - d(z)) \brho \|_{L^{2}(0,T; L^{2}(\Om; \M^{n}_{D}))} \Big) \| \dot{\p} - \dot{\p}_{k}\|_{L^{2}(0,T; L^{2}(\Om; \M^{n}_{D}))} \nonumber
\\
&
\qquad \qquad + C \Big( \| \ell(z_{k})\tilde{f}_{k} - \ell(z) f \|_{L^{2}(0,T; L^{2}(\Om; \R^{n}))} \nonumber
\\
&
\qquad \qquad  + \| \tilde{g}_{k} - g \|_{L^{2}(0,T; L^{2}(\Gamma_{N}; \R^{n}))} \Big) \| u - u_{k} + w_{k} - w \|_{H^{1}(0,T; H^{1}(\Om; \R^{n}))}\,, \nonumber
\end{align}
for some positive constant~$C$ independent of~$k$. Since $(u_{k},
\strain_{k}, \p_{k})$ is bounded in $H^{1}(0,T; H^{1}(\Om; \R^{n})
\times L^{2}(\Om; \M^{n}_{S}) \times L^{2}(\Om; \M^{n}_{D}))$,
$\tilde{f}_{k} \to f$ in $L^{2}(0,T; L^{2}(\Om; \R^{n}))$,
$\tilde{g}_{k} \to g$ in $L^{2}(0,T; L^{2}(\Gamma_{N}; \R^{n}))$, and
$z_{k} \rightharpoonup z$ in $H^{1}(\Om)$ with $0 \leq z_{k}, z \leq
1$ almost everywhere, we deduce from~\eqref{e.A4} that $\strain_{k} \to \strain$ in $L^{\infty}(0,T; L^{2}(\Om; \M^{n}_{S}))$ and $\p_{k} \to \p$ in $L^{\infty}(0,T; L^{2}(\Om; \M^{n}_{D}))$. By Korn's inequality and by the convergence of~$w_{k}$ to $w$ in $H^{1}(0,T; H^{1}(\Om; \R^{n}))$, we infer that $u_{k} \to u$ in $L^{\infty}(0,T; H^{1}(\Om; \R^{n}))$. This concludes the proof of the lemma.
\end{proof}

We are now in a position to conclude the proof of Proposition~\ref{p.3}. We follow here the lines of~\cite[Theorem~3.3]{Wachsmut1}.

\begin{proof}[Proof of Proposition~\ref{p.3}]
In view of Lemma~\ref{l.A1}, it remains to show that $(\dot{u}_{k}, \dot{\strain}_{k}, \dot{\p}_{k})$ converges to~$(\dot{u}, \dot{\strain}, \dot{\p})$ in $L^{2}(0,T; H^{1}(\Om; \R^{n}) \times L^{2}(\Om; \M^{n}_{S}) \times L^{2}(\Om; \M^{n}_{D}))$. To this end, we define the auxiliary triples
\begin{align}
&( \om^{k}_{i}, \bxi^{k}_{i}, \btheta^{k}_{i}) = \argmin \bigg\{  \frac{1}{2} \int_{\Om} \C(z_{k}) \Big( \strain + \frac{\e w^{k}_{i} - \e w^{k}_{i-1}}{\tau_{k}} \Big)  {\, \cdot\,} \Big( \strain + \frac{\e w^{k}_{i} - \e w^{k}_{i-1}}{\tau_{k}} \Big) \, \di x  \label{e.A5}
\\
&
\qquad \qquad\qquad \qquad  \qquad + \frac12 \int_{\om} \ha(z_{k}) \p {\, \cdot\,} \p \, \di x - \int_{\Om} \ell(z_{k}) \frac{f^{k}_{i} - f^{k}_{i-1}}{\tau_{k}} {\, \cdot\,} u \, \di x \nonumber
\\
&
\qquad\qquad\qquad \qquad  \qquad - \int_{\Gamma_{N}} \frac{g^{k}_{i} - g^{k}_{i-1}}{\tau_{k}} {\, \cdot\,} u\, \di \HH^{n-1} : 
(u, \strain, \p) \in \A (0 ) \bigg\} \nonumber
\\[2mm]
&
(\om(t), \bxi(t), \btheta(t)) = \argmin \bigg\{  \frac{1}{2} \int_{\Om} \C(z) ( \strain + \e \dot{w}(t) ) {\, \cdot\,} ( \strain + \e \dot{w}(t) )  \, \di x + \frac12 \int_{\om} \ha(z) \p {\, \cdot\,} \p \, \di x  \label{e.A6}
\\
&
\qquad  \qquad \qquad \qquad \qquad \qquad - \int_{\Om} \ell(z) \dot{f}(t) {\, \cdot\,} u \, \di x - \int_{\Gamma_{N}} \dot{g}(t) {\, \cdot\,} u\, \di \HH^{n-1} : (u, \strain, \p) \in \A ( 0) \bigg\} \,. \nonumber
\end{align}
Since $(f_{k}, g_{k}, w_{k})$ converges to $(f, g, w)$ in $H^{1}(0,T; H^{1}(\Om; \R^{n}) \times L^{2}(\Gamma_{N}; \R^{n}) \times H^{1}(\Om; \R^{n}))$, we deduce that the piecewise constant function
\begin{displaymath}
(\om_{k}(t) , \bxi_{k}(t), \btheta_{k}(t)) := ( \om^{k}_{i} , \bxi^{k}_{i}, \btheta^{k}_{i}) \qquad \text{for $t \in (t^{k}_{i-1}, t^{k}_{i}]$}
\end{displaymath}
converges to $(\om, \bxi, \btheta)$ in $L^{2}(0,T; H^{1} (\Om; \R^{n}) \times L^{2}(\Om; \M^{n}_{S})\times L^{2}(\Om; \M^{n}_{D}))$.

By the minimality of~$(u^{k}_{i}, \strain^{k}_{i}, \p^{k}_{i})$ in~\eqref{e.3}, we have that
\begin{align}\label{e.A15}
\int_{\Om} \C(z_{k}) & \strain^{k}_{i} {\, \cdot\,} \eeta \, \di x + \int_{\Om} \ha(z_{k}) \p^{k}_{i} {\, \cdot\,} \qq \, \di x + \int_{\Om} d(z_{k}) | \qq - ( \p^{k}_{i-1} - \p^{k}_{i}) | \, \di x 
\\
&
- \int_{\Om} d(z_{k}) | \p^{k}_{i} - \p^{k}_{i-1} | \, \di x - \int_{\Om} \ell(z_{k}) f^{k}_{i} {\, \cdot\,} v \, \di x - \int_{\Gamma_{N}} g^{k}_{i} {\, \cdot\,} v \, \di \HH^{n-1} \geq 0 \nonumber
\end{align} 
for every $(v, \eeta, \qq) \in \A(0)$. Testing~\eqref{e.A15} with the triple
\begin{displaymath}
( u^{k}_{i-1} - u^{k}_{i} - w^{k}_{i-1} + w^{k}_{i}, \strain^{k}_{i-1} - \strain^{k}_{i} - \e w^{k}_{i-1} + \e w^{k}_{i} , \p^{k}_{i-1} - \p^{k}_{i}) \in \A(0)
\end{displaymath}
combined with the equilibrium condition (at time $t^{k}_{i-1}$)
\begin{align*}
- \int_{\Om} d(z_{k}) | \p^{k}_{i} - \p^{k}_{i-1} | \, \di x \leq &  \int_{\Om} \C(z_{k}) \strain^{k}_{i-1} {\, \cdot \,} ( \strain^{k}_{i} - \strain^{k}_{i-1} - \e w^{k}_{i} + \e w^{k}_{i-1}) \, \di x 
\\
&
+ \int_{\Om} \ha(z_{k}) \p^{k}_{i-1} {\, \cdot\,} ( \p^{k}_{i} - \p^{k}_{i-1}) \, \di x 
\\
&
- \int_{\Om} \ell(z_{k}) f^{k}_{i-1} {\, \cdot\,} (u^{k}_{i} - u^{k}_{i-1} - w^{k}_{i} + w^{k}_{i-1}) \, \di x 
\\
&
- \int_{\Gamma_{N}} g^{k}_{i-1} {\, \cdot\,}   (u^{k}_{i} - u^{k}_{i-1} - w^{k}_{i} + w^{k}_{i-1}) \, \di \HH^{n-1}\,,
\end{align*}
we deduce that
\begin{align} \label{e.A7}
\int_{\Om}  \C(z_{k})  & \dot{\strain}_{k}(t)  {\, \cdot\,} ( \dot{\strain}_{k}(t) -  \e \dot{w}_{k}(t) ) \, \di x +  \int_{\Om} \ha(z_{k})  \dot{\p}_{k} (t) {\, \cdot\,}  \dot{\p}_{k}(t) \, \di x 
\\
&
\quad - \int_{\Om} \ell(z_{k}) \dot{f}_{k}(t) {\, \cdot\,} ( \dot{u}_{k}(t) - \dot{w}_{k}(t) ) \, \di x - \int_{\Gamma_{N}} \dot{g}_{k}(t) {\, \cdot\,} ( \dot{u}_{k}(t) - \dot{w}_{k}(t)) \, \di \HH^{n-1} \leq 0 \,. \nonumber
\end{align}
Testing the Euler-Lagrange equation of~\eqref{e.A5} with the test $(\dot{u}_{k}(t) - \dot{w}_{k}(t), \dot{\strain}_{k} (t) - \e \dot{w}_{k}(t), \dot{\p}_{k}(t)) \in \A(0)$ we also get
\begin{align}\label{e.A8}
\int_{\Om} \C(z_{k}) & ( \bxi_{k}(t) + \e \dot{w}_{k}(t) ) {\, \cdot\,} (\dot{\strain}_{k}(t) -  \e \dot{w}_{k}(t) )\, \di x + \int_{\Om} \ha(z_{k}) \btheta_{k}(t){\, \cdot\,} \dot{\p}_{k}(t) \, \di x 
\\
&
- \int_{\Om} \ell(z_{k}) \dot{f}_{k}(t) {\, \cdot\,} ( \dot{u}_{k}(t) - \dot{w}_{k}(t) ) \, \di x - \int_{\Gamma_{N}} \dot{g}_{k}(t) {\, \cdot\,} ( \dot{u}_{k}(t) - \dot{w}_{k}(t) ) \, \di \HH^{n-1} = 0\,. \nonumber
\end{align}
We subtract~\eqref{e.A8} from~\eqref{e.A7} and obtain the inequality
\begin{align*}
\int_{\Om} &  \C(z_{k}) \big ( ( \dot{\strain}_{k}(t) - \e
             \dot{w}_{k}(t))  - \bxi_{k}(t) \big) {\,
             \cdot\,}(\dot{\strain}_{k}(t) -  \e \dot{w}_{k}(t) )\,
             \di x \\
&\quad+ \int_{\Om} \ha(z_{k}) ( \dot{\p}_{k}(t) - \btheta_{k}(t)) {\, \cdot\,} \dot{\p}_{k}(t) \, \di x \leq 0\,,
\end{align*}
which in turn implies
\begin{align}\label{e.A9}
\int_{\Om}   \C(z_{k}) \big ( \bxi_{k}(t) & - 2 ( \dot{\strain}_{k}(t) - \e \dot{w}_{k}(t)) \big) {\,\cdot\,} \big( \bxi_{k}(t) -  2 ( \dot{\strain}_{k}(t) - \e \dot{w}_{k}(t)) \big) \, \di x 
\\
&
+ \int_{\Om} \ha(z_{k}) ( \btheta_{k}(t) - 2\dot{\p}_{k}(t)){\, \cdot\,}  ( \btheta_{k}(t) - 2\dot{\p}_{k}(t)) \, \di x  \nonumber
\\
&
 \qquad \leq \int_{\Om} \C(z_{k})  \bxi_{k}(t) {\, \cdot\,}  \bxi_{k}(t) \, \di x + \int_{\Om} \ha(z_{k}) \btheta_{k}(t) {\, \cdot\,} \btheta_{k}(t) \, \di x \,. \nonumber
\end{align}

By the equilibrium condition~\eqref{e.1} of $(u(t), \strain(t), \p(t))$ and by the energy balance~\eqref{e.2}, we have that for a.e.~$t \in [0,T]$
\begin{align}\label{e.A10}
\int_{\Om} \C(z) &  \strain(t) {\, \cdot\,} ( \dot{\strain}(t) - \e \dot{w}(t) ) \, \di x + \int_{\Om} \ha(z) \p(t) {\, \cdot\,}  \dot{\p}(t) \, \di x + \int_{\Om} d(z) \brho(t) {\, \cdot\,} \dot{\p}(t) \, \di x 
\\
&
- \int_{\Om} \ell(z) f(t) {\, \cdot\,} ( \dot{u}(t) - \dot{w}(t)) \, \di x - \int_{\Gamma_{N}} g(t) {\, \cdot\,} ( \dot{u}(t) - \dot{w}(t)) \, \di x =  0\,. \nonumber
\end{align}
Since $\brho(t) \cdot \dot{\p}(t) = d(z) | \dot{\p}(t)|$ almost
everywhere in~$\Om$ and, by the equilibrium~\eqref{e.1} at time~$t+h$, 
\begin{align*}
 \int_{\Om} d(z) | \dot{\p}(t) | \, \di x \geq&  \int_{\Om} \C(z) \strain(t+h){\, \cdot\,} ( \e \dot{w}(t) - \dot{\strain}(t) ) \, \di x - \int_{\Om} \ha(z) \p(t+h) {\, \cdot\,}  \dot{\p}(t) \, \di x
\\
&
+ \int_{\Om} \ell(z) f(t+h) {\, \cdot\,} ( \dot{u}(t) - \dot{w}(t)) \, \di x + \int_{\Gamma_{N}} g(t+h) {\, \cdot\,} ( \dot{u}(t) - \dot{w}(t)) \, \di x \,,
\end{align*}
we infer from~\eqref{e.A10} that for $h \in \R\setminus\{0\}$,
\begin{align}\label{e.A11}
 \int_{\Om}   \C(z)  ( \strain(t+h) &  -  \strain(t) ){\, \cdot\,} ( \dot{\strain}(t) - \e \dot{w}(t) ) \, \di x + \int_{\Om} \ha(z) ( \p(t+h) - \p(t) ) {\, \cdot\,}  \dot{\p}(t) \, \di x
\\
&
- \int_{\Om} \ell(z) ( f(t+h) - f(t) )  {\, \cdot\,} ( \dot{u}(t) - \dot{w}(t)) \, \di x  \nonumber 
\\
&
- \int_{\Gamma_{N}} ( g(t+h) - g(t) )  {\, \cdot\,} ( \dot{u}(t) - \dot{w}(t)) \, \di x \geq  0\,. \nonumber
\end{align}
Dividing~\eqref{e.A11} by $h$ (positive or negative) and passing to the limit as~$h \to 0$, we deduce that for a.e.~$t \in [0,T]$ there holds
\begin{align}\label{e.A12}
 \int_{\Om}   \C(z)  \dot{\strain}(t) & {\, \cdot\,} ( \dot{\strain}(t) - \e \dot{w}(t) ) \, \di x + \int_{\Om} \ha(z) \dot \p(t) {\, \cdot\,}  \dot{\p}(t) \, \di x
\\
&
- \int_{\Om} \ell(z)  \dot{f}(t)  {\, \cdot\,} ( \dot{u}(t) - \dot{w}(t)) \, \di x 
- \int_{\Gamma_{N}}  \dot{g}(t)   {\, \cdot\,} ( \dot{u}(t) - \dot{w}(t)) \, \di x =  0\,. \nonumber
\end{align}
Testing the Euler-Lagrange equation relative to~\eqref{e.A6} with the triple $(\dot{u}(t) - \dot{w}(t), \dot{\strain}(t) - \e \dot{w}(t) , \dot{\p}(t)) \in \A(0)$ we get
\begin{align}\label{e.A13}
\int_{\Om} \C(z) &  (  \bxi(t) + \e \dot{w}(t) )   {\, \cdot\,} (\dot{\strain}(t)  - \e \dot{w}(t)) \, \di x + \int_{\Om} \ha(z) \btheta(t) {\, \cdot\,} \dot{\p}(t) \, \di x 
\\
&
= \int_{\Om} \ell(z) \dot{f}(t) {\, \cdot\,} ( \dot{u}(t) - \dot{w}(t)) \, \di x + \int_{\Gamma_{N}} \dot{g}(t) {\, \cdot\,} ( \dot{u}(t) - \dot{w}(t)) \, \di \HH^{n-1}\,. \nonumber
\end{align}
Subtracting~\eqref{e.A13} from \eqref{e.A12} and arguing as in~\eqref{e.A9} we finally obtain that
\begin{align}\label{e.A14}
 \int_{\Om}    \C(z) &  \big( \bxi(t)  - 2 (  \dot{\strain}(t)) - \e \dot{w}(t) ) \big)  {\,\cdot\,} ( \bxi(t) - 2 ( \dot{\strain}(t)) - \e \dot{w}(t) ) \big)  \, \di x 
\\
&
 + \int_{\Om} \ha(z) ( \btheta(t) - 2\dot{\p}(t)){\, \cdot\,}  ( \btheta(t) - 2\dot{\p}(t)) \, \di x   \nonumber
\\
&
 \qquad = \int_{\Om} \C(z)  \bxi(t) {\, \cdot\,}  \bxi(t) \, \di x + \int_{\Om} \ha(z) \btheta(t) {\, \cdot\,} \btheta(t) \, \di x \,. \nonumber
\end{align}

Let us now set 
\begin{align*}
& r_{k}(t) := \int_{\Om} \C(z_{k})  \bxi_{k}(t) {\, \cdot\,}  \bxi_{k}(t) \, \di x + \int_{\Om} \ha(z_{k}) \btheta_{k}(t) {\, \cdot\,} \btheta_{k}(t) \, \di x \,,
\\
&
r(t) : = \int_{\Om} \C(z)  \bxi(t) {\, \cdot\,}  \bxi(t) \, \di x + \int_{\Om} \ha(z) \btheta(t) {\, \cdot\,} \btheta(t) \, \di x \,.
\end{align*}
By the convergence of $$(\om_{k}, \bxi_{k}, \btheta_{k}) \to ( \om,
\bxi, \btheta)\ \ \text{in} \ \ L^{2}(0,T; H^{1}(\Om; \R^{n}) \times L^{2}(\Om; \M^{n}_{S}) \times L^{2}(\Om; \M^{n}_{D}))$$ we have that $r_{k} \to r$ in $L^{1}(0,T)$. In view of~\eqref{e.A9} and~\eqref{e.A14} we estimate
\begin{align}\label{e.A19}
& \limsup_{k\to\infty} \bigg( \int_{0}^{T}  \int_{\Om} \Big( \C( z_{k} ) \big( \bxi_{k}(t) - 2( \dot{\strain}_{k} (t) - \e \dot{w}_{k} (t)) \big) - \C(z) \big( \bxi(t) 
\\
&
\quad - 2( \dot{\strain}(t) - \e \dot{w}(t)) \big) \Big) \cdot
\big( \bxi_{k}(t) - 2( \dot{\strain}_{k}(t) - \e \dot{w}_{k}(t)) - \bxi(t) + 2 ( \dot{\strain}(t) - \e \dot{w}(t)) \big) \, \di x \, \di t \nonumber
\\
&
\quad + \int_{0}^{T} \int_{\Om} \Big(\ha(z_{k}) ( \btheta_{k}(t) - 2 \dot{\p}_{k}(t) ) - \ha(z) ( \btheta(t) - 2 \dot{\p}(t) )\Big)  \cdot \nonumber
\\
&
\quad \big( \btheta_{k}(t) - 2 \dot{\p}_{k}(t) - \btheta (t) + 2  \dot{\p}(t)  \big) \, \di x \, \di t \bigg) \nonumber
\\
&
= \limsup_{k\to\infty} \bigg(  \int_{0}^{T} \int_{\Om}  \C( z_{k} )
  \big( \bxi_{k}(t) - 2( \dot{\strain}_{k} (t) - \e \dot{w}_{k} (t))
  \big){\, \cdot\,}  \nonumber
\\
&
\quad {\, \cdot\,} \big( \bxi_{k}(t) - 2( \dot{\strain}_{k}(t) - \e \dot{w}_{k}(t)) \big) \, \di x \, \di t  + \int_{0}^{T} \int_{\Om} \ha(z_{k}) ( \btheta_{k}(t) - 2 \dot{\p}_{k}(t) ) {\, \cdot\,} ( \btheta_{k}(t) - 2 \dot{\p}_{k}(t) ) \, \di x \, \di t \bigg)  \nonumber
\\
&
\quad -  \int_{0}^{T} \int_{\Om}  \C( z ) \big( \bxi(t) - 2( \dot{\strain} (t) - \e \dot{w} (t)) \big){\, \cdot\,} \big( \bxi (t) - 2( \dot{\strain} (t) - \e \dot{w} (t)) \big) \, \di x \, \di t \nonumber
\\
&
\quad - \int_{0}^{T} \int_{\Om} \ha(z) ( \btheta (t) - 2 \dot{\p} (t) ) {\, \cdot\,} ( \btheta (t) - 2 \dot{\p} (t) ) \, \di x \, \di t \nonumber
\\
&
\leq \limsup_{k \to \infty} \int_{0}^{T} (r_{k}(t) - r(t) ) \, \di t = 0\,. \nonumber
\end{align}
Using~\eqref{e.A19}, we further estimate
\begin{align}\label{e.A20}
& \limsup_{k\to \infty} \bigg( \int_{0}^{T} \int_{\Om} \C(z_{k}) \big( \bxi_{k}(t)  - 2 ( \dot{\strain}_{k}(t) - \e \dot{w}_{k}(t)) - \bxi(t) + 2 ( \dot{\strain}(t) - \e \dot{w}(t) ) \big) {\,\cdot\,}
\\
&
\quad {\,\cdot\,} \big( \bxi_{k}(t) -  2 ( \dot{\strain}_{k}(t) - \e \dot{w}_{k}(t))  - \bxi(t) + 2 ( \dot{\strain}(t) - \e \dot{w}(t) ) \big) \, \di x \, \di t  \nonumber
\\
&
\quad + \int_{0}^{T} \int_{\Om} \ha(z_{k}) ( \btheta_{k}(t) - 2 \dot{\p}_{k}(t) - \btheta(t) + 2 \dot{\p} (t)) {\, \cdot\,} ( \btheta_{k}(t) - 2 \dot{\p}_{k}(t) \nonumber
\\
&
\quad \vphantom{\int}  - \btheta(t) + 2 \dot{\p} (t)) \, \di x \, \di t  \bigg) \nonumber
\\
&
= \limsup_{k \to \infty} \bigg( \int_{0}^{T} \int_{\Om} \C(z_{k}) \big( \bxi_{k}(t)  - 2 ( \dot{\strain}_{k}(t) - \e \dot{w}_{k}(t)) \big) {\, \cdot\,} \big( \bxi_{k}(t)  - 2 ( \dot{\strain}_{k}(t) - \e \dot{w}_{k}(t)) \nonumber
\\
&
\quad \vphantom{\int} - \bxi(t) + 2 ( \dot{\strain}(t) - \e \dot{w}(t) ) \big)\, \di x \, \di t  \nonumber
\\
&
\quad - \int_{0}^{T}\int_{\Om} \C(z) \big( \bxi (t)  - 2 ( \dot{\strain} (t) - \e \dot{w} (t)) \big) {\, \cdot\,} \big( \bxi_{k}(t)  - 2 ( \dot{\strain}_{k}(t) - \e \dot{w}_{k}(t)) \nonumber
\\
&
\quad \vphantom{\int} - \bxi(t) + 2 ( \dot{\strain}(t) - \e \dot{w}(t) ) \big)\, \di x \, \di t \nonumber
\\
&
\quad + \int_{0}^{T}\int_{\Om} \big( \C(z) - \C(z_{k}) \big) \big( \bxi (t)  - 2 ( \dot{\strain} (t) - \e \dot{w} (t)) \big) {\, \cdot\,} \big( \bxi_{k}(t)  - 2 ( \dot{\strain}_{k}(t) - \e \dot{w}_{k}(t)) \nonumber
\\
&
\quad \vphantom{\int} - \bxi(t) + 2 ( \dot{\strain}(t) - \e \dot{w}(t) ) \big)\, \di x \, \di t  \nonumber
\\
&
\quad + \int_{0}^{T}\int_{\Om} \ha(z_{k}) ( \btheta_{k}(t) - 2\dot{\p}_{k}(t)) {\, \cdot\,} ( \btheta_{k}(t) - 2\dot{\p}_{k}(t) - \btheta(t) + 2\dot{\p}(t)) \, \di x  \nonumber
\\
&
\quad - \int_{0}^{T}\int_{\Om} \ha(z) ( \btheta(t) - 2\dot{\p}(t)) {\, \cdot\,} ( \btheta_{k}(t) - 2\dot{\p}_{k}(t) - \btheta(t) + 2\dot{\p}(t)) \, \di x \, \di t  \nonumber
\\
&
\quad +\int_{0}^{T} \int_{\Om} \big( \ha(z) - \ha(z_{k}) \big)   ( \btheta(t) - 2\dot{\p}(t)) {\, \cdot\,} ( \btheta_{k}(t) - 2\dot{\p}_{k}(t) - \btheta(t) + 2\dot{\p}(t)) \, \di x \, \di t \bigg) \nonumber
\\
&
\leq \limsup_{k \to \infty} \bigg(  \int_{0}^{T}\int_{\Om} \big( \C(z) - \C(z_{k}) \big) \big( \bxi (t)  - 2 ( \dot{\strain} (t) - \e \dot{w} (t)) \big) {\, \cdot\,} \big( \bxi_{k}(t)  \nonumber
\\
&
\quad \vphantom{\int} - 2 ( \dot{\strain}_{k}(t) - \e \dot{w}_{k}(t)) \big)  \, \di x \, \di t \nonumber
\\
&
\quad + \int_{0}^{T} \int_{\Om} \big( \ha(z) - \ha(z_{k}) \big)   ( \btheta(t) - 2\dot{\p}(t)) {\, \cdot\,} ( \btheta_{k}(t) - 2\dot{\p}_{k}(t) - \btheta(t) + 2\dot{\p}(t)) \, \di x \, \di t \bigg) = 0\,.\nonumber
\end{align}
From~\eqref{e.C}--\eqref{e.H} and~\eqref{e.A20} we infer that
\begin{align*}
& \bxi_{k} - 2 ( \dot{\strain}_{k} - \e \dot{w}_{k})  \to  \bxi - 2(  \dot{\strain} - \e \dot{w})  \qquad \text{and} \qquad  \btheta_{k} - 2 \dot{\p}_{k}  \to  \btheta - 2 \dot{\p}
\end{align*}
in $L^{2}(0,T;  L^{2}(\Om; \M^{n}))$. Since $\bxi_{k} \to \bxi$ and $\btheta_{k} \to \btheta$ in $L^{2}(0,T; L^{2}(\Om; \M^{n}))$, we immediately deduce that $\dot{\strain}_{k} \to \dot{\strain}$ and $\dot{\p}_{k} \to \dot{\p}$ in $L^{2}(0,T; L^{2}(\Om; \M^{n}))$. Finally, the convergence of~$\dot{u}_{k}$ to $\dot{u}$ in $L^{2}(0,T; H^{1}(\Om; \R^{n}))$ is a consequence of the convergences of~$\dot{\strain}_{k}$, $\dot{\p}_{k}$, and~$\dot{w}_{k}$, and of Korn's inequality. This concludes the proof of Proposition~\ref{p.3}.
\end{proof}

%%%%%%%%%%%%%%%%%%%%%%%%%%%%%%%%%%%%%%%%%%%%%%%%%%
\section*{ Acknowledgements}
 This work is partially supported 
 by the Austian Science Fund (FWF) projects F\,65, W\,1245, I\,4354, I\,5149, and P\,32788 and by the OeAD-WTZ project CZ 01/2021.

\bibliographystyle{siam}

\end{document}